\newcommand{\dk}{\, \diff k}
\newcommand{\dx}{\, \diff x}
\newcommand{\dy}{\, \diff y}
\newcommand{\Schwartz}{\mathcal{S}}
\newcommand{\R}{\mathbb{R}}
\newcommand{\Z}{\mathbb{Z}}
\newcommand{\N}{\mathbb{N}}
\newcommand{\I}{\mathrm{i}}
\newcommand{\bigO}{\mathcal{O}}
\newcommand{\diff}{\mathrm{d}}
\newcommand{\Diff}{\mathrm{D}}
\newcommand{\supp}{\mathop{\mathrm{supp}}}
\newcommand{\proof}{{\bf Proof }}
\newcommand{\qed}{\hfill$\Box$\smallskip}
\newcommand{\tfrac}[2]{{\textstyle\frac{#1}{#2}}}
\newcommand{\afl}{\fl\quad}
\newtheorem{theorem}{Theorem}[section]
\newtheorem{lemma}[theorem]{Lemma}
\newtheorem{corollary}[theorem]{Corollary}
\newtheorem{proposition}[theorem]{Proposition}
\newtheorem{remark}[theorem]{Remark}
\begin{document}

\title[Solitary waves for the FDKP equation]{Small-amplitude fully localised solitary waves for the full-dispersion Kadomtsev--Petviashvili equation}
\author{Mats Ehrnstr\"om}
\address{Department of Mathematical Sciences, Norwegian University of Science and Technology, 7491 Trondheim, Norway}
{ \ead{mats.ehrnstrom@ntnu.no}}
\author{Mark D. Groves}
\address{Fachrichtung 6.1 - Mathematik, Universit\"at des Saarlandes, Postfach 151150, 66041 Saarbr\"ucken, Germany}
\address{Department of Mathematical Sciences, Loughborough University, Loughborough, LE11 3TU, UK}
\ead{groves@math.uni-sb.de}

\begin{abstract}
The KP-I equation
\[
(u_t-2uu_x+\tfrac{1}{2}(\beta-\tfrac{1}{3})u_{xxx})_x -u_{yy}=0
\]
arises as a weakly nonlinear model equation for gravity-capillary waves with strong surface tension (Bond number
$\beta>1/3$). This equation admits --- as an explicit solution --- a `fully localised' or `lump' solitary wave which decays to zero in all spatial directions. Recently there has been interest in the \emph{ {full-dispersion} KP-I equation}
\[u_t + m(\Diff) u_x + 2 u u_x  = 0,\]
where $m(\Diff)$ is the Fourier multiplier with symbol
\[
m(k) = \left( 1 + \beta  |k|^2|\right)^{\frac{1}{2}} \left( \frac{\tanh  |k|}{|k|} \right)^{\frac{1}{2}} \left( 1 + \frac{2k_2^2}{k_1^2} \right)^{\frac{1}{2}},
\]
which is obtained by retaining the exact dispersion relation from the water-wave problem.
In this paper we show that the FDKP-I equation also has a fully localised solitary-wave solution.
The existence theory is variational and perturbative in nature. 
A variational principle for fully localised solitary waves
is reduced to a locally equivalent variational principle featuring
a perturbation of the variational functional associated with fully localised solitary-wave
solutions of the {KP-I} equation. A nontrivial critical point of the reduced functional is found
by minimising it over its natural constraint set.

\end{abstract}
\ams{35Q53, 35A15, 76B15}
%\submitto{\NL}
\maketitle

\section{Introduction}

There has recently been considerable interest in `full dispersion' versions of model equations
obtained by modifying their dispersive terms so that their dispersion relation coincides with that of
the original physical problem. {The method has been used for some time in engineering and oceanography, but has become more attractive to mathematicians interested in nonlocal equations in view of improved use of harmonic analysis in partial differential equations. The prototypical example is the full-dispersion equation derived by Whitham \cite{Whitham67} as an alternative to the celebrated { Korteweg--de Vries} equation for water waves by incorporating the same linear dispersion relation as the full two-dimensional water-wave problem. It was shown by Ehrnstr\"{o}m, Groves \& Wahl\'{e}n \cite{EhrnstroemGrovesWahlen12} that the Whitham equation admits small-amplitude solitary-wave solutions which are approximated by scalings of the Korteweg--de Vries solitary wave; these waves are symmetric and of exponential decay rate (Bruell, Ehrnstr\"{o}m \& Pei \cite{BruellEhrnstroemPei17}). Other examples of current interest in fully dispersive equations include analytical investigations of bidirectional models in the spirit of Whitham (Ehrnstr\"{o}m, Johnson \& Claassen \cite{EhrnstroemJohnsonClaassen16}, Hur \& Tao \cite{HurTao16}) and Green-Naghdi (Duchene, Nilsson \& Wahl\'{e}n \cite{DucheneNilssonWahlen17}), as well as studies of the numerical, laboratory and modelling
properties of these equations (see respectively Claassen \& Johnson \cite{ClaassenJohnson17},
Carter \cite{Carter17} and Klein \emph{et al.} \cite{KleinLinaresPilodSaut17}). The monograph by Lannes \cite{Lannes} has a separate section on the subject of improved frequency dispersion. From a mathematical point of view, such equations often pose extra challenges arising from their more complicated symbols (which are typically inhomogeneous).

A higher-dimensional  example is given by the full-dispersion Kadomtsev--Petviashvili} (FDKP) equation
\begin{equation}\label{eq:FDKP}
u_t + m(\Diff) u_x + 2 u u_x  = 0,
\end{equation}
where the Fourier multiplier $m$ is given by
\[m(\Diff) = \left( 1 + \beta |\Diff|^2 \right)^{\frac{1}{2}} \left( \frac{\tanh  |\Diff|}{|\Diff|} \right)^{\frac{1}{2}} \left( 1 + \frac{ 2\Diff_2^2}{\Diff_1^2} \right)^{\frac{1}{2}}\]
with $\Diff = -\I(\partial_x, \partial_y)$, which was introduced by Lannes \cite{Lannes} (see also Lannes \&
Saut \cite{LannesSaut14}) as an alternative to the classical KP equation
\begin{equation}\label{eq:KP}
(u_t-2uu_x+\tfrac{1}{2}(\beta-\tfrac{1}{3})u_{xxx})_x -u_{yy}=0.
\end{equation}
Equation \eref{eq:KP} arises as a weakly nonlinear approximation for three-dimensional
gravity-capillary water waves, the parameter $\beta>0$
measuring the relative strength of surface tension; the cases $\beta>\frac{1}{3}$
(`strong surface tension') and $\beta<\frac{1}{3}$ (`weak surface tension') are termed
respectively KP-I and KP-II.

A \emph{(fully localised) FDKP solitary wave} is a nontrivial, evanescent solution of \eref{eq:FDKP} of the form $u(x,y,t)=u(x-ct,y)$
with wave speed $c>0$, that is, a homoclinic solution of the equation
\begin{equation}\label{eq:steady FDKP}
-c u + m(\Diff)u +  u^2 = 0.
\end{equation}
Similarly, a \emph{(fully localised) KP solitary wave} is a nontrivial, evanescent solution of \eref{eq:KP} of the form $u(x,y,t)=u(x-\tilde{c}t,y)$
with wave speed $\tilde{c}>0$, that is, a homoclinic solution of the equation
\begin{equation}\label{eq:steady KP}
(\tilde{c} -1)u  + \tilde{m}(\Diff)u +  u^2 = 0,
\end{equation}
where
\[\tilde{m}(\Diff) = 1+\frac{D_2^2}{D_1^2} - \tfrac{1}{2}(\beta- \tfrac{1}{3})D_1^2.\]

Note that the KP wave speed $\tilde{c}$ can be normalised to unity by the transformation
$u(x,y) \mapsto \tilde{c}u(\tilde{c}^\frac{1}{2}x,\tilde{c}y)$, which
converts \eref{eq:steady KP} into the equation
\begin{equation}
\tilde{m}(\Diff)u +  u^2 = 0. \label{eq:normalised steady KP}
\end{equation}
It is known that the KP-II equation does not admit any 
solitary waves (de Bouard \& Saut \cite{deBouardSaut97b}), while the explicit solutions
\begin{equation}
u(x,y)=-12\frac{3-X^2+Y^2}{(3+X^2+Y^2)^2}, \qquad
(X,Y)=\left(\tfrac{1}{2}(\beta-\tfrac{1}{3})\right)^{\!-\frac{1}{2}}\!\!(x,y)
 \label{eq:explicit floc}
\end{equation}
of \eref{eq:normalised steady KP} define KP-I  solitary waves
(see Figure \ref{fig:lump}). 
In this paper we demonstrate the existence of  solitary-wave solutions to the {FDKP-I} equation
and show how they are approximated by scalings of KP-I solitary waves.
(It is not known whether the latter are given by the explicit
formula \eref{eq:explicit floc}, but recent evidence points in this direction (see Chiron \& Scheid
\cite{ChironScheid17} and Liu \& Wei \cite{LiuWei17}).)

\begin{figure}
\centering
\includegraphics[width=4.5cm]{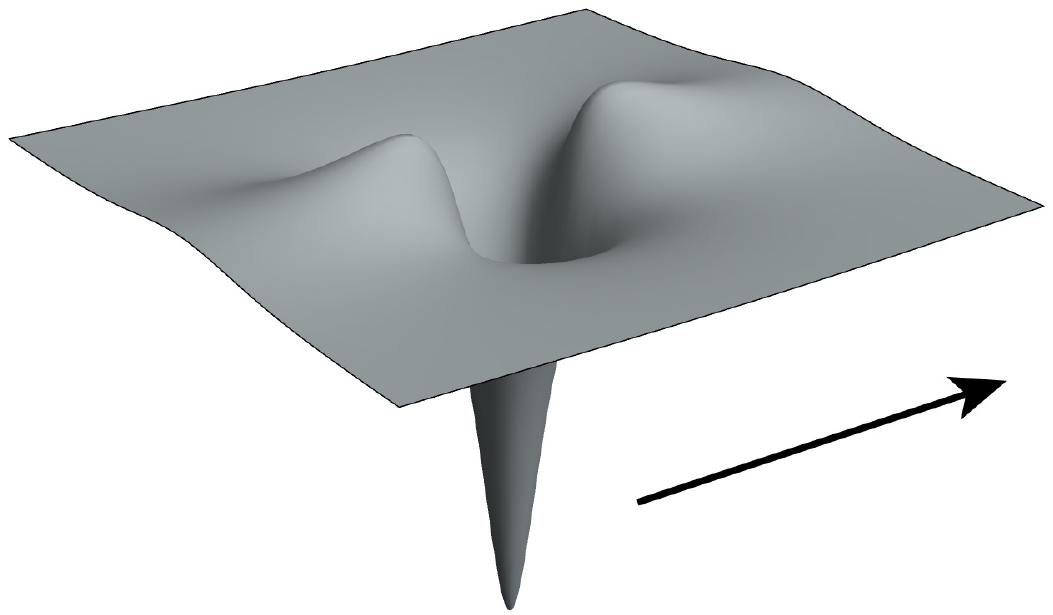}

\caption{Sketch of the KP-I  solitary wave given by \eref{eq:explicit floc}}
\label{fig:lump}
\end{figure}

\begin{theorem} \label{thm:main result 1}
There exists a  solitary-wave solution of the {FDKP-I} equation with speed $c=1-\varepsilon^2$
for each sufficiently small value of $\varepsilon>0$. This solution belongs to $H^\infty(\R^2)$ and
has polynomial decay rate.
\end{theorem}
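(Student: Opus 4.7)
The plan is variational and perturbative, following the outline announced in the abstract. Equation \eref{eq:steady FDKP} is the Euler--Lagrange equation of
\[
\mathcal{J}_c(u) = \tfrac{1}{2}\int_{\R^2}\bigl((m(\Diff) - c)u\bigr)u\, \dx\dy + \tfrac{1}{3}\int_{\R^2} u^3\, \dx\dy,
\]
which I consider on a Banach space $X$ tailored to the KP-type singularity of $m$ at $k_1 = 0$, essentially the closure of $\partial_x \Schwartz(\R^2)$ in the energy norm defined by the quadratic part. With $c = 1 - \varepsilon^2$, the symbol $m(k) - c$ is strictly positive off the origin and is well approximated, on low frequencies, by a scaled KP-I symbol.

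Motivated by this, I introduce the KP scaling $u(x,y) = \varepsilon^2 v(\varepsilon x, \varepsilon^2 y)$, which in Fourier variables concentrates $\hat v$ at frequencies of order $\varepsilon$ in $k_1$ and $\varepsilon^2$ in $k_2$; this is precisely the regime in which Taylor-expanding the first two factors of $m$ produces $\tilde m$ to leading order. After extracting the natural $\varepsilon$-power the rescaled functional takes the form $\mathcal{I}_\varepsilon(v) = \mathcal{I}_0(v) + \mathcal{R}_\varepsilon(v)$, where $\mathcal{I}_0$ is the KP-I functional associated with \eref{eq:normalised steady KP} and $\mathcal{R}_\varepsilon$ is a controlled small-in-$\varepsilon$ perturbation on low frequencies. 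Since the expansion is accurate only at low frequencies, I decompose $v = v_1 + v_2$ with Fourier supports in $\{|k_1| \leq \delta\}$ and $\{|k_1| > \delta\}$ respectively, for a small but fixed $\delta$. On the high-frequency piece $m(k) - c$ is uniformly bounded below, so the equation for $v_2$ can be solved implicitly in terms of $v_1$ by a Lyapunov--Schmidt-style contraction argument, yielding $v_2 = v_2(v_1)$ with quadratically small norm. The existence problem thereby reduces to finding a nontrivial critical point of the locally equivalent reduced functional $\tilde{\mathcal{J}}_\varepsilon(v_1) := \mathcal{J}_{1-\varepsilon^2}(v_1 + v_2(v_1))$ on the low-frequency subspace.

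The reduced functional $\tilde{\mathcal{J}}_\varepsilon$ is a genuine small perturbation of a rescaled copy of the KP-I functional, and I obtain a nontrivial critical point of it by minimising it over the natural constraint set --- a Nehari manifold, or equivalently the level set $\{\int v_1^3 = \text{const}\}$ extracted from a Pohozhaev identity. A minimising sequence is bounded in $X$, and the concentration--compactness principle is then applied: vanishing is excluded by the cubic constraint together with an anisotropic interpolation inequality on $X$, while dichotomy is excluded by a strict sub-additivity property of the infimum, which I derive by comparing $\tilde{\mathcal{J}}_\varepsilon$ with its $\varepsilon \to 0$ KP-I limit, whose strict sub-additivity in turn follows from the scaling invariance of the limiting constrained minimisation problem. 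A translated subsequence therefore converges to a minimiser; undoing the scalings produces a nontrivial solution $u^\star \in X$ of \eref{eq:steady FDKP} of amplitude $\bigO(\varepsilon^2)$.

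The hardest step is this concentration--compactness analysis: because the $X$-norm does not control any standard Sobolev norm (owing to the $D_2^2/D_1^2$ factor in $m$), tightness of the low-frequency profiles has to be extracted through carefully balanced Fourier-localised estimates, and the quantitative comparison with the KP-I infimum has to be sharp enough to give \emph{strict} sub-additivity uniformly for small $\varepsilon$. Once $u^\star$ is in hand, the regularity claim follows from a bootstrap on the fixed-point equation $u^\star = (m(\Diff) - c)^{-1}\bigl((u^\star)^2\bigr)$: the multiplier $(m(\Diff) - c)^{-1}$ is smooth away from $k = 0$, so repeated application gains Sobolev regularity and eventually yields $u^\star \in H^\infty(\R^2)$, while the algebraic nature of the low-frequency singularity of this multiplier, inherited from the KP-I operator, is what produces polynomial rather than exponential decay in physical space.
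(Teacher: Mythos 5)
Your overall architecture --- KP scaling, a low/high-frequency splitting resolved by a contraction-mapping reduction, a perturbed KP-I functional minimised on a natural constraint set, concentration--compactness, and a bootstrap for regularity --- is the same as the paper's, but two steps as you describe them would fail. First, the frequency cut $\{|k_1|\le\delta\}$ versus $\{|k_1|>\delta\}$ is the wrong one. The expansion $m(k)=\tilde m(k)+\bigO(|(k_1,\tfrac{k_2}{k_1})|^4)$ is valid only when \emph{both} $|k_1|$ and $|k_2/k_1|$ are small; on the set $\{|k_1|\le\delta,\ |k_2/k_1|\ge\delta\}$, which your splitting assigns to the ``KP part'', the factor $(1+2k_2^2/k_1^2)^{1/2}$ makes $m(k)-c$ bounded below by a positive constant, so the quadratic form there is of order one, is not close to the KP quadratic form, and does not become small under the KP scaling. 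Those frequencies must be solved away together with the high ones: the correct low-frequency region is the cone $C=\{|k|\le\delta,\ |k_2/k_1|\le\delta\}$, and it is precisely on the complement of $C$ that $n=m-1$ is invertible (Lemma \ref{lemma:n}). Second, you cannot run the reduction in the closure of $\partial_x\Schwartz(\R^2)$ under the energy norm of the quadratic part: that space controls too little Sobolev regularity for the map $u\mapsto u^2$ to have the smoothness and product estimates the contraction requires. The paper instead works in a strictly smaller space $X$ whose norm carries the extra weights $k_2^4/k_1^2+|k|^{2s}$ with $s>\tfrac32$, so that $X\hookrightarrow H^s(\R^2)\hookrightarrow\mathrm{BC}(\R^2)$ and $m(\Diff)$, $n(\Diff)$ map $X$ into a matching target space $Z$; only after the reduction does one pass to the KP energy space, on which (restricted to the cone) all these norms are equivalent.

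On the decay claim: ``polynomial decay rate'' in Theorem \ref{thm:main result 1} is a statement that the wave decays \emph{no faster} than polynomially. The paper proves it by showing that no critical point can lie in $L^1(\R^2)\cap L^\infty(\R^2)$: if $u\in L^1$ then $m(k)\hat u(k)=c\hat u(k)-\widehat{u^2}(k)$ would be continuous, whereas $m(k)\simeq 1+|k_2|/|k_1|$ is unbounded along $k_{1}=k_{2}^2\to0$. Your sketch gestures at the right heuristic (the singularity of the symbol at $k=0$ precludes rapid decay) but supplies no argument and reads the claim as an upper bound on the decay extracted from the multiplier, which is not what is asserted. Finally, a comparison rather than a gap: for ruling out dichotomy the paper does not use strict sub-additivity, which sits awkwardly with the Nehari-manifold formulation and with the non-homogeneous remainder $\mathcal{R}_\varepsilon$; it uses an abstract profile decomposition together with the observation that each nontrivial profile is itself a critical point on $N_\varepsilon$ and hence costs at least $c_\varepsilon\gtrsim1$, forcing a single profile. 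Indeed, for the bare existence statement of Theorem \ref{thm:main result 1} one need not exclude dichotomy at all: non-vanishing plus weak continuity of the Euler--Lagrange operator already produces a nontrivial critical point, and the ground-state refinement is a separate, subsequent step.
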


An FDKP solitary wave is characterised as a critical point of the Hamiltonian
\begin{equation}
{\mathcal E}(u) = \frac{1}{2} \int_{\R^2} | (m(\Diff))^{\frac{1}{2}} u |^2 \dx \dy + \frac{1}{3} \int_{\R^2} u^3 \dx\dy \label{eq:EFDKP}
\end{equation}
subject to the constraint that the momentum
\begin{equation}\label{eq:I}
{\mathcal M}(u) = \frac{1}{2} \int_{\R^2} u^2 \dx \dy
\end{equation}
is fixed; the Lagrange multiplier is the wave speed $c$. Using this observation we
may reformulate the existence statement in Theorem \ref{thm:main result 1} in terms of
the calculus of variations. Let $X$ denote the
completion of $\partial_{x} \Schwartz(\R^2)$ with respect to the norm
\[
|u|_{X}^2 = \int_{\R^2} \left( 1 + \frac{k_2^2}{k_1^2} + \frac{k_2^4}{k_1^2} + |k|^{2s}\right) |\hat u(k)|^2\,\diff k,
\]
where $s>\frac{3}{2}$ and $\Schwartz(\R^2)$ is the two-dimensional Schwartz space.

\begin{theorem} \label{thm:main result 2}
Suppose that $\beta>\frac{1}{3}$. The formula
${\mathcal I}_\varepsilon={\mathcal E}-c{\mathcal M}$ with $c=1-\varepsilon^2$ defines a smooth
functional ${\mathcal I}_\varepsilon: X \to \R$
which has a nontrivial critical point for each sufficiently small value of $\varepsilon>0$.
\end{theorem}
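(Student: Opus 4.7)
The argument is variational and perturbative. I will rescale to expose the KP-I structure of $\mathcal{I}_\varepsilon$, eliminate the high-frequency component of $u$ by a Lyapunov--Schmidt reduction, and find a nontrivial critical point of the resulting reduced functional by minimising it over its natural (Nehari-type) constraint set.

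The symbol admits the expansion
\[
m(k) = 1 + \tfrac{1}{2}(\beta - \tfrac{1}{3})|k|^2 + \frac{k_2^2}{k_1^2} + \bigO\bigl(|k|^4 + (k_2/k_1)^4\bigr)
\]
for small $|k|$ with $k_2/k_1$ bounded. Under the KP-I scaling $u(x,y) = \varepsilon^2 v(\varepsilon x, \varepsilon^2 y)$, for which $k_1 \sim \varepsilon$ and $k_2 \sim \varepsilon^2$, the quantity $m(k) - c = (m(k)-1) + \varepsilon^2$ is of order $\varepsilon^2$ on the relevant Fourier scales, and a rescaled version of $\mathcal{I}_\varepsilon$ coincides, up to higher-order corrections, with the variational functional associated with the KP-I solitary wave; the error terms are controlled by the $|k|^{2s}$ weight in $X$ with $s>3/2$.

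To treat all frequencies at once I would perform a Lyapunov--Schmidt reduction in Fourier space. Choose a smooth radial cut-off $\chi$ supported near the origin, decompose $u = u_1 + u_2$ with $\hat u_1 = \chi \hat u$ and $\hat u_2 = (1-\chi)\hat u$, and observe that since $m(k) \geq 1 > 1-\varepsilon^2 = c$, the operator $m(\Diff) - c$ is uniformly coercive on the high-frequency subspace for small $\varepsilon$. The equation $D_{u_2}\mathcal{I}_\varepsilon(u_1+u_2) = 0$ is therefore solved by the implicit function theorem, giving a smooth map $u_1 \mapsto u_2(u_1)$ with $u_2(0) = 0$. Critical points of the reduced functional $\mathcal{J}_\varepsilon(u_1) := \mathcal{I}_\varepsilon(u_1+u_2(u_1))$ lift to critical points of $\mathcal{I}_\varepsilon$, and the leading-order expansion $u_2(u_1) = -(m(\Diff)-c)^{-1}(1-\chi(\Diff))u_1^2 + \bigO(u_1^3)$ shows that after the KP-I rescaling $\mathcal{J}_\varepsilon$ is a $C^2$-small perturbation of the KP-I action on the rescaled low-frequency subspace.

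A nontrivial critical point of $\mathcal{J}_\varepsilon$ is then obtained by minimising it over the Nehari manifold $\mathcal{N}_\varepsilon = \{u_1 \neq 0 : \langle \mathcal{J}_\varepsilon'(u_1), u_1\rangle = 0\}$, on which $\mathcal{J}_\varepsilon > 0$ and minimising sequences are uniformly bounded in $X$. The main obstacle is loss of compactness on $\R^2$, which I would handle by concentration-compactness in the sense of Lions; the crucial ingredient is strict subadditivity of the infimum $\inf_{\mathcal{N}_\varepsilon}\mathcal{J}_\varepsilon$, which is inherited from the corresponding strict subadditivity of the KP-I minimisation problem together with the perturbation estimates from the reduction and, once established, rules out both vanishing and dichotomy. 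The resulting minimiser, lifted by $u_1 \mapsto u_1 + u_2(u_1)$, furnishes the required nontrivial critical point of $\mathcal{I}_\varepsilon$.
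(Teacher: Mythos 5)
Your overall strategy (frequency splitting, reduction to the low modes, KP rescaling, Nehari-manifold minimisation plus concentration-compactness) is the same as the paper's, but two steps as you describe them would fail.

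First, the cut-off. You take $\chi$ to be a \emph{radial} cut-off near the origin, but the FDKP symbol is a function of $(k_1,k_2/k_1)$, and $m(k)\simeq 1+|k_2|/|k_1|$ throughout $|k|\leq\delta$. Hence on the set $\{|k|\leq\delta,\ |k_2/k_1|\geq\delta\}$, which your radial cut-off places in the \emph{low}-frequency component $u_1$, one has $m(k)-1\gtrsim 1$: these modes are uniformly elliptic and do not participate in the KP limit, and the estimate $m(k)=\tilde m(k)+\bigO(|(k_1,k_2/k_1)|^4)$ is simply false there. Consequently the rescaled reduced functional is not a perturbation of the KP-I action on your low-frequency subspace, and everything downstream (the structure of the Nehari manifold, the a priori bounds, the identification of the limit) breaks. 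The correct choice is the characteristic function of the anisotropic cone $C=\{|k|\leq\delta,\ |k_2/k_1|\leq\delta\}$, so that the complementary region $\{|k|\leq\delta,\ |k_2/k_1|\geq\delta\}$ is absorbed into $u_2$, where $n(\Diff)=m(\Diff)-1$ is an isomorphism $X_2\to Z_2$.

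Second, your claim that after rescaling the reduced functional ``coincides, up to higher-order corrections, with the KP-I functional; the error terms are controlled by the $|k|^{2s}$ weight'' hides a quantitative failure. On the cone, $n(k)-\tilde n(k)=\bigO(|(k_1,k_2/k_1)|^4)$, and under the KP scaling this yields
\[
\int_{\R^2}\bigl(n(k)-\tilde n(k)\bigr)|\hat u_1|^2\,\diff k=\bigO(\varepsilon^3|\zeta|_{\tilde Y}^2),
\]
which after the necessary normalisation of the functional by $\varepsilon^{-3}$ is $\bigO(|\zeta|_{\tilde Y}^2)$ --- the same order as the KP quadratic form itself, not a small perturbation (the $|k|^{2s}$ weight is irrelevant at low frequency). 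The paper circumvents this by the exact change of variables $\tilde u_1=(n(\Diff)/\tilde n(\Diff))^{1/2}u_1$, which makes the quadratic parts of the two functionals \emph{identical} and pushes all errors into the cubic and remainder terms, where they are genuinely $\bigO(\varepsilon^{1/2})$. Some substitute for this device is indispensable. Finally, a smaller point on compactness: for Nehari-manifold minimisation the exclusion of dichotomy does not come from strict subadditivity of a constrained-minimisation value (which is not the quantity being minimised here) but from the observation that each nontrivial profile in the decomposition is itself a critical point lying on $N_\varepsilon$ and therefore contributes at least $c_\varepsilon\gtrsim1$ to the limiting energy, forcing the number of profiles to be one; and vanishing is excluded not by subadditivity but by the anisotropic estimate $\int|\zeta|^3\lesssim\sup_j|\zeta|_{L^2(Q_j)}^{1/6}|\zeta|_{\tilde Y}^{17/6}$ combined with the lower bound $-{\mathcal S}(\zeta)\gtrsim1$ on $N_\varepsilon$. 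Note also that for the bare existence of a nontrivial critical point the dichotomy analysis is not needed at all: nonvanishing plus weak continuity of the Euler--Lagrange operator already yields a nontrivial weak limit which is a critical point.
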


To motivate our main result it is instructive to review the formal derivation of the (normalised) steady KP equation
\eref{eq:normalised steady KP} from the steady FDKP equation \eref{eq:steady FDKP}.
We begin with the linear dispersion relation for a two-dimensional sinusoidal
travelling wave train with wave number $k_1$ and speed $c$, namely
\[c= \left( 1 + \beta |k_1|^2 \right)^{\frac{1}{2}} \left( \frac{\tanh  |k_1|}{|k_1|} \right)^{\frac{1}{2}}\]
The function $k_1 \mapsto c(k_1)$, $k_1 \geq 0$ has a unique global minimum at $k_1=0$ with $c(0)=1$ (see Figure 
\ref{fig:dispersion relation}). Bifurcations of nonlinear solitary waves {are} expected whenever the
linear group and phase speeds are equal, so that $c^\prime(k_1)=0$ (see Dias \& Kharif \cite[\S 3]{DiasKharif99});
one therefore expects bifurcation of small-amplitude solitary waves from uniform flow with unit speed. 
Furthermore, observing
that $m$ is an analytic function of $k_1$ and $\frac{k_2}{k_1}$ (note that
$|k|^2 = k_1^2 + \frac{k_2^2}{k_1^2}k_1^2$), one finds that
\begin{equation}
m(k)=\tilde{m}(k) + O(|(k_1,\tfrac{k_2}{k_1})|^4) \label{eq:FDKP to KP}
\end{equation}
as $(k_1,\frac{k_2}{k_1}) \to 0$.
We therefore make the steady-wave \emph{Ansatz} $u(x,y,t)=\tilde{u}(x-ct,y)$ and substitute $c=1-\varepsilon^2$
and
\begin{equation}
 \tilde{u}(x,y) = \varepsilon^2\zeta(\varepsilon x, \varepsilon^2 y)
\label{eq:KP scaling}
\end{equation}
into equation \eref{eq:steady FDKP}. This calculation shows that to leading order $\zeta$ satisfies
\begin{equation}
\tilde{m}(D)\zeta +  \zeta^2 =0, \label{eq:normalised steady KP again}
\end{equation}
which is the Euler--Lagrange equation for the (smooth) functional
${\mathcal T}_0: \tilde{Y} \to \R$ given by
\[{\mathcal T}_0(\zeta)=  \frac{1}{2} \int_{\R^2} | (\tilde{m}(\Diff))^{\frac{1}{2}} \zeta |^2 \dx \dy + \frac{1}{3} \int_{\R^2} \zeta^3 \dx\dy,\]
where $\tilde{Y}$ is the completion of $\partial_x \Schwartz(\R^2)$ with respect to the norm
\[|\zeta|_{\tilde Y}^2= \int_{\R^2} \left(1+\tfrac{k_2^2}{k_1^2}+k_1^2\right)|\hat{\zeta}|^2\,\dk.\]
We proceed by performing a rigorous local variational reduction
which converts ${\mathcal I}_\varepsilon$ to a perturbation ${\mathcal T}_\varepsilon$ of ${\mathcal T}_0$
(Section \ref{sec:reduction}).

\begin{figure}
\centering
\includegraphics[width=4.5cm]{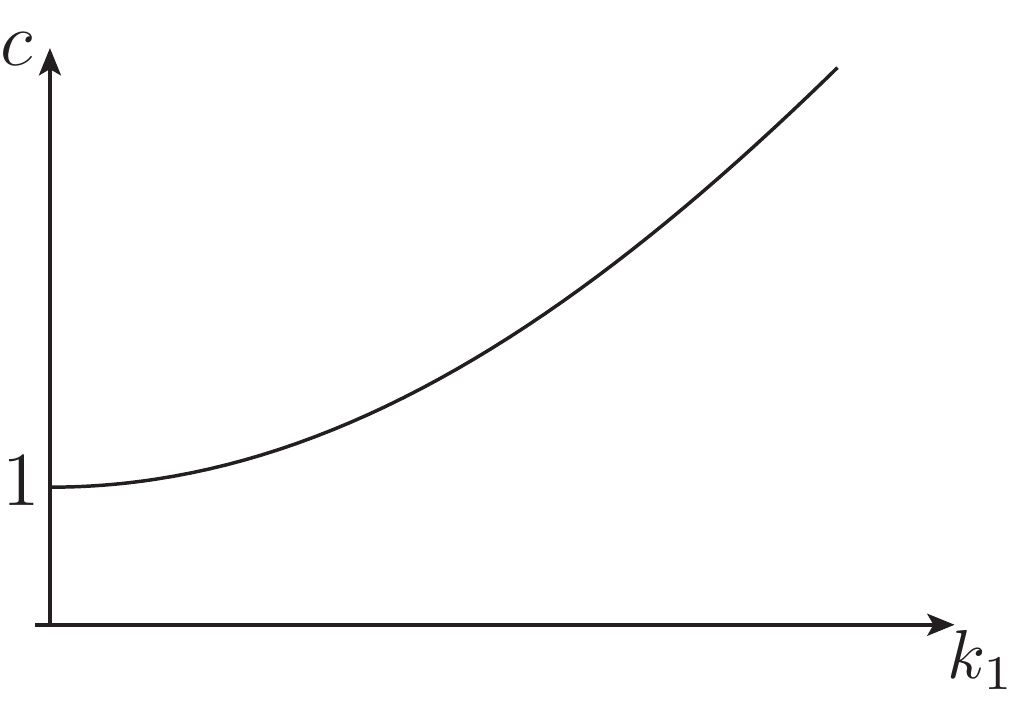}

\caption{FKDP-I dispersion relation for two-dimensional wave trains}
\label{fig:dispersion relation}
\end{figure}

\begin{figure}
\centering
\includegraphics[scale=0.67]{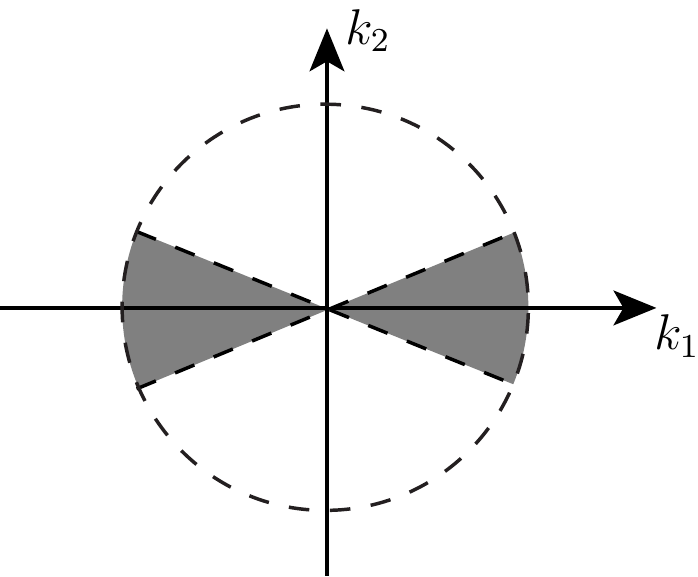}

\caption{The cone $C = \{ k \in \R^2 \colon |k| \leq \delta, \tfrac{|k_2|}{|k_1|} \leq \delta \}$ cut out of the closed ball $\{k \in \R^2 \colon |k| \leq \delta\}$ in $\R^2$.}
\label{fig:bow tie}
\end{figure}

The estimate \eref{eq:FDKP to KP} suggests that the spectrum of
a  solitary wave $ u(x,y)$ is concentrated in the region $|k_1|, |\frac{k_2}{k_1}| \ll 1$.
We therefore
decompose $u$ into the sum
of functions $u_1$ and $u_2$ whose spectra are supported in the
region
\[  C=\left\{(k_1,k_2) \colon |k| \leq \delta, \left|\tfrac{k_2}{k_1}\right| \leq \delta\right\}\]
and its complement, where $\delta$ is a small positive number (see Figure \ref{fig:bow tie}), so that
\[u_1 = \chi(\Diff)u, \qquad u_2=(1-\chi(\Diff))u,\]
in which
$\chi$ is the characteristic function of $C$. In Section \ref{sec:reduction} we employ a method
akin to the variational Lyapunov-Schmidt reduction
to determine $u_2$ as a function of $u_1$ and thus obtain the reduced functional
${\mathcal J}_\varepsilon: U \to \R$ given by
\[{\mathcal J}_\varepsilon(u_1)={\mathcal I}_\varepsilon(u_1+u_2(u_1));\]
here $U = \{u_1 \in X_1: |u_1|_\varepsilon \leq 1\}$ is the unit ball
in the space $(X_1,|\cdot|_\varepsilon)$,
in which $X_1=\chi(D)X$ and $|\cdot|_\varepsilon$ is the scaled norm
\[
|u_1|_\varepsilon^2 = \int_{\R^2} \left( 1 +  \varepsilon^{-2} \frac{k_2^2}{k_1^2} + \varepsilon^{-2} k_1^2\right) |\hat u_1(k)|^2\, \diff k.
\]
Applying the KP scaling \eref{eq:KP scaling} to $u_1$, one finds that
${\mathcal J}_\varepsilon(u_1)=\varepsilon^3 {\mathcal T}_\varepsilon(\zeta)$, where
\[
{\mathcal T}_\varepsilon(\zeta)={\mathcal T}_0(\zeta)+\varepsilon^{\frac{1}{2}}{\mathcal R}_\varepsilon(\zeta), \qquad {\mathcal R}_\varepsilon(\zeta) \lesssim |\zeta|_{\tilde Y}^2\]
(with corresponding estimates for the derivatives of the remainder term).
Each critical point $\zeta_\infty$ of ${\mathcal T}_\varepsilon$ with $\varepsilon>0$
corresponds to a critical point $u_{1,\infty}$ of
$\mathcal J_\varepsilon$, which in turn defines a critical point $u_{1,\infty}+u_2(u_{1,\infty})$ of
${\mathcal I}_\varepsilon$.

We study ${\mathcal T}_\varepsilon$
in a fixed ball
\[
B_M(0)=\{\zeta \colon |\zeta|_{\tilde Y}<M\}, 
\]
in the space
$(\tilde{Y}_\varepsilon,|\cdot|_{\tilde Y})$, where $\tilde{Y}_\varepsilon=\chi_\varepsilon(D)\tilde{Y}$ and
$\chi_\varepsilon(k_1,k_2)=\chi(\varepsilon k_1, \varepsilon^2 k_2)$.  The parameters $M$ and $\varepsilon$ are related in the following manner: for any $M>1$ there exists
$\varepsilon_M \lesssim M^{-2}$ such that all estimates hold uniformly over $\varepsilon \in
[0,\varepsilon_M]$. We do not make make these threshold values of $\varepsilon$ explicit; it is simply assumed that $\varepsilon_M$ is taken sufficiently small.
In the limit $\varepsilon=0$
we can set $M=\infty$ and recover the KP variational functional ${\mathcal T}_0: \tilde{Y} \to \R$
(note that $\tilde{Y}_0=\tilde{Y}$).  In fact ${\mathcal T}_\varepsilon: B_M(0) \to \R$ may be considered as a perturbation
of the `limiting' functional ${\mathcal T}_0: \tilde{Y} \to \R$. More precisely
$\varepsilon^\frac{1}{2}{\mathcal R}_\varepsilon \circ\chi_\varepsilon(D)$ (which coincides with
$\varepsilon^\frac{1}{2}{\mathcal R}_\varepsilon$ on $B_M(0) \subset \tilde{Y}_\varepsilon$) converges uniformly
to zero over $B_M(0) \subset \tilde{Y}$ (with corresponding uniform convergence for its derivatives),
and we study ${\mathcal T}_\varepsilon$ by perturbative arguments in this spirit.

In Section \ref{sec:existence} we seek critical points of ${\mathcal T}_\varepsilon$ by minimising it on its
\emph{natural constraint set}
\[N_\varepsilon=\{\zeta\in B_M(0):\zeta\neq 0,\mathrm{d}{\mathcal T}_\varepsilon[\zeta](\zeta)=0\},\]
our motivation being the observation that the critical points of ${\mathcal T}_\varepsilon$ coincide with
those of ${\mathcal T}_\varepsilon|_{N_\varepsilon}$.
The natural constraint set has a geometrical interpretation (see Figure \ref{fig:ncs geometry}),
namely that any ray in $B_M(0)$ intersects the natural constraint manifold
$N_\varepsilon$ in at most one point and the value of ${\mathcal T}_\varepsilon$ along
such a ray attains a strict maximum at this point.
This fact is readily established by a direct calculation for $\varepsilon=0$ and deduced by a
perturbation argument for $\varepsilon>0$, and similar perturbative methods yield the existence of a 
a sequence $\{\zeta_n\} \subset B_{M-1}(0)$
with
\[{\mathcal T}_\varepsilon|_{N_\varepsilon} \to \inf {\mathcal T}_\varepsilon|_{N_\varepsilon}>0, \qquad
|\mathrm{d}{\mathcal T}_\varepsilon[\zeta_n]|_{\tilde{Y}_\varepsilon \to \R} \to 0\]
as $n \to \infty$.
The following theorem is established by applying weak continuity arguments to minimising sequences
of the above kind.

\begin{theorem} \label{thm:variational 1}
Let $\{\zeta_n\} \subset B_{M-1}(0)$ be a minimising sequence for ${\mathcal T}_\varepsilon|_{N_\varepsilon}$
with
$|\mathrm{d}{\mathcal T}_\varepsilon[\zeta_n]|_{\tilde{Y}_\varepsilon \to \R}\to 0$ as $n \to \infty$.
There exists $\{w_n\} \subset \Z^2$ with the property that a subsequence of
$\{\zeta_n(\cdot+w_n)\}$
converges weakly in $\tilde{Y}_\varepsilon$ to a nontrivial critical point $\zeta_\infty$ of
${\mathcal T}_\varepsilon$.
\end{theorem}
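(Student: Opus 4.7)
The plan is to treat Theorem \ref{thm:variational 1} as a concentration-compactness argument for a Palais--Smale-type sequence, adapted to the anisotropic cone-localised space $\tilde Y_\varepsilon$ and exploiting the translation invariance of $\mathcal{T}_\varepsilon$. Boundedness of $\{\zeta_n\}$ in $\tilde{Y}_\varepsilon$ is automatic since the sequence lies in $B_{M-1}(0)$, and the weak limit of any subsequence will again lie in $B_M(0)$ by lower semicontinuity of $|\cdot|_{\tilde Y}$.

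\emph{Nonvanishing.} The natural constraint $\mathrm{d}\mathcal{T}_\varepsilon[\zeta_n](\zeta_n) = 0$, combined with the estimates $\varepsilon^{1/2}\mathcal{R}_\varepsilon(\zeta)\lesssim |\zeta|_{\tilde Y}^2$ and their counterparts for the derivatives, yields
\[
\int_{\R^2}|(\tilde m(\Diff))^{\frac{1}{2}}\zeta_n|^2 \dx\dy + \int_{\R^2}\zeta_n^3 \dx\dy = O(\varepsilon^{\frac{1}{2}}),
\]
and hence $\mathcal{T}_\varepsilon(\zeta_n) = \tfrac{1}{6}\int |(\tilde m(\Diff))^{\frac{1}{2}}\zeta_n|^2 \dx\dy + O(\varepsilon^{\frac{1}{2}})$. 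Since $\mathcal{T}_\varepsilon(\zeta_n)\to \inf \mathcal{T}_\varepsilon|_{N_\varepsilon}>0$, both this integral and $\bigl|\int\zeta_n^3\dx\dy\bigr|$ are bounded below by a positive constant for all large $n$. If vanishing held, i.e.\ $\sup_{w \in \R^2} \int_{B(w,1)} |\zeta_n|^2\dx\dy \to 0$, a Lions-type concentration-compactness lemma in $\tilde Y_\varepsilon$ --- relying on the spectral cut-off $\chi_\varepsilon(\Diff)$ to supply a Gagliardo--Nirenberg interpolation between the uniform-local $L^2$ norm and $|\cdot|_{\tilde Y}$ --- would force $\|\zeta_n\|_{L^3} \to 0$, a contradiction. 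Rounding suitable concentration centres to integer points then produces $w_n \in \Z^2$ and $\alpha>0$ with $\int_{B(w_n,\sqrt 2)}|\zeta_n|^2\dx\dy \geq \alpha$.

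\emph{Weak limit, non-triviality, and critical point.} Set $\tilde\zeta_n := \zeta_n(\cdot+w_n)$. By translation invariance of $\mathcal{T}_\varepsilon$ and of $|\cdot|_{\tilde Y}$, $\tilde\zeta_n$ still satisfies the Palais--Smale hypotheses at the same energy level. Passing to a weakly convergent subsequence $\tilde\zeta_n \rightharpoonup \zeta_\infty$ in $\tilde Y_\varepsilon$, the local compact embedding $\tilde Y_\varepsilon \hookrightarrow L^2_{\mathrm{loc}}(\R^2)$ (a consequence of the spectral localisation together with a Rellich--Kondrachov argument) upgrades this to strong convergence in $L^2(B(0,\sqrt 2))$, whence $\int_{B(0,\sqrt 2)}|\zeta_\infty|^2\dx\dy \geq \alpha > 0$ and $\zeta_\infty \neq 0$. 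To verify $\mathrm{d}\mathcal{T}_\varepsilon[\zeta_\infty]=0$, I would test against an arbitrary $v$ in a dense subset of $\tilde Y_\varepsilon$, e.g.\ compactly supported smooth functions with spectrum in $\supp\chi_\varepsilon$: the quadratic contribution passes to the limit by weak convergence; the cubic contribution $\int \tilde\zeta_n^2 v\dx\dy$ by the strong $L^2$ convergence on $\supp v$; and the remainder contribution $\varepsilon^{\frac{1}{2}}\,\mathrm{d}\mathcal{R}_\varepsilon[\tilde\zeta_n](v)$ by the analogous weak continuity inherited from the structure of $\mathcal{R}_\varepsilon$. Combined with $|\mathrm{d}\mathcal{T}_\varepsilon[\tilde\zeta_n]|_{\tilde Y_\varepsilon \to \R}\to 0$, this yields $\mathrm{d}\mathcal{T}_\varepsilon[\zeta_\infty](v)=0$ on the dense subset, hence on all of $\tilde Y_\varepsilon$.

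The hard part will be establishing the Lions-type nonvanishing lemma and the local compactness embedding uniformly in $\varepsilon \in [0,\varepsilon_M]$: the three weights $1$, $k_2^2/k_1^2$, $k_1^2$ in $|\cdot|_{\tilde Y}$ are inequivalent to those of any standard isotropic Sobolev space, and the spectral cone itself is $\varepsilon$-scaled. The corresponding weak continuity of $\mathrm{d}\mathcal{R}_\varepsilon$ on bounded sequences in $\tilde Y_\varepsilon$, uniformly in $\varepsilon$, is the other nontrivial input. Once these are in hand the argument proceeds along the standard concentration-compactness template.
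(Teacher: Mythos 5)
Your overall template --- nonvanishing via a Lions-type lemma, translation, weak limit, local compactness for nontriviality, then passing to the limit in the Euler--Lagrange equation --- is exactly the paper's (Lemma~\ref{lemma:critical points}, Proposition~\ref{prop:17/6}, Corollary~\ref{cor:no vanishing}, Theorem~\ref{thm:first existence theorem}), and your derivation of the lower bound on $|{\mathcal S}(\zeta_n)|$ from the constraint identity matches Remark~\ref{rem:inf is positive}. The first input you flag as hard is discharged in the paper exactly as you anticipate: Proposition~\ref{prop:17/6} proves $\int|\zeta||\xi|^2 \lesssim \sup_j|\zeta|_{L^2(Q_j)}^{1/6}|\zeta|_{\tilde Y}^{5/6}|\xi|_{\tilde Y}^2$ by interpolating between $L^2(Q_j)$ and $L^6(Q_j)$ on unit cubes and using the local embedding $\tilde Y(Q_j)\hookrightarrow L^6(Q_j)$ (Proposition~\ref{prop:anisotropic embeddings}, via the local representation $\zeta = w_x$); the compact embedding $\tilde Y \hookrightarrow L^2_{\mathrm{loc}}$ needed for nontriviality of the limit is likewise already in Proposition~\ref{prop:anisotropic embeddings}(ii). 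So that part of your sketch is sound and essentially identical to the paper's.

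The second input is where your route would stall for $\varepsilon>0$. You propose to pass to the limit in $\varepsilon^{1/2}\,\mathrm{d}{\mathcal R}_\varepsilon[\tilde\zeta_n](v)$ by ``weak continuity inherited from the structure of ${\mathcal R}_\varepsilon$'', but ${\mathcal R}_\varepsilon$ is only characterised through the fixed-point reduction $u_2(u_1)$, the multiplier change $(n/\tilde n)^{1/2}$ and the norm bounds $|\mathrm{d}^k{\mathcal R}_\varepsilon|\lesssim|\zeta|_{\tilde Y}^{2-k}$; such bounds give no weak continuity, and proving it directly for the implicitly defined remainder would be a substantial detour. The paper avoids this entirely: it uses the identity $\mathrm{d}{\mathcal I}_\varepsilon[u](w_1)=\varepsilon^3\,\mathrm{d}{\mathcal T}_\varepsilon[\zeta](\varrho)$ to transfer the Palais--Smale property to the \emph{unreduced} functional ${\mathcal I}_\varepsilon$ on $X$, shows that weak convergence $\zeta_n\rightharpoonup\zeta_\infty$ in $\tilde Y_\varepsilon$ propagates through the reduction to $u_n\rightharpoonup u_\infty$ in $X$ (Proposition~\ref{prop:weak trace back}, which rests on the weak continuity of the fixed-point map $G$), and then passes to the limit in $\mathrm{d}{\mathcal I}_\varepsilon[u_n](w)$ using the weak continuity of $u\mapsto \varepsilon^2u+n(\Diff)u+u^2$ from $X$ to $Z$ (Proposition~\ref{prop:FDKP op weak}). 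Your direct approach works as written only for $\varepsilon=0$, where ${\mathcal R}_0$ is absent and the paper indeed argues exactly as you do; for $\varepsilon>0$ you need the pull-back to ${\mathcal I}_\varepsilon$ (or an equivalent weak-continuity statement for the reduction), and this should be supplied rather than asserted.
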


The short proof of Theorem \ref{thm:variational 1} does not show that the critical point $\zeta_\infty$ is a \emph{ground state},
{ that is,} a minimiser of ${\mathcal T}_\varepsilon$ over $N_\varepsilon$.
This deficiency is removed in Section \ref{sec:ground states} with the help of an abstract version of the
concentration-compactness principle due to Buffoni, Groves \& Wahl\'{e}n \cite[Appendix A]{BuffoniGrovesWahlen18}. (That paper treats fully localised solitary waves in the Euler equations with weak surface tension using theory closely connected to ours.) 

\begin{theorem} \label{thm:variational 2}
Let $\{\zeta_n\} \subset B_{M-1}(0)$ be a minimising sequence for ${\mathcal T}_\varepsilon|_{N_\varepsilon}$
with\linebreak $|\mathrm{d}{\mathcal T}_\varepsilon[\zeta_n]|_{\tilde{Y}_\varepsilon \to \R}\to 0$ as $n \to \infty$.
There exists $\{w_n\} \subset \Z^2$ with the property that a subsequence of
$\{\zeta_n(\cdot+w_n)\}$
converges weakly, and strongly if $\varepsilon=0$, in $\tilde{Y}_\varepsilon$ to a ground state $\zeta_\infty$.
\end{theorem}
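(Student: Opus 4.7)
My plan is to use Theorem \ref{thm:variational 1} to reduce the problem to showing that the nontrivial weak limit $\zeta_\infty$ already provided there is in fact a minimiser of $\mathcal{T}_\varepsilon$ over $N_\varepsilon$. Set $c_\varepsilon := \inf_{N_\varepsilon}\mathcal{T}_\varepsilon$; we know $c_\varepsilon>0$. By Theorem \ref{thm:variational 1}, after passing to a subsequence and translating by $\{w_n\}\subset\Z^2$, the sequence $\tilde\zeta_n:=\zeta_n(\cdot+w_n)$ converges weakly in $\tilde{Y}_\varepsilon$ to a critical point $\zeta_\infty\neq 0$. Weak lower semicontinuity of $|\cdot|_{\tilde Y}$ places $\zeta_\infty$ in $B_{M-1}(0)$, and $\mathrm{d}\mathcal{T}_\varepsilon[\zeta_\infty]=0$ gives $\zeta_\infty\in N_\varepsilon$, so $\mathcal{T}_\varepsilon(\zeta_\infty)\geq c_\varepsilon$ automatically. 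The task is the reverse inequality, which amounts to passing the cubic term $\int\tilde\zeta_n^3\dx\dy$ and the perturbative term $\varepsilon^{1/2}\mathcal{R}_\varepsilon(\tilde\zeta_n)$ to the weak limit.

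The tool I would use is the abstract concentration-compactness principle of Buffoni, Groves \& Wahl\'en \cite[Appendix A]{BuffoniGrovesWahlen18}, applied to a suitable $L^2$-type density built from $\tilde\zeta_n$. It yields one of three alternatives: vanishing, dichotomy, or compactness. Vanishing is excluded because a Lions-type lemma would then give $\tilde\zeta_n\to 0$ in $L^3(\R^2)$; combined with the natural-constraint identity $\mathrm{d}\mathcal{T}_\varepsilon[\tilde\zeta_n](\tilde\zeta_n)=0$ and the smallness of the $\mathrm{d}\mathcal{R}_\varepsilon$ contribution, this forces $|(\tilde m(\Diff))^{1/2}\tilde\zeta_n|_{L^2}\to 0$ and hence $\mathcal{T}_\varepsilon(\tilde\zeta_n)\to 0$, contradicting $\mathcal{T}_\varepsilon(\tilde\zeta_n)\to c_\varepsilon>0$ (and equally contradicting $\zeta_\infty\neq 0$).

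The main obstacle, and the step I would prepare most carefully, is ruling out dichotomy. If $\tilde\zeta_n=\zeta_n^{(1)}+\zeta_n^{(2)}$ with supports separating in $\R^2$, an asymptotic-additivity argument (compatible with the convolution-type estimates on $\mathcal{R}_\varepsilon$) yields
\[
\mathcal{T}_\varepsilon(\tilde\zeta_n)=\mathcal{T}_\varepsilon(\zeta_n^{(1)})+\mathcal{T}_\varepsilon(\zeta_n^{(2)})+o(1),\qquad \mathrm{d}\mathcal{T}_\varepsilon[\tilde\zeta_n](\tilde\zeta_n)=\sum_{i=1,2}\mathrm{d}\mathcal{T}_\varepsilon[\zeta_n^{(i)}](\zeta_n^{(i)})+o(1).
\]
I would then invoke the geometrical description of $N_\varepsilon$ recalled before Theorem \ref{thm:variational 1}: every ray in $B_M(0)$ meets $N_\varepsilon$ in at most one point, which is a strict maximum of $\mathcal{T}_\varepsilon$ along the ray. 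Rescaling each nontrivial piece $\zeta_n^{(i)}$ onto $N_\varepsilon$ therefore only increases $\mathcal{T}_\varepsilon$ up to an $o(1)$ correction, and summing gives $c_\varepsilon\geq 2c_\varepsilon+o(1)$, impossible because $c_\varepsilon>0$. The delicate point is tracking the perturbation $\varepsilon^{1/2}\mathcal{R}_\varepsilon$ through both the decomposition and the rescaling-projection onto $N_\varepsilon$.

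Only compactness then remains, so, after a further bounded translation absorbed into $\{w_n\}$, the subsequence $\{\tilde\zeta_n\}$ is tight and consequently converges to $\zeta_\infty$ strongly in $L^3(\R^2)$. This permits passage to the limit in the cubic integral and, by smoothness of $\mathcal{R}_\varepsilon$, in the perturbative term; together with weak lower semicontinuity of the quadratic part it gives $\mathcal{T}_\varepsilon(\zeta_\infty)\leq c_\varepsilon$, whence equality and $\zeta_\infty$ is a ground state. For $\varepsilon=0$ the natural-constraint identity $\mathrm{d}\mathcal{T}_0[\zeta](\zeta)=0$ on $N_0$ reduces $\mathcal{T}_0$ there to $\tfrac{1}{6}\int|(\tilde m(\Diff))^{1/2}\zeta|^2\dx\dy$, which is equivalent to $|\cdot|_{\tilde Y}^2$. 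The convergence $\mathcal{T}_0(\tilde\zeta_n)\to\mathcal{T}_0(\zeta_\infty)$ is therefore convergence of the Hilbert norms, and combined with weak convergence, the Radon--Riesz property upgrades it to strong convergence in $\tilde Y$.
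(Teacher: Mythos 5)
Your overall strategy (concentration--compactness plus the geometry of the natural constraint set) is the same as the paper's, and your $\varepsilon=0$ endgame --- convergence of ${\mathcal S}$, the constraint identity to upgrade this to convergence of ${\mathcal Q}$, and then Radon--Riesz --- is exactly Lemma \ref{lemma:general convergence}. The paper implements the compactness step via the profile-decomposition version of concentration--compactness (Theorem \ref{thm:cc}) applied to $x_{n,j}=\zeta_n(\cdot+j)|_{Q_0}$ in $\ell^2(\Z^2,\tilde Y(Q_0))$, and excludes ``dichotomy'' by showing the number of profiles is $m=1$ through the estimate $c_\varepsilon \geq mc_\varepsilon - \bigO(\varepsilon^{\frac{1}{2}})$; this is essentially your ray-projection argument in a cleaner packaging, so that part of your sketch is acceptable in spirit.

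The genuine gap is in your final step for $\varepsilon>0$: the claim that one may pass to the limit in the perturbative term ``by smoothness of ${\mathcal R}_\varepsilon$''. Smoothness gives continuity of ${\mathcal R}_\varepsilon$ with respect to \emph{strong} convergence in $\tilde Y_\varepsilon$, but for $\varepsilon>0$ you only have weak convergence there (indeed the theorem asserts no more, and the abstract compactness alternative only yields $\sup_{j}|\zeta_n(\cdot+w_n)-\zeta_\infty|_{L^2(Q_j)}\to 0$, i.e.\ uniform-local $L^2$ smallness, \emph{not} tightness or strong $L^2(\R^2)$ convergence --- your appeal to tightness over-claims here). The functional ${\mathcal R}_\varepsilon$ contains quadratic, non-weakly-continuous pieces, e.g.\ $K_1(u_1)=|u_2(u_1)|_{L^2}^2$, $K_5$, and the multiplier correction $K_6(\tilde u_1)=\int_{\R^2}\bigl(\bigl|1-\tfrac{n(\Diff)}{\tilde n(\Diff)}\bigr|^{\frac{1}{2}}u_1\bigr)^2\dx\dy$, which are controlled by $L^2(\R^2)$-type quantities and do not converge under weak convergence plus strong $L^3(\R^2)$ convergence; nor do they have a sign allowing a one-sided semicontinuity argument. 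The paper circumvents this by returning to the unscaled variables: it shows $u_n\to u_\infty$ in $L^\infty(\R^2)$ (via Remark \ref{rem:unbounded sequences}) and hence in $L^3(\R^2)$, and then uses the identity ${\mathcal I}_\varepsilon(u)=\tfrac{1}{2}\diff{\mathcal I}_\varepsilon[u](u)-\tfrac{1}{2}{\mathcal S}(u)$ together with $\diff{\mathcal I}_\varepsilon[u_n](u_n)\to 0$ to conclude ${\mathcal T}_\varepsilon(\zeta_n)\to{\mathcal T}_\varepsilon(\zeta_\infty)=c_\varepsilon$, so that the only nonlinearity whose limit must be computed is the cubic one. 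You need some such device --- your proposal as written does not close the argument for $\varepsilon>0$.
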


\begin{figure}
\centering
$\mbox{}$\hspace{3cm}
\includegraphics[width=3cm]{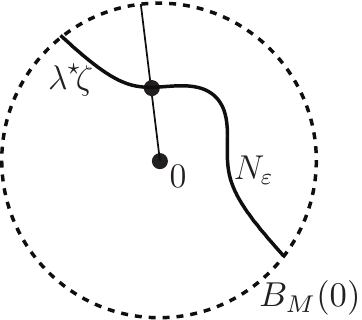}
\hspace{0.6cm}
\includegraphics[width=4cm]{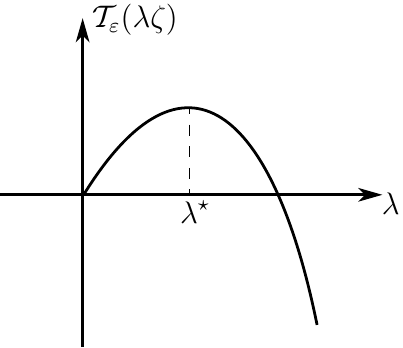}

\caption{Any ray intersects the natural constraint set $N_\varepsilon$
in at most one point and the value of ${\mathcal T}_\varepsilon$ along such a ray attains a strict maximum at this point}
\label{fig:ncs geometry}
\end{figure}

We prove Theorems \ref{thm:variational 1} and \ref{thm:variational 2} for $\varepsilon=0$
and $\varepsilon>0$ separately, in the latter case taking advantage of the relationship ${\mathcal I}_\varepsilon(u)
=\varepsilon^3{\mathcal T}_\varepsilon(\zeta)$, where $u=u_1(\zeta)+u_2(u_1(\zeta))$,
and the fact that
$\tilde{Y}_\varepsilon$ coincides with { $H^s_\varepsilon(\R^2):=\chi_\varepsilon(D)H^s(\R^2)$ for any $s > \frac{3}{2}$}.
The function $u_\infty = u_1(\zeta_\infty) + u_2(u_1(\zeta_\infty))$ given by these theorems
is then a nontrivial critical point of ${\mathcal I}_\varepsilon$, which concludes the proof of Theorem \ref{thm:main result 2}.
The discussion of the case $\varepsilon=0$ does not contribute to this existence proof
but shows that the KP ground states (that is, the ground states of ${\mathcal T}_0$) are characterised in the same
way as the ground states of ${\mathcal T}_\varepsilon$ for $\varepsilon>0$. Using this information, we show that
the ground states of ${\mathcal T}_\varepsilon$ converge to those of ${\mathcal T}_0$ as $\varepsilon \to 0$
in the following sense.

\begin{theorem} \label{thm:convergence of ground states}
Let $c_\varepsilon = \inf_{N_\varepsilon} {\mathcal T}_\varepsilon$.
\begin{itemize}
\item[(i)]
One has that $\lim_{\varepsilon \to 0} c_\varepsilon = c_0$.
\item[(ii)]
Let $\{\varepsilon_n\}$ be a sequence with $\lim_{n \to \infty} \varepsilon_n=0$ and
$\zeta^{\varepsilon_n}$ be a ground state of ${\mathcal T}_{\varepsilon_n}$.
There exists $\{w_n\} \subset \Z^2$ such that
a subsequence of $\{\zeta^{\varepsilon_n}(\cdot+w_n)\}$ converges strongly in $\tilde{Y}$
to a ground state $\zeta^\star$ of ${\mathcal T}_0$.
\end{itemize}
\end{theorem}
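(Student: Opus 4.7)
My plan is to prove parts~(i) and~(ii) simultaneously, exploiting the decomposition $\mathcal{T}_\varepsilon = \mathcal{T}_0 + \varepsilon^{\frac{1}{2}}\mathcal{R}_\varepsilon$ with $\mathcal{R}_\varepsilon$ and its derivatives uniformly bounded on $B_M(0)$, and transferring ground states between $N_\varepsilon$ and $N_0$ by rescaling along rays. The last step is to invoke Theorem~\ref{thm:variational 2} with $\varepsilon=0$---whose conclusion delivers \emph{strong} convergence in $\tilde Y$---on a rescaled version of the given sequence.

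For the upper bound $\limsup_{\varepsilon\to 0} c_\varepsilon \leq c_0$, I fix any ground state $\omega$ of $\mathcal{T}_0$ (provided by Theorem~\ref{thm:variational 2} at $\varepsilon=0$) and project $\omega_\varepsilon := \chi_\varepsilon(\Diff)\omega \in \tilde{Y}_\varepsilon$. Since $\chi_\varepsilon(\Diff)\to\mathrm{Id}$ strongly on $\tilde Y$ we have $\omega_\varepsilon \to \omega$ in $\tilde Y$, and the ray property of $N_\varepsilon$ (Figure~\ref{fig:ncs geometry}) furnishes a unique $t_\varepsilon > 0$ with $t_\varepsilon \omega_\varepsilon \in N_\varepsilon$ for every small $\varepsilon$; a continuity argument, starting from the fact that $t=1$ is the unique positive value with $t\omega \in N_0$, forces $t_\varepsilon \to 1$, whence $c_\varepsilon \leq \mathcal{T}_\varepsilon(t_\varepsilon \omega_\varepsilon) = \mathcal{T}_0(\omega) + o(1) = c_0 + o(1)$. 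Conversely, given ground states $\zeta^{\varepsilon_n}$ with $\varepsilon_n \to 0$, the natural-constraint identity, together with coercivity of the quadratic form $\mathcal{Q}(\zeta) := \frac{1}{2}\int_{\R^2}|(\tilde m(\Diff))^{\frac{1}{2}}\zeta|^2\dx\dy$ (which controls $|\cdot|_{\tilde Y}^2$ when $\beta>\frac{1}{3}$) and the smallness of $\varepsilon_n^{\frac{1}{2}}\mathcal{R}_{\varepsilon_n}$, shows that $\{\zeta^{\varepsilon_n}\}$ is bounded in $\tilde Y$ with norms bounded away from zero. Now the ray property of $N_0$ yields $t_n > 0$ with $\tilde\zeta_n := t_n \zeta^{\varepsilon_n} \in N_0$ and $t_n \to 1$, so that $c_0 \leq \mathcal{T}_0(\tilde\zeta_n) = \mathcal{T}_{\varepsilon_n}(\zeta^{\varepsilon_n}) + o(1) = c_{\varepsilon_n} + o(1)$; together with the upper bound, this proves~(i) and identifies $\{\tilde\zeta_n\}$ as a minimising sequence for $\mathcal{T}_0|_{N_0}$.

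To complete~(ii), I apply Theorem~\ref{thm:variational 2} at $\varepsilon=0$ to $\{\tilde\zeta_n\}$, for which I must verify the Palais--Smale bound $|\mathrm{d}\mathcal{T}_0[\tilde\zeta_n]|_{\tilde Y \to \R} \to 0$. Splitting $\eta \in \tilde Y$ as $\eta = \chi_{\varepsilon_n}(\Diff)\eta + (1-\chi_{\varepsilon_n}(\Diff))\eta$, the first summand is controlled by testing the Euler--Lagrange equation for $\zeta^{\varepsilon_n}$ on $\tilde{Y}_{\varepsilon_n}$ and invoking the $O(\varepsilon_n^{\frac{1}{2}})$ bound on $\mathrm{d}\mathcal{R}_{\varepsilon_n}$, while for the second summand the quadratic contribution vanishes identically by Fourier-support orthogonality and the cubic contribution $\int_{\R^2}(\zeta^{\varepsilon_n})^2(1-\chi_{\varepsilon_n}(\Diff))\eta\dx\dy$ is handled via a Plancherel/convolution estimate, exploiting that $(\zeta^{\varepsilon_n})^2$ has Fourier support in the doubled cone $\supp\chi_{\varepsilon_n} + \supp\chi_{\varepsilon_n}$. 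Theorem~\ref{thm:variational 2} then produces $\{w_n\} \subset \Z^2$ such that a subsequence of $\{\tilde\zeta_n(\cdot+w_n)\}$ converges strongly in $\tilde Y$ to a ground state $\zeta^\star$ of $\mathcal{T}_0$; since $t_n \to 1$, the same translates of $\{\zeta^{\varepsilon_n}\}$ also converge strongly in $\tilde Y$ to $\zeta^\star$, as claimed. The principal obstacle is the Palais--Smale bound, and specifically the cubic term in the complementary direction, which demands a careful Fourier-support analysis to secure its uniform decay in $|\eta|_{\tilde Y}\leq 1$ as $n\to\infty$.
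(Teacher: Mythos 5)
Your argument is correct in substance but routes part of the proof differently from the paper, so let me compare. For the upper bound $\limsup c_\varepsilon\leq c_0$ you do exactly what the paper does (Proposition~\ref{prop:approximate N0} with $\lambda_0=1$). For the lower bound, however, you rescale the ground states onto $N_0$: since $\zeta^{\varepsilon_n}\in N_{\varepsilon_n}$ forces ${\mathcal S}(\zeta^{\varepsilon_n})<0$ and $|\zeta^{\varepsilon_n}|_{\tilde Y}\gtrsim 1$, the identity \eref{eq:dT} gives $t_n=-2{\mathcal Q}(\zeta^{\varepsilon_n})/3{\mathcal S}(\zeta^{\varepsilon_n})=1+\bigO(\varepsilon_n^{1/2})$ and hence $c_0\leq{\mathcal T}_0(t_n\zeta^{\varepsilon_n})=c_{\varepsilon_n}+o(1)$. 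This is more elementary than the paper's route, which obtains the lower bound only \emph{after} running the concentration-compactness machinery of Lemma~\ref{lemma:general convergence} on $\{\zeta^{\varepsilon_n}\}$ viewed directly as a Palais--Smale sequence for ${\mathcal T}_0$; your approach has the advantage of decoupling (i) from (ii) and exhibiting $\{t_n\zeta^{\varepsilon_n}\}$ explicitly as a minimising sequence for ${\mathcal T}_0|_{N_0}$, to which Theorem~\ref{thm:variational 2} (at $\varepsilon=0$, where it yields strong convergence) can be applied. Both proofs ultimately rest on the same two verifications --- nonvanishing via Proposition~\ref{prop:17/6} and $|\diff{\mathcal T}_0[\zeta^{\varepsilon_n}]|_{\tilde Y\to\R}\to 0$ --- and since $t_n\to1$ the passage from $\zeta^{\varepsilon_n}$ to $t_n\zeta^{\varepsilon_n}$ and back at the end is harmless.

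One caution on the step you correctly identify as the principal obstacle. The sumset $\supp\chi_{\varepsilon_n}+\supp\chi_{\varepsilon_n}$ is \emph{not} contained in a cone of comparable aperture: two frequencies in $C_{\varepsilon_n}$ whose first components nearly cancel produce a sum with $|k_2|/|k_1|$ arbitrarily large, so an argument relying on $\widehat{(\zeta^{\varepsilon_n})^2}$ living in a slightly enlarged cone would fail. Fortunately no support information on $(\zeta^{\varepsilon_n})^2$ is needed: on $\R^2\setminus C_{\varepsilon_n}$ one has either $|k_2/k_1|>\delta/\varepsilon_n$ or $|k|>\delta/\varepsilon_n$, and in the latter case $k_2^2/k_1^2+k_1^2\geq\max(2|k_2|,k_1^2)\gtrsim|k|$, so the $\tilde Y$-weight is $\gtrsim\delta/\varepsilon_n$ throughout the complement. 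Hence $|(1-\chi_{\varepsilon_n}(\Diff))\eta|_{L^2}\lesssim\varepsilon_n^{1/2}|\eta|_{\tilde Y}$, and Cauchy--Schwarz together with $|(\zeta^{\varepsilon_n})^2|_{L^2}\lesssim|\zeta^{\varepsilon_n}|_{\tilde Y}^2$ (Proposition~\ref{prop:anisotropic embeddings}) gives $\int_{\R^2}(\zeta^{\varepsilon_n})^2(1-\chi_{\varepsilon_n}(\Diff))\eta\dx\dy=\bigO(\varepsilon_n^{1/2})$ uniformly for $|\eta|_{\tilde Y}\leq1$. With that replacement your verification of the Palais--Smale condition closes, and the proof is complete.
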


Our final result concerns convergence of FDKP-I solitary waves to KP-I solitary waves
and is obtained as a corollary of Theorem \ref{thm:convergence of ground states}.

\begin{theorem}
Let $\{\varepsilon_n\}$ be a sequence with $\lim_{n \to \infty} \varepsilon_n=0$ and
$u^{\varepsilon_n}$ be a critical point of ${\mathcal I}_{\varepsilon_n}$
with ${\mathcal I}_{\varepsilon_n}(u^{\varepsilon_n})=\varepsilon_n^3c_{\varepsilon_n}$,
so that the formula $u^{\varepsilon_n}=u_1(\zeta^{\varepsilon_n}) + u_2(u_1(\zeta^{\varepsilon_n}))$
defines a ground state $\zeta^{\varepsilon_n}$ of ${\mathcal T}_\varepsilon$. There exists
$\{w_n\} \subset \Z^2$ such that a subsequence of
$\{\zeta^{\varepsilon_n}(\cdot+w_n)\}$ converges strongly in $\tilde{Y}$
to a ground state $\zeta^\star$ of ${\mathcal T}_0$.
\end{theorem}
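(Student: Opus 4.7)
The plan is to deduce this corollary almost immediately from Theorem \ref{thm:convergence of ground states}(ii); the only task is to verify that each $\zeta^{\varepsilon_n}$ is in fact a ground state of ${\mathcal T}_{\varepsilon_n}$, after which the stated convergence is delivered by that theorem verbatim.

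First I would recall from the reduction in Section \ref{sec:reduction} that the map $\zeta \mapsto u_1(\zeta)+u_2(u_1(\zeta))$ sets up a correspondence between critical points of ${\mathcal T}_\varepsilon$ on $B_M(0)$ and critical points of ${\mathcal I}_\varepsilon$, under which ${\mathcal I}_\varepsilon(u)=\varepsilon^3{\mathcal T}_\varepsilon(\zeta)$. Since by hypothesis $u^{\varepsilon_n}$ is a critical point of ${\mathcal I}_{\varepsilon_n}$ of the form $u_1(\zeta^{\varepsilon_n})+u_2(u_1(\zeta^{\varepsilon_n}))$, the companion function $\zeta^{\varepsilon_n}$ is a critical point of ${\mathcal T}_{\varepsilon_n}$, and in particular lies in $N_{\varepsilon_n}$. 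Moreover the energy condition ${\mathcal I}_{\varepsilon_n}(u^{\varepsilon_n})=\varepsilon_n^3c_{\varepsilon_n}$ translates into
\[
{\mathcal T}_{\varepsilon_n}(\zeta^{\varepsilon_n})=c_{\varepsilon_n}=\inf_{N_{\varepsilon_n}}{\mathcal T}_{\varepsilon_n}.
\]
Thus $\zeta^{\varepsilon_n}$ attains the infimum of ${\mathcal T}_{\varepsilon_n}$ over its natural constraint set and is therefore a ground state of ${\mathcal T}_{\varepsilon_n}$, as required.

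With this identification in hand I would apply Theorem \ref{thm:convergence of ground states}(ii) directly to the sequence $\{\zeta^{\varepsilon_n}\}$: it produces translations $\{w_n\}\subset\Z^2$ and a subsequence of $\{\zeta^{\varepsilon_n}(\cdot+w_n)\}$ converging strongly in $\tilde{Y}$ to a ground state $\zeta^\star$ of ${\mathcal T}_0$, which is precisely the assertion of the corollary.

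The only point that requires any care, and indeed the main obstacle, is that the ground-state property of $\zeta^{\varepsilon_n}$ is not assumed but is deduced from the value of the original Hamiltonian ${\mathcal I}_{\varepsilon_n}$; this hinges on having the exact identity ${\mathcal I}_\varepsilon=\varepsilon^3{\mathcal T}_\varepsilon$ along the reduction, together with the observation that any critical point of ${\mathcal T}_\varepsilon$ automatically belongs to $N_\varepsilon$. Once these two ingredients from Sections \ref{sec:reduction} and \ref{sec:ground states} are invoked, the result reduces to a direct application of Theorem \ref{thm:convergence of ground states}(ii) and no further estimates are needed.
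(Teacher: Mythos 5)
Your proposal is correct and follows essentially the same route as the paper, which records this theorem as a direct consequence of the preceding corollary (Theorem \ref{thm:convergence of ground states}(ii)); the identification of $\zeta^{\varepsilon_n}$ as a ground state via the correspondence of critical points and the identity ${\mathcal I}_\varepsilon(u)=\varepsilon^3{\mathcal T}_\varepsilon(\zeta)$ is exactly the content of the ``so that'' clause in the statement, which you correctly justify.
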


Defining
$u^\star_{\varepsilon}(x,y) = \varepsilon^2 \zeta^\star(\varepsilon x,\varepsilon^2 y)$,
so that $u^\star_\varepsilon$ is a KP solitary wave with wave speed $\varepsilon^2$,
we find that the difference $u^{\varepsilon_n} - u_{\varepsilon_n}^\star$ converges to zero
in $(\tilde{Y},|\cdot|_{\varepsilon_n})$ and in $H^{\frac{1}{2}}(\R^2)$ (see 
Remark~\ref{remark:u-convergence}). Although these functions
are small, their difference converges to zero faster than the functions themselves), so that we also
have convergence with respect to the original variables.

\begin{remark}
The results presented in this paper apply with straightforward modifications to the generalised FDKP-I
and KP-I equations obtained by replacing the nonlinear term $(u^2)_x$ by $(u^p)_x$ with
$2 \leq p < 5$ (see Proposition~\ref{prop:anisotropic embeddings}). The proof of the counterpart to Theorem \ref{thm:variational 1}
with $\varepsilon=0$ also yields a concise variational existence theory for gKP-I solitary waves as
an alternative to those already available in the literature (de Bouard \& Saut \cite{deBouardSaut97b},
Pankov \& Pfl\"{u}ger \cite{PankovPflueger99,PankovPflueger00}, Willem \cite[Ch 7]{Willem},
Wang, Ablowitz \& Segur \cite{WangAblowitzSegur94} and
Liu \& Wang \cite{LiuWang97}).
\end{remark}

\section{Function spaces}

In this section we introduce the function spaces (and basic properties thereof) which are used in the
variational reduction and existence theory in Sections \ref{sec:reduction} and \ref{sec:existence} below. For notational simplicity we generally omit the exact value of $\tfrac{1}{2}(\beta-\tfrac{1}{3})$ and treat it as being of unit size (without this simplification the term $k_1^2$ in the norm for
$\tilde{Y}$ is multiplied by $\tfrac{1}{2}(\beta-\tfrac{1}{3})$, which does not affect the
proof in any way.) Examining the quadratic parts of the variational functionals
\begin{eqnarray*}
\mathcal{I}(u) & = & {\mathcal E}(u) - c{\mathcal M}(u) \\
& = &  \frac{1}{2}\int_{\R^2} \left( (m(\Diff)^{\frac{1}{2}} u)^2 - c u^2 \right) \dx \dy + \frac{1}{3} \int_{\R^2} u^3 \dx \dy
\end{eqnarray*}
and
\[{\mathcal T}_0(\zeta)=  \frac{1}{2} \int_{\R^2} | (\tilde{m}(\Diff))^{\frac{1}{2}} \zeta |^2 \dx \dy + \frac{1}{3} \int_{\R^2} \zeta^3 \dx\dy\]
for the steady FDKP-I and KP-I equations \eref{eq:steady FDKP} and \eref{eq:normalised steady KP again}
shows that their natural energy spaces are the completions $Y$ and $\tilde{Y}$ of 
\[
\partial_{x} \Schwartz(\R^2) = \{ \partial_x f \colon f \in \Schwartz(\R^2)\},
\]
where $\Schwartz(\R^2)$ is the Schwartz space of rapidly decaying smooth functions,
with respect to the norms
\begin{equation}\label{eq:tildeY}
|u|_{Y}^2 =  \int_{\R^2} \left( 1 + \left|\frac{k_2}{k_1}\right| + \frac{|k|^{\frac{3}{2}}}{|k_1|^{\:\:\,}} \right) | \hat u(k)|^2 \, \diff k,
\end{equation}
and
\begin{equation}\label{eq:Y}
|u|_{\tilde Y}^2 = \int_{\R^2} \left( 1 +  \frac{k_2^2}{k_1^2} + k_1^2 \right) | \hat u(k)|^2 \, \diff k.
\end{equation}
Here ${\mathcal F}: u \mapsto \hat{u}$ denotes the unitary Fourier transform on $\Schwartz(\R^2)$. (In defining $|\cdot|_Y$ we have used the
fact that
\begin{eqnarray*}
& & \parbox{5cm}{$\displaystyle m(k) \simeq 1+\frac{|k_2|}{|k_1|}$,} |k| \leq \delta, \qquad \nonumber \\
& & \parbox{5cm}{$\displaystyle m(k) \simeq  |k_1|^\frac{1}{2}+\frac{|k_2|^\frac{3}{2}}{|k_1|}$,} |k| \geq \delta,
\end{eqnarray*}
for { any} $\delta>0$.)
Although the largest space continuously embedded into both $Y$ and $\tilde Y$ is defined by the weight $1 + k_2^2 k_1^{-2} + |k|$, we work in the smaller space $X$ defined as the completion of $\partial_{x} \Schwartz(\R^2)$ with respect to the norm
\begin{equation}\label{eq:X}
|u|_{X}^2 = \int_{\R^2} \left( 1 + \frac{k_2^2}{k_1^2} + \frac{k_2^4}{k_1^2} + |k|^{2s}\right) |\hat u(k)|^2\,\diff k,
\end{equation}
where the Sobolev index $s > \frac{3}{2}$ is fixed. Finally, we introduce the completion $Z$ of $\partial_{x} \Schwartz(\R^2)$ with respect to the norm
\begin{equation}\label{eq:Z}
|u|_{Z}^2 = \int_{\R^2} \left( 1 + |k| + k_1^2|k|^{2s-3}\right) |\hat u(k)|^2\,\diff k;
\end{equation}
it follows from Lemma \ref{lemma:global embeddings} and Remark \ref{rem:m iso} that ${ Z = m(D)X}$.

%\begin{remark}
%The space $X$ coincides with $\{u \in L^2(\R^2): |u|_X < \infty\}$ (with
%corresponding results for $Y$, $\tilde{Y}$ and $Z$).
%\end{remark}

\begin{lemma}\label{lemma:global embeddings} \hspace{2cm}
\begin{itemize}
\item[(i)]
One has the continuous embeddings
\[\afl
X \hookrightarrow \tilde Y \hookrightarrow Y \hookrightarrow L^2(\R^2), \quad H^{s-\frac{1}{2}}(\R^2) \hookrightarrow Z \hookrightarrow L^2(\R^2), \quad X \hookrightarrow H^s(\R^2),
\]
and in particular $X \hookrightarrow \mathrm{BC}(\R^2)$, the space of bounded, continuous functions on $\R^2$.
\item[(ii)]
The Fourier multiplier $m(\Diff)$ maps $X$ continuously into $Z$.
\end{itemize}
\end{lemma}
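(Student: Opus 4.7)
Every statement in the lemma reduces to a pointwise inequality between Fourier symbols, which is then verified by a case analysis splitting according to $|k|$ small/large and $|k_2|/|k_1|$ small/large. The plan is to treat (i) first by comparing the four weights defining $Y$, $\tilde Y$, $Z$, $X$ and the standard Sobolev weight $(1+|k|^2)^s$, and then to derive (ii) from the symbolic asymptotics of $m$ recorded after \eqref{eq:Y}.

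For part (i) the easy embeddings $Y \hookrightarrow L^2(\R^2)$ and $Z \hookrightarrow L^2(\R^2)$ are trivial since each weight is bounded below by $1$. The embedding $X \hookrightarrow H^s(\R^2)$ follows from $(1+|k|^2)^s \lesssim 1+|k|^{2s}$, and $H^{s-\frac{1}{2}}(\R^2)\hookrightarrow Z$ follows from $|k|+k_1^2|k|^{2s-3}\leq |k|+|k|^{2s-1}\lesssim (1+|k|^2)^{s-\frac{1}{2}}$, using $s>\tfrac{3}{2}$. The continuous embedding $X \hookrightarrow \mathrm{BC}(\R^2)$ is then the classical Sobolev embedding $H^s(\R^2)\hookrightarrow \mathrm{BC}(\R^2)$, $s > 1$. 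For $X \hookrightarrow \tilde Y$ it suffices to observe $k_1^2 \leq |k|^2 \leq 1 + |k|^{2s}$ since $s\geq 1$. The embedding $\tilde Y \hookrightarrow Y$ is the one subtle step: for the term $|k_2|/|k_1|$ we use $|k_2|/|k_1|\leq \tfrac{1}{2}(1+k_2^2/k_1^2)$; for $|k|^{3/2}/|k_1|$ I would distinguish $|k_2|\leq|k_1|$ (where $|k|^{3/2}/|k_1|\lesssim|k_1|^{1/2}\lesssim 1+k_1^2$) from $|k_2|>|k_1|$ (where $|k|^{3/2}/|k_1|\lesssim|k_2|^{3/2}/|k_1|=(k_2^2/k_1^2)^{3/4}|k_1|^{1/2}$ and Young's inequality yields the bound by $k_2^2/k_1^2 + k_1^2$).

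For part (ii) I would first record the two frequency regimes (which are stated just after \eqref{eq:Y}): $m(k)^2 \simeq 1+k_2^2/k_1^2$ for $|k|\leq\delta$, and $m(k)^2 \simeq |k_1|+|k_2|^3/k_1^2$ for $|k|\geq\delta$. The goal is then the pointwise estimate
\[
m(k)^2 \bigl(1+|k|+k_1^2|k|^{2s-3}\bigr) \;\lesssim\; 1+\frac{k_2^2}{k_1^2}+\frac{k_2^4}{k_1^2}+|k|^{2s}.
\]
For $|k|\leq\delta$ the $Z$-weight is $O(1)$ while $m^2$ is controlled by $1+k_2^2/k_1^2$, so the bound is immediate. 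For $|k|\geq\delta$ I would split once more on $|k_2|$ versus $|k_1|$: when $|k_2|\leq|k_1|$ one has $|k|\simeq|k_1|$ and $m^2\lesssim|k_1|$, and the left-hand side is $\lesssim|k_1|\cdot k_1^2|k|^{2s-3}\lesssim|k|^{2s}$; when $|k_2|>|k_1|$ one has $|k|\simeq|k_2|$ and $m^2\lesssim|k_2|^3/k_1^2$, whence
\[
m(k)^2\bigl(|k|+k_1^2|k|^{2s-3}\bigr)\lesssim \frac{|k_2|^4}{k_1^2}+|k_2|^{2s}\lesssim \frac{k_2^4}{k_1^2}+|k|^{2s}.
\]
Multiplying by $|\hat u(k)|^2$ and integrating gives $|m(\Diff)u|_Z\lesssim|u|_X$.

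The only step I expect to require any care is the comparison $\tilde Y \hookrightarrow Y$ (where the fractional weight $|k|^{3/2}/|k_1|$ must be tamed by the quadratic weights of $\tilde Y$) and, in (ii), the high-frequency regime with $|k_2|>|k_1|$, which is precisely where the anisotropic $k_2^4/k_1^2$ term in the $X$-norm is needed; everything else is a direct symbol comparison.
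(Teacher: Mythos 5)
Your proposal is correct and follows essentially the same route as the paper: every embedding is reduced to a pointwise symbol comparison, with Young's inequality taming the cross term $|k_2|^{3/2}/|k_1|$ in $\tilde Y\hookrightarrow Y$ and a $|k|\lessgtr\delta$ split together with the asymptotics of $m$ giving part (ii). Your extra case distinction $|k_2|\lessgtr|k_1|$ is only a bookkeeping variant of the paper's one-line bounds $|k|^{3/2}\simeq|k_1|^{3/2}+|k_2|^{3/2}$ and $|k|^4/k_1^2\lesssim k_2^4/k_1^2+k_1^2$, so nothing of substance differs.
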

\proof
(i) The first and second chain of embeddings follow from the estimates
\begin{eqnarray*}
1 &\leq & 1 + \left|\frac{k_2}{k_1}\right| + \frac{|k_2|^{\frac{3}{2}}}{|k_1|} + |k_1|^{\frac{1}{2}}\nonumber \\
& \lesssim & 1 + \frac{k_2^2}{k_1^2} + k_1^2 + \frac{|k_2|^{\frac{3}{2}}}{|k_1|}\nonumber \\
& \lesssim & 1 + \frac{k_2^2}{k_1^2} + k_1^2\nonumber \\
& \leq 1 & + \frac{k_2^2}{k_1^2} + \frac{k_2^4}{k_1^2} + |k|^{2s}
\end{eqnarray*}
(in the third step we multiply and divide the last term by $|k_1|^{\frac{1}{2}}$ and apply Young's inequality with $\frac{1}{4} + \frac{3}{4} = 1$), and
\[
1 \leq 1 + |k| + k_1^2 |k|^{2s-3} \lesssim 1 + |k|^{2s-1},
\]
while the third follows from the estimate
\[1+|k|^{2s}
\leq 1 + \frac{k_2^2}{k_1^2} + \frac{k_2^4}{k_1^2} + |k|^{2s}.\]
The embedding of $X$ into $\mathrm{BC}(\R^2)$ follows from $H^s(\R^2) \hookrightarrow \mathrm{BC}(\R^2)$ (because
$s > \frac{3}{2}$).

(ii) Observe that
\[(1+|k|+k_1^2|k|^{2s-3})m(k)^2 \lesssim 1+\frac{k_2^2}{k_1^2}\]
for $|k| \leq \delta$ and
\begin{eqnarray*}
(1+|k|+k_1^2|k|^{2s-3})m(k)^2 & \lesssim & (1+|k|+k_1^2|k|^{2s-3})|k|\left(1+\frac{k_2^2}{k_1^2}\right) \nonumber \\
& = & \frac{|k|^4}{k_1^2} + |k|^{2s} \nonumber \\
& \lesssim & \frac{k_2^4}{k_1^2}+k_1^2+|k|^{2s} \nonumber \\
& \lesssim & \frac{k_2^4}{k_1^2}+|k|^{2s}
\end{eqnarray*}
for $|k| \geq \delta$ (because $(1+\beta |k|^2)|k|^{-1}\tanh|k| \gtrsim |k|$ for $|k| \geq \delta$), so that\linebreak
$|m(\Diff)(\cdot)|_Z^2 \lesssim |\cdot|_X^2$.
\qed

The space $\tilde{Y}$ admits a local representation: the map
$w \mapsto u:=w_x$ is an isometric isomorphism $A \rightarrow \tilde{Y}$,
where $A$ is the completion of $\partial_x \Schwartz(\R^2)$ with respect to the norm
\[
| w |_A^2 =
\int_{\R^2} \left( w_x^2+w_y^2 + w_{xx}^2 \right) \, \diff x \, \diff y.
\]
In this spirit we can also define the localised space $A(Q_j)$, where
\[
Q_j = \{(x,y) \in \R^2: |x-j_1| < \tfrac{1}{2}, |y-j_2| < \tfrac{1}{2}\}
\] 
is the unit cube centered at the point $j = (j_1,j_2) \in \Z^2$, as the completion
of $\partial_x C^\infty(\overline{Q}_j)$ with respect to the norm
\[
| w |_{A(Q_j)}^2 = \int_{Q_j} (w_x^2+w_y^2 + w_{xx}^2 ) \, \diff x \, \diff y,
\]
and $\tilde{Y}(Q_j) = \partial_x A(Q_j)$ with 
$| u |_{\tilde Y(Q_j)}=|w|_{A(Q_j)}$.
Note that $u|_{Q_j}$ belongs to $\tilde Y(Q_j)$ for each $u \in \tilde{Y}$ and
\[|u|_{\tilde Y}^2 = \sum_{j \in \Z^2} |u|_{\tilde Y(Q_j)}^2.\]

\begin{proposition}\label{prop:anisotropic embeddings}
The space $\tilde{Y}$ is
\begin{itemize}
\item[(i)]
continuously embedded in $L^p(\R^2)$ for $2 \leq p \leq 6$,
\item[(ii)]
compactly embedded in $L^p_\mathrm{loc}(\R^2)$ for $2 \leq p < 6$.
\end{itemize}
Furthermore, the space $\tilde{Y}(Q_j)$ is continuously embedded in $L^p(Q_j)$
for $2 \leq p \leq 6$.
\end{proposition}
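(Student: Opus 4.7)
The plan is to pass to the potential $w = \partial_x^{-1} u$ via the isometric isomorphism $w \mapsto w_x$ from $A$ to $\tilde{Y}$ (and its local counterpart $A(Q_j) \to \tilde{Y}(Q_j)$), so that the condition $u \in \tilde{Y}$ becomes $w_x, w_y, w_{xx} \in L^2(\R^2)$, a genuinely anisotropic Sobolev regularity for $w$. The heart of part (i) is the scale-invariant anisotropic Gagliardo--Nirenberg inequality
\[
\|w_x\|_{L^6(\R^2)}^2 \lesssim \|w_{xx}\|_{L^2(\R^2)}\,\|w_y\|_{L^2(\R^2)},
\]
which is invariant under the KP scaling $(x,y) \mapsto (\lambda x, \lambda^2 y)$; I would prove it by a Littlewood--Paley decomposition adapted to that scaling, or invoke the Besov--Il'in--Nikolski theory directly. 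The endpoint $p = 2$ is immediate from the pointwise bound $1 \leq 1 + k_2^2/k_1^2 + k_1^2$ on the weight, and the remaining $p \in (2,6)$ follow by H\"older interpolation between the $L^2$ and $L^6$ estimates.

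For the local embedding $\tilde{Y}(Q_j) \hookrightarrow L^p(Q_j)$ I would construct a bounded linear extension operator $A(Q_j) \to A$ --- for instance by successive reflections across the sides of $Q_j$ followed by a smooth cut-off, chosen to preserve each of the three $L^2$ norms constituting $|\cdot|_{A(Q_j)}$ --- so that restricting the global estimate to $Q_j$ delivers a constant independent of $j \in \Z^2$. For the compact embedding (ii), covering a bounded $\Omega \subset \R^2$ by finitely many cubes $Q_j$ reduces the task to proving $\tilde{Y}(Q_j) \hookrightarrow\hookrightarrow L^2(Q_j)$, since interpolation with the $L^6(Q_j)$ estimate then extends compactness to every $p \in [2,6)$. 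A bounded sequence $\{u_n\} \subset \tilde{Y}(Q_j)$ corresponds to $\{w_n\}$ bounded in $H^1(Q_j)$ (modulo constants) with $w_{n,xx}$ additionally bounded in $L^2(Q_j)$; Rellich--Kondrachov furnishes a subsequence with $w_n \to w$ in $L^2(Q_j)$, and an Aubin--Lions-style argument in the $x$-variable, exploiting the extra $x$-regularity, upgrades this to $w_{n,x} \to w_x$ in $L^2(Q_j)$, which is the desired $L^2(Q_j)$-convergence of $u_n$.

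The main obstacle is the anisotropic $L^6$ inequality itself: because $\tilde{Y}$ controls only the nonlocal fractional derivative $\partial_x^{-1} u_y$ rather than $u_y$, standard isotropic Sobolev embeddings do not apply, and one must employ harmonic-analytic techniques adapted specifically to the KP scaling. The extension and compactness arguments, while requiring some care with the anisotropy, are fairly routine once the Gagliardo--Nirenberg estimate is in hand.
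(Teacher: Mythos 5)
Your overall route is the paper's: the paper's proof is essentially a two-line citation, invoking Besov, Il'in \& Nikolskii (Thm 15.7) applied to the local representations $A$, $A(Q_j)$ for part (i) and the $Q_j$-statement, and obtaining part (ii) by interpolating between the compact inclusion $\tilde Y \subset L^2_{\mathrm{loc}}$ (quoted from de Bouard \& Saut) and the continuous inclusion into $L^6_{\mathrm{loc}}$ from (i). Your interpolation scheme for $2<p<6$, your extension-operator remark for the $j$-uniformity, and your localisation-plus-interpolation treatment of compactness all match this; your sketch of the $L^2(Q_j)$-compactness can indeed be closed by the elementary estimate $\|v_x\|_{L^2}^2\lesssim\|v\|_{L^2}\|v_{xx}\|_{L^2}$ applied (with cut-offs) to $v=w_n-w$, which is cleaner than an Aubin--Lions argument.

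However, the inequality you single out as ``the heart of part (i)'' is false as stated. The estimate
\[
\|w_x\|_{L^6(\R^2)}^2\lesssim\|w_{xx}\|_{L^2(\R^2)}\,\|w_y\|_{L^2(\R^2)}
\]
is invariant only under the one-parameter KP scaling $(x,y)\mapsto(\lambda x,\lambda^2 y)$, but a two-term multiplicative Gagliardo--Nirenberg inequality must be invariant under \emph{independent} rescalings of $x$ and $y$. Testing with $u(x,y)=w_x(x,\mu y)$ one finds that the right-hand side is independent of $\mu$ (the factors $\mu^{-1/2}$ and $\mu^{+1/2}$ cancel) while the left-hand side grows like $\mu^{-1/3}$ as $\mu\to0$, so no such constant exists. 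Matching both scaling exponents forces
\[
\|u\|_{L^6}\lesssim\|u_x\|_{L^2}^{2/3}\,\|\partial_x^{-1}u_y\|_{L^2}^{1/3},
\qquad\text{i.e.}\qquad
\|w_x\|_{L^6}^3\lesssim\|w_{xx}\|_{L^2}^2\,\|w_y\|_{L^2},
\]
which is the correct anisotropic Sobolev inequality (and is what Besov--Il'in--Nikolskii delivers); combined with Young's inequality it still gives $\|u\|_{L^6}\lesssim|u|_{\tilde Y}$, so the embedding is unaffected once the exponents are repaired. Since you also offer the fallback of citing the Besov--Il'in--Nikolskii theorem directly --- which is exactly what the paper does --- the proof goes through along that route, but as written the displayed inequality would not survive refereeing.
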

\begin{proof}
Part (i) and the assertion concerning $\tilde{Y}(Q_j)$ follow from
Besov, Ilin \& Nikolskii \cite[Thm 15.7]{BesovIlinNikolskii}
(applied to the local representations).
Part (ii) is an interpolation result between $L^2_\mathrm{loc}(\R^2)$
and $L^6_\mathrm{loc}(\R^2)$; de Bouard \& Saut
\cite[Lemma 3.3]{deBouardSaut97b} show that the inclusion 
$\tilde{Y} \subset L^2_\mathrm{loc}(\R^2)$ is compact, and the inclusion 
$\tilde{Y} \subset L^6_\mathrm{loc}(\R^2)$ is continuous by (i).\qed
\end{proof}

Our next results concern the functional ${\mathcal I}$ and its { Euler--Lagrange} equation.

\begin{corollary} \label{cor:spaces corollary} \hspace{2cm}
\begin{itemize}
\item[(i)]
The formula $u \mapsto -cu+m(\Diff)u { +} u^2$ maps $X$ smoothly into $Z$.
\item[(ii)]
The functional ${\mathcal I}$ maps $X$ smoothly into $\R$  and its critical points are precisely the
homoclinic solutions (in $X$) of equation \eref{eq:steady FDKP}.
\end{itemize}
\end{corollary}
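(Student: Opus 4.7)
The plan is to combine the embeddings in Lemma \ref{lemma:global embeddings} with the fact that $H^s(\R^2)$ is a Banach algebra when $s > 1$, which holds for our fixed $s > \tfrac{3}{2}$.

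For part (i), I would handle each summand of $-cu + m(\Diff)u + u^2$ separately. The term $m(\Diff) u$ lies in $Z$ by Lemma \ref{lemma:global embeddings}(ii), and the linear term $-cu$ by the chain $X \hookrightarrow H^s \hookrightarrow H^{s-\frac{1}{2}} \hookrightarrow Z$, whose outer two embeddings come from Lemma \ref{lemma:global embeddings}(i). For the quadratic term, the Banach-algebra property shows that $u \mapsto u^2$ is smooth from $H^s(\R^2)$ into itself; pre- and post-composing with $X \hookrightarrow H^s$ and $H^s \hookrightarrow H^{s-\frac{1}{2}} \hookrightarrow Z$ then yields a smooth map $X \to Z$. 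The linear pieces are trivially smooth.

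For the smoothness assertion in part (ii), I would decompose ${\mathcal I}$ into two quadratic forms and a cubic form. Using the self-adjointness of $m(\Diff)$ on $L^2(\R^2)$, the first quadratic form reads $\tfrac{1}{2}\langle u, m(\Diff) u\rangle_{L^2}$, which is a continuous bilinear form on $X$ since $m(\Diff)\colon X \to Z \hookrightarrow L^2$ is bounded (by (i) and Lemma \ref{lemma:global embeddings}(i)) and $X \hookrightarrow L^2$. The mass term $-\tfrac{c}{2}\langle u,u\rangle_{L^2}$ is continuous by the same embedding, and the cubic form is continuous trilinear because $X \hookrightarrow \tilde{Y} \hookrightarrow L^3(\R^2)$ by Proposition~\ref{prop:anisotropic embeddings}(i). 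A polynomial functional built from continuous multilinear forms is automatically smooth, so ${\mathcal I} \in C^\infty(X,\R)$.

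For the critical-point characterisation, I would unpack the differential as
\[
\diff{\mathcal I}[u](v) = \int_{\R^2} f\, v \, \dx\,\dy, \qquad f := -cu + m(\Diff)u + u^2, \qquad v \in X,
\]
noting via part (i) that $f \in Z \hookrightarrow L^2(\R^2)$. If $\diff{\mathcal I}[u]$ vanishes, then in particular $\int_{\R^2} f v\,\dx\,\dy = 0$ for every $v \in \partial_x \Schwartz(\R^2) \subset X$; Plancherel's theorem then identifies $k_1 \hat f(k)$ with zero almost everywhere and hence $f = 0$ in $L^2(\R^2)$, while the converse implication is immediate. The only mildly delicate point is that test functions in $\partial_x \Schwartz(\R^2)$ have transforms vanishing on $\{k_1 = 0\}$, but that set is null and $f$ is an honest $L^2$ function, so no information about $f$ is lost.
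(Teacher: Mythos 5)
Your proposal is correct, and it follows exactly the route the paper intends: the corollary is stated without proof as a direct consequence of Lemma~\ref{lemma:global embeddings} (the embeddings $X \hookrightarrow H^s(\R^2)$, $H^{s-\frac{1}{2}}(\R^2)\hookrightarrow Z \hookrightarrow L^2(\R^2)$ and the mapping property of $m(\Diff)$) together with the algebra property of $H^s(\R^2)$ and $X \hookrightarrow \tilde Y \hookrightarrow L^3(\R^2)$, which is precisely how you assemble it. Your handling of the test-function class $\partial_x\Schwartz(\R^2)$ in the Euler--Lagrange identification, noting that $\{k_1=0\}$ is a null set, is the right (and only mildly nontrivial) point.
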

\begin{proposition}
The functional ${\mathcal I}$ has no critical points that belong to
$L^1(\R^2) \cap L^\infty(\R^2)$. In particular, all bounded homoclinic solutions
of \eref{eq:steady FDKP} in $X$ have polynomial decay rate.
\end{proposition}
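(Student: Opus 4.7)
My plan is to derive a contradiction by Fourier-transforming the Euler--Lagrange equation and exploiting the fact that the anisotropic factor $(1+2k_2^2/k_1^2)^{1/2}$ in $m$ makes the symbol discontinuous at the origin. Suppose, for contradiction, that $u \in X$ is a nontrivial critical point of $\mathcal I$ lying in $L^1(\R^2)\cap L^\infty(\R^2)$. By Corollary \ref{cor:spaces corollary}(ii), $u$ solves $m(\Diff)u - cu + u^2 = 0$, which in Fourier variables reads
\[
(m(k)-c)\hat u(k) = -\widehat{u^2}(k).
\]
Since $u\in L^1(\R^2)$, $\hat u$ is continuous on $\R^2$; since $u\in L^1\cap L^\infty$, we have $u^2 \in L^1$, so $\widehat{u^2}$ is continuous and $\widehat{u^2}(0)=\int_{\R^2}u^2\dx\dy>0$.

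The crucial step is to approach the origin along the ray $k=(k_1,a k_1)$, $k_1 \to 0^+$, for an arbitrary $a\in\R$. Because $\tanh|k|/|k| \to 1$ and $1+\beta|k|^2 \to 1$ in this limit, a short computation gives
\[
m(k_1,a k_1) \;\longrightarrow\; (1+2a^2)^{1/2}.
\]
Passing to the limit in the Fourier-transformed equation and using continuity of $\hat u$ and $\widehat{u^2}$, I obtain
\[
\bigl((1+2a^2)^{1/2}-c\bigr)\,\hat u(0) \;=\; -\int_{\R^2} u^2\dx\dy,
\]
which must hold for every $a \in \R$. The coefficient on the left attains infinitely many distinct, strictly positive values as $a$ varies over $\R$, so this one-parameter family of identities forces $\hat u(0)=0$ and hence $\int_{\R^2} u^2 = 0$, contradicting $u \neq 0$.

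For the second assertion, every homoclinic solution of \eref{eq:steady FDKP} in $X$ is bounded thanks to the embedding $X \hookrightarrow \mathrm{BC}(\R^2)$ in Lemma \ref{lemma:global embeddings}(i), so ``bounded'' imposes no extra restriction. If such a solution decayed faster than every inverse polynomial---in particular, exponentially---it would lie in $L^1(\R^2) \cap L^\infty(\R^2)$, which is impossible by the first part; hence its decay can be at most polynomial. The only delicate point in the argument is the exploitation of the directional singularity of $m$ at $k=0$, which turns the single symbol equation into a one-parameter family of pointwise identities and thereby kills $\hat u(0)$ and $\int_{\R^2} u^2$ simultaneously; all other steps are routine Fourier analysis.
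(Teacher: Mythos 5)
Your proof is correct and follows essentially the same route as the paper: Fourier-transform the Euler--Lagrange equation, use $u,u^2\in L^1(\R^2)$ to get continuity of $\hat u$ and $\widehat{u^2}$, and exploit the directional behaviour of $m$ at the origin to reach a contradiction. The only difference is cosmetic --- the paper tests the identity along the parabola $k_1=k_2^2$ where $m$ blows up, whereas you test it along the rays $k_2=ak_1$ where $m$ has distinct finite limits; your variant is, if anything, slightly more self-contained since it disposes of the cases $\hat u(0)\neq 0$ and $\hat u(0)=0$ in a single stroke.
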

\proof
Suppose that $u \in L^1(\R^2) \cap L^\infty(\R^2)$ is a critical point of ${\mathcal I}$,
so that $u^2$, $m(D)u \in L^2(\R^2)$ and
\begin{equation}
m(k)\hat{u}(k) = c\hat{u}(k) - \widehat{u^2}(k). \label{eq:decay rate contra}
\end{equation}
Furthermore $\hat u$ and $\widehat{u^2}$ are both continuous (since
$u, u^2 \in L^1(\R^2)$), so that $m(k)\hat{u}(k)$ is also continuous. Recall that
\[m(k) \simeq 1+\frac{|k_2|}{|k_1|}, \qquad |k| \leq \delta,\]
so that $m(k)\hat{u}(k)$ is unbounded along a sequence $\{k_j\}$ fulfilling $k_{1,j} = k_{2,j}^2 \to 0$
as $j \to \infty$. This observation contradicts \eref{eq:decay rate contra}.
\qed
\begin{proposition} \label{prop:FDKP op weak}
The formula $u \mapsto -cu+m(\Diff)u  { +} u^2$  { defines} a weakly
continuous mapping $X \to Z$.
\end{proposition}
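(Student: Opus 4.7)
The plan is to split the operator into its linear and quadratic parts and treat them separately. The linear part $u \mapsto -cu + m(\Diff)u$ is a bounded linear map from $X$ to $Z$ by Lemma~\ref{lemma:global embeddings}(ii), and any bounded linear operator between Banach spaces is weakly continuous. It therefore suffices to prove that the quadratic nonlinearity $N \colon u \mapsto u^2$ is weakly continuous from $X$ to $Z$.

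To this end, suppose $\{u_n\} \subset X$ satisfies $u_n \rightharpoonup u$ in $X$. Then $\{u_n\}$ is norm-bounded in $X$, hence in $H^s(\R^2)$ by Lemma~\ref{lemma:global embeddings}(i). Since $s > \tfrac{3}{2}$ the space $H^s(\R^2)$ embeds into $L^\infty(\R^2)$ and is a Banach algebra, so $\{u_n^2\}$ is uniformly bounded in $H^s(\R^2)$, and therefore in $Z$ in view of the embedding $H^{s-\frac{1}{2}}(\R^2) \hookrightarrow Z$. Combining $X \hookrightarrow \tilde{Y}$ with the compact embedding of $\tilde{Y}$ into $L^4_{\mathrm{loc}}(\R^2)$ from Proposition~\ref{prop:anisotropic embeddings}(ii) gives $u_n \to u$ strongly in $L^4_{\mathrm{loc}}(\R^2)$; together with the uniform $L^\infty$-bound this yields $u_n^2 \to u^2$ in $L^2_{\mathrm{loc}}(\R^2)$, and \emph{a fortiori} in $\mathcal{D}'(\R^2)$.

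The argument is closed by a standard subsequence trick. Given any subsequence of $\{u_n^2\}$, the uniform $Z$-bound together with reflexivity of the Hilbert space $Z$ produces a further subsequence converging weakly in $Z$ to some $v \in Z$; the continuous embeddings $Z \hookrightarrow L^2(\R^2) \hookrightarrow \mathcal{D}'(\R^2)$ propagate this to weak convergence in $\mathcal{D}'(\R^2)$, and uniqueness of distributional limits forces $v = u^2$. Hence every subsequence of $\{u_n^2\}$ admits a further subsequence converging weakly in $Z$ to the common limit $u^2$, and consequently $u_n^2 \rightharpoonup u^2$ in $Z$, completing the proof. The only real technical point is the uniform $Z$-estimate on $\{u_n^2\}$, which is secured via the $H^s$-algebra property; once this is available, the anisotropic structure of $X$ and $Z$ enters only through the linear embedding $H^{s-\frac{1}{2}}(\R^2) \hookrightarrow Z$ and the other embeddings collected earlier in this section.
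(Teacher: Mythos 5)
Your proof is correct and follows essentially the same route as the paper: weak convergence in $X$ gives boundedness in $H^s(\R^2)$ and (via the compact local embedding of Proposition~\ref{prop:anisotropic embeddings}) strong convergence in $L^4_{\mathrm{loc}}(\R^2)$, whence $u_n^2\to u^2$ in the sense of distributions and, by boundedness in $Z$, weakly in $Z$. You merely spell out the uniform bound (the $H^s$-algebra property), the trivial linear part, and the subsequence/uniqueness-of-limits step that the paper leaves implicit.
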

\proof Suppose that
$\{u_n\}$ converges weakly to $u$ in $X$ and hence weakly in $H^s(\R^2)$
and strongly in $L^4_\mathrm{loc}(\R^2)$.
It follows that $\langle u_n^2, \phi \rangle_{L^2}$ converges to 
$\langle u^2,\phi \rangle_{L^2}$ for each $\phi \in C_0^\infty(\R^2)$, so that
$\{u_n^2\}$ converges weakly to $u^2$ in $H^k(\R^2)$ for each integer $k \leq s$
and hence weakly in $Z$.\qed

We decompose $u \in L^2(\R^2)$ into the sum
of functions $u_1$ and $u_2$ whose spectra are supported in the region
\begin{equation}\label{eq:C}
C = \left\{ k \in \R^2 \colon |k| \leq \delta, \tfrac{|k_2|}{|k_1|} \leq \delta\right\}
\end{equation}
and its complement (see Figure \ref{fig:bow tie}) by writing
\[u_1 = \chi(\Diff)u, \qquad u_2=(1-\chi(\Diff))u,\]
where
$\chi$ is the characteristic function of $C$. Since $X$ is a subspace of $L^2(\R^2)$, the Fourier multiplier $\chi(\Diff)$ induces an orthogonal decomposition
\[
X = X_1 \oplus X_2,
\]
where
\[
X_1{ = } \chi(\Diff) X, \qquad  X_2 { = } (1 - \chi(\Diff)) X,
 \]
with analogous decompositions for the spaces $Y$, $\tilde Y$ and $Z$; we henceforth use the 
subscripts $_1$ and $_2$ to denote the corresponding orthogonal projections.

\begin{lemma}
The spaces $X_1$, $Y_1$, $\tilde{Y}_1$ and $Z_1$ all coincide with $\chi(\Diff)L^2(\R^2)$, and the
norms $|\cdot|_{L^2}$, $|\cdot|_{X}$, $|\cdot|_{Y}$, $|\cdot|_{\tilde Y}$ and $|\cdot|_{Z}$ are all equivalent
norms for these spaces.
\end{lemma}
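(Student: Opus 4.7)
The plan is to observe that on the cone $C = \{k \in \R^2 \colon |k| \le \delta,\ |k_2|/|k_1| \le \delta\}$ each of the weight functions defining $|\cdot|_X$, $|\cdot|_Y$, $|\cdot|_{\tilde Y}$ and $|\cdot|_Z$ is uniformly comparable to $1$. The defining inequalities of $C$ rule out the singularities at $k_1 = 0$ and at infinity that these weights would otherwise exhibit, so every weight is bounded both above and below by a positive constant depending only on $\delta$.

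First, I would verify this weight-by-weight. On $C$ one has $|k_2|/|k_1| \le \delta$ and $|k| \le \delta$, and consequently $|k|/|k_1| = (1 + k_2^2/k_1^2)^{1/2} \le (1+\delta^2)^{1/2}$. These bounds directly give $|k_2/k_1| \le \delta$, $|k|^{3/2}/|k_1| \le \delta^{1/2}(1+\delta^2)^{1/2}$, $k_2^2/k_1^2 \le \delta^2$, $k_1^2 \le \delta^2$, $k_2^4/k_1^2 \le \delta^4$, $|k|^{2s} \le \delta^{2s}$ and $k_1^2|k|^{2s-3} \le \delta^{2s-1}$ (the last requiring $s > \tfrac{3}{2}$). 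Since the constant $1$ appears in each weight, there exists $C(\delta) > 0$ with $1 \le w(k) \le C(\delta)$ on $C$ for every weight $w$ above. Plancherel's theorem then yields $|u|_{L^2} \le |u|_N \le C(\delta)^{1/2}|u|_{L^2}$ for any $u \in L^2(\R^2)$ with $\supp \hat u \subseteq C$ and any norm $N \in \{|\cdot|_X, |\cdot|_Y, |\cdot|_{\tilde Y}, |\cdot|_Z\}$, giving the four desired norm equivalences on $\chi(\Diff)L^2(\R^2)$.

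Second, I would identify each of $X_1$, $Y_1$, $\tilde Y_1$, $Z_1$ with $\chi(\Diff)L^2(\R^2)$. The inclusion $X_1 = \chi(\Diff)X \subseteq \chi(\Diff)L^2(\R^2)$, and the analogous inclusions for $Y_1$, $\tilde Y_1$ and $Z_1$, follow from Lemma~\ref{lemma:global embeddings}(i) combined with the $L^2$-boundedness of $\chi(\Diff)$ (which holds because $\chi \in L^\infty$). For the reverse inclusions, given $u \in \chi(\Diff)L^2(\R^2)$, I approximate $u$ in $L^2$ by a sequence $\{v_n\} \subset \partial_x \Schwartz(\R^2)$ and set $u_n = \chi(\Diff)v_n \in X_1$; the norm equivalence established above then upgrades $L^2$-convergence to $X$-convergence, placing $u$ in $X_1$, and the same argument handles $Y_1$, $\tilde Y_1$ and $Z_1$.

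The only step requiring mild care is the density of $\partial_x \Schwartz(\R^2)$ in $L^2(\R^2)$, which I would establish by taking an ordinary Schwartz approximation of $u$ and then multiplying the Fourier transforms by cutoffs that vanish in shrinking neighbourhoods of $\{k_1 = 0\}$; the resulting functions lie in $\partial_x \Schwartz(\R^2)$ and still converge to $u$ in $L^2$ by dominated convergence. Apart from this routine approximation, the proof reduces entirely to the elementary weight bounds above, and I do not anticipate any serious obstacle.
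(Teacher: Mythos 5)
Your proof is correct and follows essentially the same route as the paper's: both arguments rest on the observation that all the weights defining $|\cdot|_X$, $|\cdot|_Y$, $|\cdot|_{\tilde Y}$ and $|\cdot|_Z$ are bounded above and below by $\delta$-dependent constants on the cone $C$, so that these norms are equivalent to $|\cdot|_{L^2}$ for functions with spectrum in $C$, whence the spaces coincide. Your extra care with the completion (approximating by elements of $\partial_x\Schwartz(\R^2)$ and upgrading $L^2$-convergence to $X$-convergence) is a harmless elaboration of what the paper leaves implicit.
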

\proof
Observe that
\begin{eqnarray*}
\chi(\Diff)L^2(\R^2) & = & \{u \in L^2(\R^2): \supp \hat{u} \subseteq C\}, \nonumber \\
\hspace{7.5mm}\chi(\Diff)X & = & \{u \in X: \supp \hat{u} \subseteq C\},
\end{eqnarray*}
so that
$X \subseteq L^2(\R^2)$ implies that $\chi(\Diff)X \subseteq \chi(\Diff)L^2(\R^2)$.
Conversely, suppose that $u \in L^2(\R^2)$ with $\supp \hat{u} \subseteq C$, so that
$|u|_X^2 \leq (1+2\delta^2) |u|_{L^2}^2$ and hence $u \in X$;
it follows that $\chi(\Diff)L^2(\R^2) \subseteq \chi(\Diff)X$. The other equalities are established in the
same way.\qed

Let us now consider the Fourier multipliers
\begin{equation}\label{eq:n defs}
n = m -1, \qquad \tilde n = \tilde m - 1
\end{equation}
which arise in our study of solitary waves with near unit speed.

%%% lemma:N %%%
\begin{lemma} \label{lemma:n}
The mapping $n(\Diff)$ is an isomorphism $X_2 \to Z_2$.
\end{lemma}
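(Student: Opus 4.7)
The plan is to reduce the isomorphism claim for $n(\Diff)$ to a pointwise lower bound on $n$ on $C^c$, combined with the already established isomorphism $m(\Diff)\colon X \to Z$ (i.e.\ $Z = m(\Diff)X$, recorded just before the lemma).

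The key pointwise estimate is $m(k) \geq 1 + \eta_0$ on $C^c$ for some $\eta_0 > 0$ depending on $\delta$, and hence the equivalence $n(k) \simeq m(k)$ there. I would partition $C^c$ according to which defining inequality of $C$ is violated. On the part where $|k_2/k_1| > \delta$ the factor $(1 + 2k_2^2/k_1^2)^{\frac{1}{2}} \geq (1 + 2\delta^2)^{\frac{1}{2}}$ and the pointwise bound $c(k_1) := (1+\beta k_1^2)^{\frac{1}{2}}(\tanh|k_1|/|k_1|)^{\frac{1}{2}} \geq 1$ (valid for $\beta > \tfrac{1}{3}$) yield $m(k) \geq (1 + 2\delta^2)^{\frac{1}{2}}$. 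On the complementary part $\{|k| \geq \delta,\, |k_2/k_1| \leq \delta\}$ one has $|k_1| \geq \delta/\sqrt{1+\delta^2}$, and continuity of $c$ together with its strict global minimum at $k_1 = 0$ (Figure~\ref{fig:dispersion relation}) yields $c(k_1) \geq 1 + \eta$ uniformly, whence $m(k) \geq c(k_1) > 1$. Combining with the trivial $m - 1 \leq m$ gives $n \simeq m$ on $C^c$.

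Boundedness of $n(\Diff) = m(\Diff) - I\colon X_2 \to Z_2$ is then immediate from Lemma~\ref{lemma:global embeddings}: part (ii) gives $m(\Diff)\colon X \to Z$ bounded, and the chain $X \hookrightarrow H^s \hookrightarrow Z$ supplied by part (i) gives the identity $X \to Z$ bounded; Fourier multipliers preserve Fourier support, so the difference restricts to a bounded map $X_2 \to Z_2$.

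For bounded invertibility, I would work at the level of symbols: given $v \in Z_2$, define $f$ by $\hat{f} := \hat{v}/n$ on $C^c$ and $\hat{f} = 0$ on $C$, which makes sense since $n \geq \eta_0$ there. The bound $|f|_X \lesssim |v|_Z$ reduces to the pointwise weight comparison
\[
\frac{1+k_2^2/k_1^2+k_2^4/k_1^2+|k|^{2s}}{n(k)^2} \lesssim 1+|k|+k_1^2|k|^{2s-3} \quad \text{on } C^c,
\]
which follows by substituting $n \simeq m$ from the first step and then invoking the two-sided weight equivalence encoded in the isomorphism $Z = m(\Diff)X$. Since $\hat{f}$ is supported in $C^c$, $f$ lies in $X_2$ and $n(\Diff)f = v$ by construction. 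The genuinely quantitative step is the pointwise lower bound on $m$; the rest is symbol manipulation relative to the known isomorphism for $m(\Diff)$.
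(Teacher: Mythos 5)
Your argument is correct in substance but structured quite differently from the paper's. The paper works directly with the symbol $n$: it writes $n(k)$ as $\bigl(((1+\beta|k|^2)\tanh|k|/|k|)^{1/2}-1\bigr)(1+k_2^2/k_1^2)^{1/2}+(1+k_2^2/k_1^2)^{1/2}-1$, extracts the \emph{growing} lower bound $n(k)\gtrsim |k|^{3/2}/|k_1|$ for $|k|\geq\delta$ and a uniform lower bound on the wedge $\{|k|\leq\delta,\ |k_2/k_1|\geq\delta\}$, and then checks the weight inequality $\bigl(1+k_2^2/k_1^2+k_2^4/k_1^2+|k|^{2s}\bigr)n(k)^{-2}\lesssim 1+|k|+k_1^2|k|^{2s-3}$ by hand on each piece. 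You instead establish $m\geq 1+\eta_0$ on the complement of $C$, deduce $n=m-1\simeq m$ there, and outsource the weight comparison to the two-sided equivalence behind $Z=m(\Diff)X$. This is more modular, and you correctly identify that it is the two-sided comparability $n\simeq m$ (not merely the uniform bound $n\geq\eta_0$, which would be far too weak since the ratio of the $X$- and $Z$-weights is unbounded) that transfers the growth of $m$ to $n$.

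Two caveats. The substantive one is logical: the half of the isomorphism $Z=m(\Diff)X$ that you actually need, namely the pointwise inequality $\bigl(1+k_2^2/k_1^2+k_2^4/k_1^2+|k|^{2s}\bigr)m(k)^{-2}\lesssim 1+|k|+k_1^2|k|^{2s-3}$, is justified in the paper only in Remark~\ref{rem:m iso}, which is explicitly obtained as ``a straightforward modification of the above proof'' --- i.e.\ of the proof of this very lemma. Citing $Z=m(\Diff)X$ as already established is therefore circular relative to the paper's development (only the forward bound $m(\Diff):X\to Z$ from Lemma~\ref{lemma:global embeddings}(ii) is genuinely available beforehand). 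The gap is fillable: the inequality is true for all $k$ and follows from $m(k)^2\simeq(1+|k|)(1+k_2^2/k_1^2)$ by a short computation, but you must supply it rather than invoke it. The second caveat is cosmetic: the first two factors of $m$ depend on $|k|$, not on $|k_1|$, so on $\{|k|\geq\delta,\ |k_2/k_1|\leq\delta\}$ the bound you want is that $(1+\beta|k|^2)^{1/2}(\tanh|k|/|k|)^{1/2}\geq 1+\eta$ for $|k|\geq\delta$, which follows directly from the strict global minimum of the dispersion curve at the origin without the detour through $|k_1|\geq\delta/\sqrt{1+\delta^2}$.
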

\proof
It follows from Lemma~\ref{lemma:global embeddings}(ii) that $n(\Diff) = m(\Diff) -1$ maps $X$ continuously
into $Z$ and hence $X_2$ continuously into $Z_2$.

Writing
\[
n(k)=\bigg(\bigg(1+\beta|k|^2)\frac{\tanh |k|}{|k|}\bigg)^\frac{1}{2}-1\bigg)\!\!\!\left(1+\frac{k_2^2}{k_1^2}\right)^\frac{1}{2}+\left(1+\frac{k_2^2}{k_1^2}\right)^\frac{1}{2}-1
\]
and noting that $(1+\beta |k|^2)|k|^{-1}\tanh|k|-1 \gtrsim |k|$ for $|k| \geq \delta$, one finds that
\[n(k) \gtrsim |k|^\frac{1}{2}\left(1+\frac{k_2^2}{k_1^2}\right)^\frac{1}{2}=\frac{|k|^{\frac{3}{2}}}{|k_1|}\]
and therefore
\begin{eqnarray*}
\left( 1 + \frac{k_2^2}{k_1^2} + \frac{k_2^4}{k_1^2} + |k|^{2s}  \right)n(k)^{-2}
& \lesssim &
\frac{k_1^2}{|k|^3}+\frac{k_2^2}{|k|^3}+\frac{k_2^4}{|k|^3}+k_1^2|k|^{2s-3} \nonumber \\
& \lesssim & |k| + k_1^2|k|^{2s-3}
\end{eqnarray*}
for $|k| \geq \delta$.
On the other hand, in the regime $|k| \leq \delta$, $\left|\frac{k_2}{k_1}\right| \geq \delta$ one has that
\[\afl
\left( 1 + \frac{k_2^2}{k_1^2} + \frac{k_2^4}{k_1^2} + |k|^{2s}  \right)n(k)^{-2}
\lesssim \left(1+\frac{k_2^2}{k_1^2}\right)\!\!\!
\bigg(\bigg(1+\frac{k_2^2}{k_1^2}\bigg)^\frac{1}{2}-1\bigg)^{-2}
\lesssim 1;
\]
altogether we have established that $|n(\Diff)^{-1} (\cdot)|_X^2 \lesssim |\cdot|_Z^2$.
\qed

\begin{remark} \label{rem:m iso}
A straightforward modification of the above proof shows that $m^{-1}(\Diff)$ maps $Z$ continuously into $X$, so that
$m$ is an isomorphism $X \to Z$.
It is however rather the multiplier $n$ that appears in our analysis.
\end{remark}

In view of the KP-scaling $(k_1, k_2) \mapsto (\varepsilon k_1, \varepsilon^2 k_2)$ it is convenient to work with
the scaled norm
\begin{equation}\label{eq:epsilon norm}
|u_1|_\varepsilon^2 = \int_{\R^2} \left( 1 +  \varepsilon^{-2} \frac{k_2^2}{k_1^2} + \varepsilon^{-2} k_1^2\right) |\hat u_1(k)|^2\, \diff k
\end{equation}
for $\tilde{Y}_1$ (or, equivalently, for $\chi(\Diff) L^2(\R^2)$, $X_1$, $Y_1$, $Z_1$).

%%% lemma:EPSILON %%%
\begin{lemma}\label{lemma:epsilon}
The estimates
\[|u_1|_{m,\infty} \lesssim \varepsilon |u_1|_\varepsilon, \qquad m \geq 0,\]
and
\[
| u_1 v |_Z \lesssim \varepsilon | u_1 |_\varepsilon |v|_{X}{,}
\] 
hold for all $u_1 \in X_1$ and $v \in X$. 
\end{lemma}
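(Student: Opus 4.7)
For the first estimate I would argue via Fourier inversion and Cauchy--Schwarz, exploiting the Fourier support of $u_1$ in the bounded set $C$. For any multi-index $\alpha$,
\[
|\partial^\alpha u_1(x)|
\leq
(2\pi)^{-1}\int_C|k|^{|\alpha|}|\hat u_1(k)|\dk
\leq
(2\pi)^{-1}\delta^{|\alpha|}\left(\int_C\frac{\dk}{w_\varepsilon(k)}\right)^{\!1/2}\!|u_1|_\varepsilon,
\]
where $w_\varepsilon(k)=1+\varepsilon^{-2}k_2^2/k_1^2+\varepsilon^{-2}k_1^2$ is the weight defining $|\cdot|_\varepsilon$. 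It then suffices to show that $\int_Cw_\varepsilon^{-1}\dk\lesssim\varepsilon^2$; I would compute this via the KP-type substitution $k_1=\varepsilon\kappa$, $k_2=\varepsilon^2\kappa\tau$ (Jacobian $\varepsilon^3|\kappa|$), under which $w_\varepsilon$ becomes $1+\tau^2+\kappa^2$. The resulting integral $\varepsilon^3\int|\kappa|(1+\tau^2+\kappa^2)^{-1}\,\diff\tau\,\diff\kappa$, over a region where $|\kappa|,|\tau|\lesssim\delta/\varepsilon$, is bounded by integrating first in $\tau\in\R$ (yielding $\pi(1+\kappa^2)^{-1/2}$) and then in $\kappa$, producing a net bound $\lesssim\varepsilon^3\cdot\delta/\varepsilon=\delta\varepsilon^2$. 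Summing over $|\alpha|\leq m$ then yields the first estimate.

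For the second estimate I would use the embedding $H^{s-1/2}(\R^2)\hookrightarrow Z$ from Lemma~\ref{lemma:global embeddings}(i) (which follows at high frequency from $k_1^2|k|^{2s-3}\leq|k|^{2s-1}$) to reduce matters to bounding $|u_1v|_{H^{s-1/2}}$, and then apply an endpoint Moser--Kato--Ponce product estimate
\[
|u_1v|_{H^{s-1/2}}\lesssim|u_1|_{L^\infty}|v|_{H^{s-1/2}}+|u_1|_{W^{s-1/2,\infty}}|v|_{L^2}.
\]
Because $\hat u_1$ is supported in the bounded set $C$, Bernstein's inequality gives $|u_1|_{W^{\sigma,\infty}}\lesssim|u_1|_{L^\infty}$ for every $\sigma\geq0$; combining this with part~(i) of the present lemma controls both $u_1$-factors by $\varepsilon|u_1|_\varepsilon$, while the $v$-factors are $\lesssim|v|_X$ via the chain $X\hookrightarrow H^s\hookrightarrow H^{s-1/2}$ from Lemma~\ref{lemma:global embeddings}(i).

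The hard part is arranging the product estimate so that the factor $\varepsilon$ appears in \emph{both} terms. The standard Leibniz bound $|u_1v|_{H^\sigma}\lesssim|u_1|_{H^\sigma}|v|_{L^\infty}+|u_1|_{L^\infty}|v|_{H^\sigma}$ yields only a single $\varepsilon$ (from $|u_1|_{L^\infty}$), since $|u_1|_{H^\sigma}$ is merely $\lesssim|u_1|_\varepsilon$ with no power of $\varepsilon$. The endpoint version used above, which shifts the $L^\infty$ norm onto $u_1$ in both terms, is precisely what makes the estimate sharp; this shift is legitimate thanks to the Fourier localisation of $u_1$, which is the crux of the argument.
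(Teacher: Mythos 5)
Your proposal is correct and follows essentially the same route as the paper: Fourier inversion plus Cauchy--Schwarz with the computation $\int_C w_\varepsilon^{-1}\,\diff k\lesssim\varepsilon^2$ for the first estimate, and for the second an asymmetric product estimate that places all the $L^\infty$ burden on the band-limited factor $u_1$ before passing to $Z$ via $H^{s-\frac{1}{2}}(\R^2)\hookrightarrow Z$. The only cosmetic difference is that the paper reaches the product bound directly by taking $m>s$ in part (i) and using $|u_1v|_{H^\sigma}\lesssim|u_1|_{m,\infty}|v|_{H^\sigma}$, whereas you route through a Kato--Ponce endpoint estimate plus Bernstein's inequality -- the same idea, slightly more machinery.
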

\proof
Note that
\[
| u_1 |_{m,\infty} \lesssim | (1+|k|^{2m}) \hat u_1 |_{L_1(\R^2)} \lesssim  |\hat u_1 |_{L^1}  \leq  | u_1|_{\varepsilon} I^{\frac{1}{2}},
\]
where
\begin{eqnarray*}
I & = & \int_C \frac{1}{1+ \varepsilon^{-2}\tfrac{k_2^2}{k_1^2}+\varepsilon^{-2}k_1^2} \dk \nonumber \\
& = & 4\varepsilon^2 \int_0^{\delta/\varepsilon} \int_0^{1/\varepsilon^2} \frac{k_1}{1+k_3^2+k_1^2} \dk_3\dk_1 \nonumber \\
%& = & 4 \varepsilon^3 \bigg(-\cot^{-1}(\varepsilon^2)
%+ \frac{1}{2\varepsilon^2}(2\varepsilon(\delta^2+\varepsilon^2)^{\frac{1}{2}}\cot^{-1} (\varepsilon(\delta^2+\varepsilon^2)^{\frac{1}{2}}) \nonumber \\
%& & \qquad\mbox{}
%+ \log \left( 1+ \frac{\delta^2\varepsilon^{-2}}{1+\varepsilon^4}\right)\bigg) \nonumber \\
%& = & 2\pi \delta \varepsilon^2 + \bigO(\varepsilon^3) \nonumber \\
&   \lesssim & \varepsilon^2.
\end{eqnarray*}
Choosing $m>s$, one therefore finds that
\[
| u_1 v |_Z \lesssim | u_1 v |_H \lesssim | u_1 |_{m,\infty} |v|_H \lesssim \varepsilon | u_1 |_\varepsilon |v|_X.\hspace{1.36in}\Box
\]

Finally, we introduce the space $\tilde{Y}_\varepsilon := \chi_\varepsilon(\Diff)\tilde{Y}$, where
$\chi_\varepsilon(k_1,k_2)=\chi(\varepsilon k_1, \varepsilon^2 k_2)$ (with norm
$|\cdot|_{\tilde Y}$), noting the relationship
\[|u|_\varepsilon^2=\varepsilon|\zeta|_{\tilde{Y}}^2,
\qquad  u(x,y)=\varepsilon^2 \zeta(\varepsilon x, \varepsilon^2 y)
\]
for $\zeta \in \tilde{Y}_\varepsilon$. Observe that $\tilde{Y}_\varepsilon$ coincides with
$\chi_\varepsilon(\Diff)X$, $\chi_\varepsilon(\Diff)Y$, $\chi_\varepsilon(\Diff)Z$ and $\chi_\varepsilon(\Diff)L^2(\R^2)$
for $\varepsilon>0$, while in the limit
$\varepsilon \to 0$ we find that $\tilde{Y}_0 = \tilde{Y}$. We work in particular with the distinguished subsets $\{\zeta: |\zeta|_{\tilde Y} < M\}$
and $\{\zeta: |\zeta|_{\tilde Y} < M-1\}$ of $\tilde{Y}_\varepsilon$, denoting them by respectively $B_M(0)$
and $B_{M-1}(0)$.

We conclude this section with a result which is used in our analysis of the KP-I functional ${\mathcal T}_0$. 

\begin{corollary}
The functional ${\mathcal T}_0$ maps $\tilde{Y}$ smoothly into $\R$ and its critical points are precisely the homoclinic solutions of
equation \eref{eq:normalised steady KP again}.
\end{corollary}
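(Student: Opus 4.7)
The plan is to treat smoothness and the characterisation of critical points separately, exploiting the fact that the weight appearing in $|\cdot|_{\tilde Y}^2$ coincides (up to the constant $\tfrac{1}{2}(\beta-\tfrac{1}{3})$, treated as unit size per the convention adopted at the start of this section) with the symbol $\tilde m(k)$, so that $\tilde m(\Diff)^{1/2}$ is an isometric isomorphism of $\tilde Y$ onto $L^2(\R^2)$. For smoothness, the quadratic part of ${\mathcal T}_0$ is exactly $\tfrac{1}{2}|\zeta|_{\tilde Y}^2$, a bounded symmetric bilinear form on $\tilde Y$ and therefore smooth. For the cubic part, Proposition \ref{prop:anisotropic embeddings}(i) provides the continuous embedding $\tilde Y \hookrightarrow L^3(\R^2)$, so that $(\zeta_1,\zeta_2,\zeta_3) \mapsto \int_{\R^2} \zeta_1\zeta_2\zeta_3 \dx\dy$ is a bounded trilinear form on $\tilde Y$; its restriction to the diagonal is a smooth cubic polynomial, and ${\mathcal T}_0 \in C^\infty(\tilde Y; \R)$ follows.

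The Fr\'echet derivative is
\[
\diff {\mathcal T}_0[\zeta](\eta) = \langle \tilde m(\Diff)^{1/2}\zeta, \tilde m(\Diff)^{1/2}\eta\rangle_{L^2} + \int_{\R^2} \zeta^2 \eta \dx\dy, \qquad \zeta,\eta \in \tilde Y,
\]
the second term being well-defined since $\zeta^2\eta \in L^1(\R^2)$ by H\"older's inequality and the $L^3$ embedding just mentioned. A critical point $\zeta$ therefore satisfies this identity for every $\eta \in \tilde Y$. Testing against $\eta = \partial_x \phi$ with $\phi \in \Schwartz(\R^2)$ --- a dense subspace of $\tilde Y$ by the very definition of that space --- and moving the multiplier $\tilde m(\Diff)$ onto $\eta$ yields $\langle \tilde m(\Diff)\zeta + \zeta^2, \eta\rangle = 0$ in the sense of tempered distributions; density makes this equivalent to equation \eref{eq:normalised steady KP again} holding in $\mathcal{D}^\prime(\R^2)$. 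The converse implication --- that any $\zeta \in \tilde Y$ solving \eref{eq:normalised steady KP again} distributionally is a critical point --- is immediate from the same computation.

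The main subtlety I anticipate is the distributional interpretation of $\tilde m(\Diff)\zeta$, since the symbol $\tilde m$ carries a non-integrable singularity along $\{k_1 = 0\}$ and $\zeta \in \tilde Y$ is only guaranteed to lie in $L^2(\R^2)$. This is neutralised precisely by pairing against test functions of the form $\partial_x \phi$: the explicit factor of $k_1$ in $\hat \eta$ absorbs the $k_1^{-2}$ behaviour of $\tilde m$, so that $\tilde m(\Diff)\eta$ is a legitimate element of $L^2(\R^2)$ (indeed of $\Schwartz(\R^2)$ away from $k_1 = 0$ and smooth across it) and the transfer $\langle \tilde m(\Diff)\zeta, \eta\rangle = \langle \zeta, \tilde m(\Diff)\eta\rangle_{L^2}$ is legitimate. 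The density of $\partial_x \Schwartz(\R^2)$ in $\tilde Y$ built into the construction of the space then removes any loss in passing from this restricted class of test functions to a genuine distributional equation on $\R^2$.
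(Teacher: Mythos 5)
Your argument is correct and is essentially the (unwritten) argument the paper intends: the corollary is stated without proof as a routine consequence of the embedding $\tilde Y\hookrightarrow L^3(\R^2)$ from Proposition \ref{prop:anisotropic embeddings}(i), the identification of the quadratic part with $\tfrac{1}{2}|\cdot|_{\tilde Y}^2$, and the density of $\partial_x\Schwartz(\R^2)$ in $\tilde Y$ built into the definition of the space. One small correction to your closing remark: for $\eta=\partial_x\phi$ the single factor $k_1$ in $\hat\eta$ does \emph{not} fully absorb the $k_1^{-2}$ singularity of $\tilde m$ (the term $k_2^2\hat\phi/k_1$ fails to be square-integrable near $k_1=0$ whenever $\hat\phi(0,k_2)\neq 0$), so $\tilde m(\Diff)\eta$ need not lie in $L^2(\R^2)$; this is harmless, because the pairing you actually use, $\langle\tilde m(\Diff)^{1/2}\zeta,\tilde m(\Diff)^{1/2}\eta\rangle_{L^2}$, is controlled by Cauchy--Schwarz with both factors in $L^2$, and the Euler--Lagrange equation is then read either in this weak variational sense or, equivalently, as a distributional identity after applying $\partial_x^2$ (or a posteriori, since $\widehat{\zeta^2}$ is a well-defined function once $\zeta\in L^2\cap L^4$).
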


Figure \ref{fig:spaces} summarises the various spaces and their relationships to each other.

\begin{figure}
\centering
\includegraphics{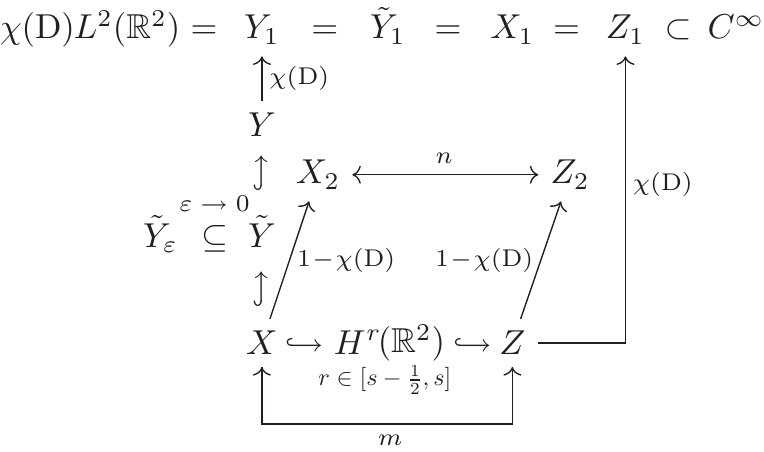}

\caption{\small An overview of the spaces used in this paper. The spaces $Y$ and $\tilde Y$ are the energy spaces for respectively the FDKP-I and KP-I equations.
The operator $\chi(\Diff)$ induces orthogonal decompositions $X=X_1 \oplus X_2$,
$Z=Z_1\oplus Z_2$, while $m(\Diff)$, $n(\Diff)$ define isomorphisms $X \to Z$ and $X_2 \to Z_2$.
Finally, $\tilde{Y}_\varepsilon = \chi(\varepsilon \Diff)\tilde{Y}$.}
\label{fig:spaces}

\end{figure}

\section{Variational reduction} \label{sec:reduction}

We proceed by making the Ansatz $c = 1 - \varepsilon^2$ and seeking critical points of the functional
${\mathcal I}_\varepsilon: X \to \R$ given by
\begin{equation}\label{eq:I_varepsilon}
{ \mathcal{I}_\varepsilon(u) =  \frac{1}{2}\int_{\R^2} \left( \varepsilon^2 u^2 + (n(\Diff)^{\frac{1}{2}} u)^2  \right) \dx \dy + \frac{1}{3} \int_{\R^2} u^3} \dx \dy,
\end{equation}
so that the critical points of ${\mathcal I}_\varepsilon$ are precisely the homoclinic solutions of \eref{eq:steady FDKP} with
$c=1-\varepsilon^2$.

Note that $u = u_1 + u_2 \in X_1 \oplus X_2$ is a critical point of $\mathcal{I}_\varepsilon$ if and only if
\[
\diff {\mathcal I}_\varepsilon[u_1 + u_2](w_1) = 0, \qquad \diff {\mathcal I}_\varepsilon[u_1 + u_2](w_2) = 0
\]
for all $w_1, w_2 \in X$, which equations are equivalent to the system
\begin{eqnarray}
\varepsilon^2 u_1 + n(\Diff) u_1  { +} \chi(\Diff)(u_1 + u_2)^2 =0, & & \qquad \mbox{ in } Z_1,\label{eq:system i}\nonumber \\
\varepsilon^2 u_2 + n(\Diff) u_2  { +} (1-\chi(\Diff))(u_1 + u_2)^2 =0, & & \qquad \mbox{ in } Z_2.\label{eq:system ii}
\end{eqnarray}

The next step is to solve \eref{eq:system ii} for $u_2$ as a function of $u_1$ using the following
result, which is a proved by a straightforward application of the contraction mapping principle. 

%%% lemma:FIXED-POINT %%%
\begin{lemma} \label{lemma:fixed-point}
Let $W_1$, $W_2$ be Banach spaces, $\overline{B}_1$ be a closed ball centred on the origin in $W_1$,
$r$ be a continuous function $\overline{B}_1 \to [0,\infty)$
and $F \colon \overline{B}_1 \times W_2 \to W_2$ be a smooth function satisfying
\[
|F(w_1,0)|_{W_2} \leq \tfrac{1}{2}r(w_1), \qquad |\diff_2 F[w_1,w_2]|_{W_2 \to W_2} \leq \tfrac{1}{3}
\]
for all $(w_1,w_2) \in \overline{B}_1 \times \overline{B}_{r(w_1)}(0)$. The fixed-point equation
\[
w_2 = F(w_1, w_2)
\]
has for each $w_1 \in \overline{B}_1$ a unique solution $w_2 = w_2(w_1) \in \overline{B}_{r(w_1)}(0)$. Moreover
$w_2$ is a smooth function of $w_1$ and
satisfies
\[
|\diff w_2[w_1] |_{W_1 \to W_2} \lesssim | \diff_1 F[w_1,w_2] |_{W_1 \to W_2},
\]
and
\begin{eqnarray*}
\afl| \diff^2 w_2[w_1] |_{W_1^2 \to W_2} &\lesssim & |\diff_1^2 F[w_1, w_2] |_{W_1^2 \to W_2}\nonumber \\ 
&&\quad \mbox{}+  | \diff_2 \diff_2 F[w_1,w_2]|_{W_1 \times W_2 \to W_2} \, | \diff_1 F[w_1, w_2] |_{W_1 \to W_2} \nonumber \\
&&\quad \mbox{}+  | \diff_2^2 F[w_1,w_2] |_{W_2^2 \to W_2} \, | \diff_1 F[w_1, w_2]|^2_{W_1 \to W_2}.
\end{eqnarray*}
\end{lemma}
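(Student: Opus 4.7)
The plan is to prove this by a standard Banach fixed-point argument combined with the implicit function theorem, followed by implicit differentiation for the derivative bounds. The only mild care needed is in checking that the hypotheses force $F(w_1,\cdot)$ to be a self-map of $\overline{B}_{r(w_1)}(0)$.

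First, I would fix $w_1 \in \overline{B}_1$ and study $F(w_1,\cdot) \colon \overline{B}_{r(w_1)}(0) \to W_2$. For any $w_2 \in \overline{B}_{r(w_1)}(0)$ the mean value inequality combined with the second hypothesis gives
\[
|F(w_1,w_2)|_{W_2} \leq |F(w_1,0)|_{W_2} + \tfrac{1}{3}|w_2|_{W_2} \leq \tfrac{1}{2}r(w_1) + \tfrac{1}{3}r(w_1) < r(w_1),
\]
so $F(w_1,\cdot)$ sends $\overline{B}_{r(w_1)}(0)$ into itself, and the uniform bound $|\diff_2 F| \leq \tfrac{1}{3}$ makes it a strict contraction. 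The Banach fixed-point theorem then delivers a unique $w_2(w_1) \in \overline{B}_{r(w_1)}(0)$ solving $w_2 = F(w_1,w_2)$.

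For the smoothness assertion I would apply the implicit function theorem to $G(w_1,w_2) = w_2 - F(w_1,w_2)$. Since $|\diff_2 F[w_1,w_2]|_{W_2 \to W_2} \leq \tfrac{1}{3}$, the operator $\diff_2 G = I - \diff_2 F$ is invertible by a Neumann series with $|(I-\diff_2 F)^{-1}|_{W_2 \to W_2} \leq \tfrac{3}{2}$ uniformly over $(w_1,w_2) \in \overline{B}_1 \times \overline{B}_{r(w_1)}(0)$. Smoothness of $F$ then yields smoothness of the solution map $w_1 \mapsto w_2(w_1)$.

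The derivative estimates now follow by implicit differentiation of the identity $w_2(w_1) = F(w_1,w_2(w_1))$. First differentiation gives
\[
\diff w_2[w_1] = (I - \diff_2 F[w_1,w_2])^{-1}\diff_1 F[w_1,w_2],
\]
whence $|\diff w_2[w_1]|_{W_1 \to W_2} \lesssim |\diff_1 F[w_1,w_2]|_{W_1 \to W_2}$. Differentiating a second time and collecting the $\diff^2 w_2$ term on the left yields
\[
(I - \diff_2 F)\diff^2 w_2[w_1] = \diff_1^2 F + 2\,\diff_1 \diff_2 F \cdot \diff w_2 + \diff_2^2 F \cdot (\diff w_2)^2,
\]
where the composition conventions are the obvious ones. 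Inverting $I - \diff_2 F$, taking norms, and inserting the bound for $|\diff w_2|$ produces precisely the stated estimate for $|\diff^2 w_2|_{W_1^2 \to W_2}$.

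There is no substantial obstacle here; the argument is entirely routine once one checks the self-map property and the uniform invertibility of $I-\diff_2 F$. In the subsequent application $W_1 = X_1$, $W_2 = X_2$, $F(u_1,u_2) = -n(\Diff)^{-1}((1-\chi(\Diff))(u_1+u_2)^2) - \varepsilon^2 n(\Diff)^{-1} u_2$ on $X_2$ (using Lemma~\ref{lemma:n}), and the hypotheses of this lemma will be verified using the bilinear estimates from Lemma~\ref{lemma:epsilon}, so isolating the abstract contraction-mapping content here is the right organisational move.
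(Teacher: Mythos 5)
Your argument is correct and is precisely the ``straightforward application of the contraction mapping principle'' that the paper invokes without writing out: self-map and contraction from the two hypotheses, uniform invertibility of $I-\diff_2 F$ via a Neumann series, and implicit differentiation for the two derivative bounds, which reproduce the stated estimates exactly (your $\diff_1\diff_2 F$ is the operator the paper's $|\diff_2\diff_2 F|_{W_1\times W_2\to W_2}$ is evidently meant to denote). No gaps; the only cosmetic slip is in your closing aside, where the $\varepsilon^2 u_2$ term in $G$ should also carry the projection $1-\chi(\Diff)$, which is harmless since $u_2\in X_2$.
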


Write \eref{eq:system ii} as 
\begin{equation}\label{eq:eqn with G}
u_2=G(u_1,u_2), 
\end{equation}
where
\begin{equation}
\label{eq:G}
G(u_1,u_2) = -n(\Diff)^{-1} { (1-\chi(\Diff) ) \left( \varepsilon^2 u_2 +  (u_1 + u_2)^2 \right)};
\end{equation}
the following mapping property of $G$ follows from Corollary \ref{cor:spaces corollary} and Proposition \ref{prop:FDKP op weak}.

\begin{proposition} \label{prop:G weak}
Equation \eref{eq:G} defines a smooth and weakly continuous mapping $G:X_1 \times X_2 \to X_2$.
\end{proposition}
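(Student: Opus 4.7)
The plan is to exhibit $G$ as a composition of maps whose smoothness and weak continuity follow from the results already established in this section, and then to invoke the fact that both properties survive composition in the Banach-space setting. Writing
\[
G(u_1, u_2) = -n(\Diff)^{-1}(1-\chi(\Diff))\bigl(\varepsilon^2 u_2 + Q(u_1+u_2)\bigr),
\]
where $Q\colon X \to Z$ denotes the quadratic map $u \mapsto u^2$, exposes the relevant building blocks.

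First I would dispatch the linear ingredients. The sum $(u_1, u_2) \mapsto u_1 + u_2$ from $X_1 \times X_2$ to $X$, the inclusion $X \hookrightarrow Z$ (which follows from Lemma \ref{lemma:global embeddings}(i)), the orthogonal projection $1 - \chi(\Diff) \colon Z \to Z_2$ coming from the decomposition $Z = Z_1 \oplus Z_2$, and the isomorphism $n(\Diff)^{-1} \colon Z_2 \to X_2$ supplied by Lemma \ref{lemma:n} are all bounded linear maps, hence smooth, and every bounded linear map between Banach spaces is automatically weak-to-weak sequentially continuous.

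Next I would handle the quadratic piece. Corollary \ref{cor:spaces corollary}(i) asserts that $u \mapsto -cu + m(\Diff)u + u^2$ is smooth $X \to Z$; subtracting the bounded linear terms (themselves smooth by the previous paragraph, the second one via Lemma \ref{lemma:global embeddings}(ii)) leaves $Q\colon X \to Z$ smooth. The identical subtraction applied to Proposition \ref{prop:FDKP op weak} yields the weak continuity of $Q$. Composing the pieces $(u_1, u_2) \mapsto u_1+u_2 \mapsto \varepsilon^2 u_2 + Q(u_1+u_2) \mapsto (1-\chi(\Diff))(\cdot) \mapsto -n(\Diff)^{-1}(\cdot)$ and using that smoothness and weak sequential continuity are each preserved under composition delivers the claim.

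The \textbf{main obstacle}, were it not already resolved, would be the weak continuity of the nonlinearity $u \mapsto u^2$; but this is exactly the content of Proposition \ref{prop:FDKP op weak}, which in turn rests on the compact local embedding $\tilde{Y} \hookrightarrow L^4_{\mathrm{loc}}(\R^2)$ furnished by Proposition \ref{prop:anisotropic embeddings}(ii). Granted these inputs, Proposition \ref{prop:G weak} is a formal consequence, so I expect the proof in the paper to be short.
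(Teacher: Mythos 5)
Your proposal is correct and is essentially the paper's argument: the paper gives no separate proof of Proposition \ref{prop:G weak}, stating only that it ``follows from Corollary \ref{cor:spaces corollary} and Proposition \ref{prop:FDKP op weak}'', which is precisely the decomposition into bounded linear pieces plus the quadratic map that you spell out. Your identification of the weak continuity of $u \mapsto u^2$ as the only non-formal ingredient, resting on the compact local embedding, matches the paper's reasoning exactly.
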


%%% LEMMA: U_2 %%%
\begin{lemma}\label{lemma:u_2}
{  Define $U = \{u_1 \in X_1: |u_1|_\varepsilon \leq 1\}$.}
Equation \eref{eq:eqn with G} defines a map
\[U \ni u_1 \mapsto  u_2(u_1) \in X_2,\]
which satisfies
\[|\diff^k u_2[u_1]|_{X_1^k \to X_2} \lesssim \varepsilon |u_1|_\varepsilon^{2-k}, \qquad k = 0,1,2\]
(where by convention $|\diff^k u_2[u_1]|_{X_1^k \to X_2}$ is interpreted as
$|u_2(u_1)|_\varepsilon$ for $k=0$).
\end{lemma}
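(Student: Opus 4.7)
The strategy is to apply the contraction mapping principle (Lemma~\ref{lemma:fixed-point}) to the fixed-point equation $u_2 = G(u_1,u_2)$, taking $W_1 = X_1$ equipped with the scaled norm $|\cdot|_\varepsilon$, $W_2 = X_2$ with $|\cdot|_X$, $\overline{B}_1 = U$, and radius function $r(u_1) = K\varepsilon|u_1|_\varepsilon^2$ for a sufficiently large constant $K$. For each fixed $\varepsilon > 0$ the norms $|\cdot|_\varepsilon$ and $|\cdot|_X$ are equivalent on $X_1$ (since $\hat u_1$ is supported in the bounded set $C$), so $(X_1, |\cdot|_\varepsilon)$ is a Banach space and $G \colon X_1 \times X_2 \to X_2$ is smooth by Proposition~\ref{prop:G weak}. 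Working with $|\cdot|_\varepsilon$ rather than $|\cdot|_X$ on $X_1$ is essential for squeezing $\varepsilon$-factors out of the nonlinearity via Lemma~\ref{lemma:epsilon}.

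The analysis rests on three ingredients: Lemma~\ref{lemma:n} gives $n(\Diff)^{-1}(1-\chi(\Diff)) \colon Z \to X_2$ as a bounded operator; Lemma~\ref{lemma:epsilon} supplies the sharp bilinear estimate $|uv|_Z \lesssim \varepsilon|u|_\varepsilon|v|_X$ for $u \in X_1$, $v \in X$; and the chain $X \hookrightarrow H^s$, $H^s$ an algebra for $s > \tfrac{3}{2}$, $H^{s-1/2} \hookrightarrow Z$ yields the fallback estimate $|uv|_Z \lesssim |u|_X|v|_X$ for $u, v \in X$. Combining these gives $|G(u_1,0)|_X \lesssim |u_1^2|_Z \lesssim \varepsilon|u_1|_\varepsilon^2$, hence $|G(u_1,0)|_X \leq \tfrac{1}{2}r(u_1)$ for $K$ large, while for $u_1 \in U$, $u_2 \in \overline{B}_{r(u_1)}(0)$
\[
|\diff_2 G[u_1,u_2](v_2)|_X \lesssim \varepsilon^2|v_2|_X + \varepsilon|u_1|_\varepsilon|v_2|_X + |u_2|_X|v_2|_X \lesssim \varepsilon|v_2|_X,
\]
so $|\diff_2 G[u_1,u_2]|_{X_2 \to X_2} \leq \tfrac{1}{3}$ for all sufficiently small $\varepsilon$.

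Lemma~\ref{lemma:fixed-point} now produces the smooth map $u_1 \mapsto u_2(u_1)$ with $|u_2(u_1)|_X \leq r(u_1) \lesssim \varepsilon|u_1|_\varepsilon^2$, settling the case $k=0$. The derivative estimate
\[
|\diff_1 G[u_1,u_2](w_1)|_X \lesssim |(u_1+u_2)w_1|_Z \lesssim \bigl(\varepsilon|u_1|_\varepsilon + |u_2|_X\bigr)|w_1|_\varepsilon \lesssim \varepsilon|u_1|_\varepsilon|w_1|_\varepsilon
\]
(using $|w_1|_X \lesssim |w_1|_\varepsilon$ on $X_1$) then yields $|\diff u_2[u_1]|_{X_1 \to X_2} \lesssim \varepsilon|u_1|_\varepsilon$ via Lemma~\ref{lemma:fixed-point}, which is the case $k=1$. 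The remaining second-order operator norms---$|\diff_1^2 G|_{X_1^2 \to X_2} \lesssim \varepsilon$, $|\diff_1\diff_2 G|_{X_1 \times X_2 \to X_2} \lesssim \varepsilon$, $|\diff_2^2 G|_{X_2^2 \to X_2} \lesssim 1$---follow from the same bilinear estimates applied to the quadratic part of $G$, and inserting them together with the bound on $|\diff_1 G|$ into the second-derivative estimate of Lemma~\ref{lemma:fixed-point} gives
\[
|\diff^2 u_2[u_1]|_{X_1^2 \to X_2} \lesssim \varepsilon + \varepsilon\cdot\varepsilon|u_1|_\varepsilon + 1\cdot\varepsilon^2|u_1|_\varepsilon^2 \lesssim \varepsilon
\]
on $U$, which is the $k=2$ bound.

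The main obstacle is the term $\diff_1^2 G$, which pairs two factors from $X_1$: a naive Sobolev-algebra estimate delivers only $O(1)$, destroying the required $\varepsilon$-gain in $|\diff^2 u_2|$. The saving grace is Lemma~\ref{lemma:epsilon}, which supplies an $\varepsilon$-factor per $X_1$-factor---but only when $X_1$ is measured in the scaled norm $|\cdot|_\varepsilon$, which encodes the KP scaling and absorbs the small-frequency support of $X_1$-functions.
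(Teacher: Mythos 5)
Your proposal is correct and follows essentially the same route as the paper: the contraction mapping lemma applied to $u_2=G(u_1,u_2)$ with $W_1=(X_1,|\cdot|_\varepsilon)$, $W_2=(X_2,|\cdot|_X)$ and $r(u_1)\simeq\varepsilon|u_1|_\varepsilon^2$, with the $\varepsilon$-gains extracted exactly as in the paper from Lemma~\ref{lemma:n}, the bilinear estimate of Lemma~\ref{lemma:epsilon} and the Sobolev-algebra fallback $|v_2w_2|_Z\lesssim|v_2|_X|w_2|_X$. The derivative bookkeeping ($|\diff_1^2 G|\lesssim\varepsilon$, $|\diff_1\diff_2 G|\lesssim\varepsilon$, $|\diff_2^2 G|\lesssim 1$, combined through the second-derivative estimate of Lemma~\ref{lemma:fixed-point}) matches the paper's computation.
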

\proof
We apply Lemma \ref{lemma:fixed-point} to equation \eref{eq:eqn with G} with $W_1=(X_1,|\cdot|_\varepsilon)$
$W_2=(X_2,|\cdot|_X)$ and $F=G$. Note that
\begin{eqnarray*}
\diff_1 G[u_1,u_2](v_1) &= & -n(D)^{-1}{(1-\chi(\Diff) )(2(u_1+u_2)v_1)}, \nonumber \\
\diff_2 G[u_1,u_2](v_2) &= &- n(D)^{-1}{(1-\chi(\Diff))(\varepsilon^2 v_2 + 2(u_1+u_2)v_2)}
\end{eqnarray*}
and
\[
| (n(\Diff))^{-1} {(1-\chi(\Diff))} z|_X \lesssim |z|_Z
\]
(Lemma \ref{lemma:n}).
Using Lemmata \ref{lemma:global embeddings} and \ref{lemma:epsilon}, we therefore find that
\[
|G(u_1,0)|_X = |u_1^2|_Z \lesssim   \varepsilon  |u_1|_\varepsilon |u_1|_X
\lesssim \varepsilon |u_1|_\varepsilon |u_1|_{L^2} \leq \varepsilon |u_1|_\varepsilon^2
\]
and
\begin{eqnarray*}
|\diff_2  G[u_1,u_2](v_2)|_X
&\lesssim \varepsilon^2 |v_2|_Z + |u_1 v_2|_Z  + |u_2 v_2|_Z\nonumber \\
&\lesssim (\varepsilon^2 + \varepsilon |u_1|_\varepsilon + |u_2|_X) |v_2|_{X}.
\end{eqnarray*}
To satisfy the assumptions of Lemma~\ref{lemma:fixed-point},
we choose $r(u_1)=\sigma \varepsilon |u_1|_\varepsilon^2$ for a sufficiently large value
of $\sigma>0$, so that
\[|u_2|_X \lesssim \tfrac{1}{2}r(u_1), \qquad
 |\diff_2  G[u_1,u_2]|_{X_2 \to X_2}  \lesssim \varepsilon
\]
for $(u_1,u_2)  \in U \times \overline{B}_{r(u_1)}(0)$. The lemma asserts the existence
of a unique solution $u_2(u_1) \in \overline{B}_{r(u_1)}(0)$
of \eref{eq:eqn with G} for each $u_1 \in U$
which satisfies
\[
|u_2(u_1)|_X \lesssim \varepsilon | u_1 |_\varepsilon^2.
\]

Observe that
\begin{eqnarray*}
|\diff_1 G[u_1,u_2] (v_1)|_X &\lesssim & |u_1 v_1|_Z + |u_2 v_1|_Z\nonumber \\  
&\lesssim & \varepsilon  (|u_1|_X + |u_2|_X) |v_1|_\varepsilon\nonumber \\
&\lesssim&  \varepsilon (|u_1|_\varepsilon + \varepsilon |u_1|_\varepsilon^2) |v_1|_\varepsilon,
\end{eqnarray*}
and similarly
\begin{eqnarray*}
|\diff_1^2 G[u_1,u_2] (v_1,w_1)|_X \lesssim |v_1 w_1|_Z \lesssim \varepsilon |v_1|_\varepsilon |w_1|_\varepsilon,\nonumber \\
|\diff_2^2 G[u_1,u_2] (v_2,w_2)|_X \lesssim |v_2 w_2|_Z \lesssim |v_2|_X |w_2|_X,\nonumber \\
|\diff_1 \diff_2 G[u_1,u_2] (v_1,v_2)|_X \lesssim |v_1 v_2|_Z \lesssim \varepsilon |v_1|_\varepsilon |v_2|_X.
\end{eqnarray*}
Combining these estimates in the fashion indicated in Lemma \ref{lemma:fixed-point}, one finds that
\[
|u_1|_\varepsilon^{-2} |u_2(u_1)|_X + |u_1|_\varepsilon^{-1} |\diff u_2[u_1] |_{X_1 \to X_2} + |\diff^2 u_2[u_1] |_{X_1^2 \to X_2} \lesssim \varepsilon.\hspace{0.52cm}\Box
\]

Our next result shows in particular that $u=u_1+u_2(u_1)$ belongs to $H^\infty(\R^2)$
for each $u_1 \in U_1$.

\begin{proposition}
Any function $u=u_1+u_2 \in X_1 \oplus X_2$ which satisfies \eref{eq:eqn with G}
belongs to $H^\infty(\R^2)$.
\end{proposition}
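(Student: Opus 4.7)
The plan is to bootstrap the regularity of $u_2$ in the standard Sobolev scale using the fixed-point equation \eref{eq:eqn with G}, while exploiting the fact that $u_1$ is automatically a smooth function because $\hat u_1$ has compact support.

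\emph{Smoothness of $u_1$.} Since $u_1 \in X_1 = \chi(\Diff)X \subset L^2(\R^2)$ and $\supp \hat u_1 \subseteq C$, for every integer $m \geq 0$ one has
\[
|u_1|_{H^m(\R^2)}^2 \leq \Bigl(\sup_{k \in C}\langle k \rangle^{2m}\Bigr)|u_1|_{L^2(\R^2)}^2 < \infty,
\]
so $u_1 \in H^\infty(\R^2)$.

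\emph{Smoothing estimate.} The pointwise lower bounds for $n(k)$ derived in the proof of Lemma~\ref{lemma:n} --- namely $n(k) \gtrsim |k|^{3/2}/|k_1| \gtrsim |k|^{1/2}$ on $\{|k| \geq \delta\}$ and $n(k) \gtrsim 1$ on $\{|k| \leq \delta,\, |k_2/k_1| \geq \delta\}$ --- combine to give
\[
\frac{1-\chi(k)}{n(k)} \lesssim \langle k\rangle^{-1/2}, \qquad k \in \R^2.
\]
Consequently $n(\Diff)^{-1}(1-\chi(\Diff))$ maps $H^s(\R^2)$ continuously into $H^{s+1/2}(\R^2)$ for every $s \in \R$.

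\emph{Bootstrap.} Lemma~\ref{lemma:global embeddings}(i) yields $u \in X \hookrightarrow H^s(\R^2)$ for the fixed Sobolev index $s > \tfrac{3}{2}$. Because $s > 1$, the space $H^s(\R^2)$ is a Banach algebra, so $(u_1 + u_2)^2 \in H^s$ and trivially $\varepsilon^2 u_2 \in H^s$. Applying the smoothing estimate to
\[
u_2 = -n(\Diff)^{-1}(1-\chi(\Diff))\bigl(\varepsilon^2 u_2 + (u_1 + u_2)^2\bigr),
\]
we obtain $u_2 \in H^{s+1/2}(\R^2)$, and thus $u = u_1 + u_2 \in H^{s+1/2}(\R^2)$. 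Iterating this argument produces $u \in H^{s+j/2}(\R^2)$ for every $j \in \N$, whence $u \in H^\infty(\R^2)$.

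\emph{Main obstacle.} The only mildly delicate ingredient is the smoothing estimate in the second step, particularly the low-frequency regime $\{|k| \leq \delta,\, |k_2/k_1| \geq \delta\}$ on which $n(k)$ must be bounded away from zero; however, this is essentially already contained in the analysis carried out for Lemma~\ref{lemma:n}, and everything else is a routine bootstrap.
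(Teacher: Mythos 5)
Your proof is correct and follows essentially the same route as the paper: a bootstrap gaining half a derivative per step from the algebra property of $H^s(\R^2)$ ($s>\tfrac32$) together with the smoothing of $n(\Diff)^{-1}(1-\chi(\Diff))$, the only difference being that you encode the gain as the pointwise symbol bound $(1-\chi(k))/n(k)\lesssim \langle k\rangle^{-1/2}$ in the plain Sobolev scale, while the paper packages it as the chain $X_2^s\hookrightarrow (1-\chi(\Diff))H^s(\R^2)\hookrightarrow Z_2^{s+1/2}$ combined with the isomorphism $n(\Diff)\colon X_2^{s+1/2}\to Z_2^{s+1/2}$. Both versions rest on the same lower bounds for $n(k)$ on the complement of the cone $C$, so nothing essential is changed.
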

\proof Obviously $u_1 \in H^\infty(\R^2)$, and to show that $u_2$ is also smooth we
indicate the regularity index $s$ in the spaces $X_2$ and $Z_2$ explicitly.
Since $H^{s}(\R^2)$ is an algebra for $s > \frac{3}{2}$ and $X_2^s \hookrightarrow
(1-\chi(D))H^s(\R^2) \hookrightarrow Z_2^{s+\frac{1}{2}}$ (see Lemma
\ref{lemma:global embeddings}(i)), the mapping
\[
X_1 \oplus X_2^{s} \ni (u_1, u_2) \mapsto -(1-\chi(\Diff) ) \left( \varepsilon^2 u_2 +  (u_1 + u_2)^2 \right) \in Z_2^{s+\frac{1}{2}}
\]
is continuous. It follows that $u_2 \in X_2^{s+\frac{1}{2}}$ because $n(D)$ is an isomorphism $X_2^{s+\frac{1}{2}} \to Z_2^{s+\frac{1}{2}}$ (see Lemma \ref{lemma:n}). Bootstrapping this argument yields
$u_2 \in X_2^s \subset H^s(\R^2)$ for any $s \in \R$.\qed

The (smooth)
reduced variational functional ${\mathcal J}_\varepsilon: U \to \R$ is defined by
\begin{eqnarray*}
\afl{\mathcal J}_\varepsilon(u_1) & := & {\mathcal I}_\varepsilon(u_1+u_2(u_1)), \nonumber \\
\afl& = & \frac{1}{2}\int_{\R^2} \left( \varepsilon^2 u_1^2 + \varepsilon^2 u_2(u_1)^2 + (n(\Diff)^{\frac{1}{2}} u_1)^2 +
(n(\Diff)^{\frac{1}{2}} u_2(u_1))^2  \right) \dx \dy \nonumber \\
\afl && \qquad\mbox{} + \frac{1}{3} \int_{\R^2} (u_1+u_2(u_1))^3 \dx \dy
\end{eqnarray*}
(recall that $\langle u_1, u_2(u_1) \rangle_{L^2}=0)$),
where $\diff {\mathcal I}_\varepsilon[u_1+u_2(u_1)](v_2)=0$ for all $v_2 \in X_2$ by construction.
It follows that
\begin{eqnarray*}
\diff {\mathcal J}_\varepsilon[u_1](v_1) & = &\diff {\mathcal I}_\varepsilon[u_1+u_2(u_1)](v_1)+\diff {\mathcal I}_\varepsilon[u_1+u_2(u_1)](\diff u_2[u_1](v_1))\nonumber \\
& = & \diff {\mathcal I}_\varepsilon[u_1+ u_2(u_1)](v_1)
\end{eqnarray*}
for all $v_1 \in X_1$, so that each critical point $u_1$ of ${\mathcal J}_\varepsilon$ defines a critical point $u_1+u_2(u_1)$ of ${\mathcal I}_\varepsilon$.
Conversely, each critical point $u=u_1+u_2$ of ${\mathcal I}_\varepsilon$ with $u_1 \in U$
has the properties
that $u_2=u_2(u_1)$ and $u_1$ is a critical point of ${\mathcal J}_\varepsilon$.

\begin{lemma}\label{lemma:tilde J}
The reduced functional ${\mathcal J}_\varepsilon: U \to \R$ satisfies
\[
\afl
{\mathcal J}_\varepsilon(u_1) = \frac{1}{2}\int_{\R^2} \left( \varepsilon^2 u_1^2+ (n(\Diff)^{\frac{1}{2}} u_1)^2  \right) \dx \dy + \frac{1}{3} \int_{\R^2} u_1^3 \dx \dy + \mathcal{R}_\varepsilon(u_1),
\]
where
\[|\diff^k \mathcal R_\varepsilon(u_1)|_{X_1^k \to \R} \lesssim \varepsilon^2 |u_1|_\varepsilon^{4-k}, \qquad
k=0,1,2.\]
\end{lemma}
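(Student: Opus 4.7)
The plan is to expand ${\mathcal J}_\varepsilon(u_1) = {\mathcal I}_\varepsilon(u_1 + u_2(u_1))$, isolate the stated leading-order terms, and show that the explicit remainder can be estimated term-by-term using the machinery already developed in Section~2. Writing $u_2 := u_2(u_1)$ and exploiting the fact that $u_1$ and $u_2$ have disjoint Fourier supports (inside, respectively outside, the region $C$), Parseval's theorem gives $\langle u_1, u_2\rangle_{L^2} = 0$ and $\langle n(\Diff)^{1/2} u_1, n(\Diff)^{1/2} u_2\rangle_{L^2} = \langle u_1, n(\Diff) u_2\rangle_{L^2} = 0$, so all quadratic cross-terms in ${\mathcal I}_\varepsilon(u_1+u_2)$ vanish. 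Expanding the cubic term as $(u_1+u_2)^3 = u_1^3 + 3 u_1^2 u_2 + 3 u_1 u_2^2 + u_2^3$ then yields
\[
{\mathcal R}_\varepsilon(u_1) = \tfrac{1}{2}\!\int_{\R^2}\! \bigl(\varepsilon^2 u_2^2 + (n(\Diff)^{1/2} u_2)^2\bigr)\dx\dy + \int_{\R^2}\! \bigl(u_1^2 u_2 + u_1 u_2^2 + \tfrac{1}{3} u_2^3\bigr)\dx\dy.
\]

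For the $k=0$ bound I estimate each of these five integrals in turn. The essential ingredients are: Lemma~\ref{lemma:u_2} with $|u_2|_X \lesssim \varepsilon |u_1|_\varepsilon^2$; the product estimate $|u_1 v|_Z \lesssim \varepsilon |u_1|_\varepsilon |v|_X$ from Lemma~\ref{lemma:epsilon}; the embeddings $X, Z \hookrightarrow L^2(\R^2)$ and $X \hookrightarrow \mathrm{BC}(\R^2)$ from Lemma~\ref{lemma:global embeddings}; and the identity $\int (n(\Diff)^{1/2} u_2)^2\dx\dy = \langle u_2, n(\Diff) u_2\rangle_{L^2} \leq |u_2|_{L^2}|n(\Diff)u_2|_{L^2} \lesssim |u_2|_X^2$, which follows from Lemma~\ref{lemma:n}. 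I also use the elementary bound $|v|_X \lesssim |v|_\varepsilon$ on $X_1$ (valid for $\varepsilon \leq 1$, because the support condition $k \in C$ forces $|k|^{2s}$ and $k_2^4/k_1^2$ to be dominated by $1 + k_1^2 + k_2^2/k_1^2$). Each of the five contributions is then $\lesssim \varepsilon^2 |u_1|_\varepsilon^4$, with the constraint $|u_1|_\varepsilon \leq 1$ on $U$ absorbing the extra powers of $\varepsilon$ and $|u_1|_\varepsilon$ produced by the higher-order terms $\int u_1 u_2^2$ and $\int u_2^3$.

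For the derivative estimates I use the relation $\diff {\mathcal J}_\varepsilon[u_1](v_1) = \diff {\mathcal I}_\varepsilon[u_1 + u_2(u_1)](v_1)$ already established in the excerpt, which together with the orthogonality of $X_1$ and $X_2$ gives the compact formula
\[
\diff {\mathcal R}_\varepsilon[u_1](v_1) = \int_{\R^2} (2 u_1 u_2 + u_2^2) v_1 \dx\dy.
\]
Differentiating once more and writing $u_2' := \diff u_2[u_1](w_1)$ produces $\diff^2 {\mathcal R}_\varepsilon[u_1](v_1,w_1) = 2\!\int (w_1 u_2 + u_1 u_2' + u_2 u_2') v_1 \dx \dy$; the bounds $|u_2'|_X \lesssim \varepsilon |u_1|_\varepsilon |w_1|_\varepsilon$ and $|\diff^2 u_2[u_1]|_{X_1^2 \to X_2} \lesssim \varepsilon$ from Lemma~\ref{lemma:u_2} let me estimate every such term by exactly the same toolkit as in the preceding paragraph, delivering $|\diff^k {\mathcal R}_\varepsilon[u_1]|_{X_1^k \to \R} \lesssim \varepsilon^2 |u_1|_\varepsilon^{4-k}$ for $k=1,2$.

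The proof is essentially a bookkeeping exercise rather than a genuine obstacle: the tailored spaces $X$, $Z$, the scaled norm $|\cdot|_\varepsilon$, and the product estimate of Lemma~\ref{lemma:epsilon} have been designed precisely so that every pairing of an $X_1$-factor with another function extracts a factor of $\varepsilon$. The only point requiring care is to route each estimate through Lemma~\ref{lemma:epsilon} whenever possible, thereby collecting the two factors of $\varepsilon$ which combine with $|u_2|_X \lesssim \varepsilon|u_1|_\varepsilon^2$ to produce the target bound $\varepsilon^2 |u_1|_\varepsilon^{4-k}$.
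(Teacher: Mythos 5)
Your proposal is correct, and its backbone — expand ${\mathcal I}_\varepsilon(u_1+u_2(u_1))$, remove the quadratic cross-terms by Fourier-support orthogonality, and estimate the five remainder integrals using Lemma~\ref{lemma:u_2}, the product estimate of Lemma~\ref{lemma:epsilon} and the embeddings of Lemma~\ref{lemma:global embeddings} — is exactly the paper's. You deviate in two places, both legitimately, and both in the direction of streamlining. First, for the term $\frac12\int(n(\Diff)^{1/2}u_2)^2$ the paper does not estimate directly: it substitutes the equation $\varepsilon^2u_2+n(\Diff)u_2+(1-\chi(\Diff))(u_1+u_2)^2=0$ satisfied by $u_2$, which rewrites this term as a combination of the four already-estimated ones. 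Your direct route $\langle u_2,n(\Diff)u_2\rangle_{L^2}\le|u_2|_{L^2}|n(\Diff)u_2|_{L^2}\lesssim|u_2|_X^2\lesssim\varepsilon^2|u_1|_\varepsilon^4$ is equally valid, since $n(\Diff)$ maps $X_2$ boundedly into $Z_2\hookrightarrow L^2(\R^2)$. Second, for the derivative bounds the paper applies Leibniz's rule to each $K_i$ and tracks every term involving $\diff^ju_2[u_1]$, whereas you exploit the already-established identity $\diff{\mathcal J}_\varepsilon[u_1](v_1)=\diff{\mathcal I}_\varepsilon[u_1+u_2(u_1)](v_1)$ (i.e.\ the stationarity of ${\mathcal I}_\varepsilon$ in the $X_2$ directions) to collapse the first derivative to $\int_{\R^2}(2u_1u_2+u_2^2)v_1\dx\dy$ and then differentiate that compact formula once more. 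This is shorter, avoids differentiating the $n(\Diff)^{1/2}u_2$ term entirely, and your resulting estimates (each pairing of an $X_1$-factor with an $L^2$-factor extracting a factor of $\varepsilon$ via Lemma~\ref{lemma:epsilon}, with $|u_1|_\varepsilon\le1$ and $\varepsilon\le1$ absorbing the surplus powers from the higher-order terms) do deliver $\varepsilon^2|u_1|_\varepsilon^{4-k}$ for $k=0,1,2$. What the paper's term-by-term Leibniz computation buys in exchange for its extra length is a uniform template that it reuses verbatim for the analogous remainders $K_6$, $K_7$ in Lemma~\ref{lemma:cov to KP}, where no variational shortcut is available.
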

\proof
Observe that
\[\mathcal{R}_\varepsilon(u_1)=\tfrac{1}{2}\varepsilon^2 K_1(u_1)+K_2(u_1)+K_3(u_1)+\tfrac{1}{3}K_4(u_1)+\tfrac{1}{2}K_5(u_1),\]
where
\[
\afl K_1(u_1) = |u_2(u_1)|_{L^2}^2, \quad
K_2(u_1) = \langle u_1^2, u_2(u_1) \rangle_{L^2}, \quad K_3(u_1) = \langle u_1 u_2(u_1), u_2(u_1)\rangle_{L^2},
\]
\[K_4(u_1) = \langle u_2(u_1)^2, u_2(u_1) \rangle_{L^2}, \quad
K_5(u_1) = | (n(\Diff))^{\frac{1}{2}} u_2(u_1)|_{L^2}^2.
\]
We investigate each of these quantities using the estimates
\begin{eqnarray*}
{ 
\afl |\diff^j u_2 [u_1]({\bf v})|_{\mathrm{BC} \cap L^2}
}
& \lesssim & |\diff^j u_2 [u_1]({\bf v})|_{X} \lesssim \varepsilon |u_1|_\varepsilon^{2-j} |v_1|_\varepsilon \cdots |v_j|_{\varepsilon}, \qquad j=0,1,2,\nonumber \\
\afl \hspace{8mm}|\diff^j u_1[u_1]({\bf v})|_{\mathrm{BC}} &\lesssim & \varepsilon |u_1|_\varepsilon^{1-j} |v_1|_\varepsilon \cdots |v_j|_\varepsilon, \qquad j=0,1,\nonumber \\
\afl \hspace{8mm}|\diff^j u_1[u_1]({\bf v})|_{L^2} &\lesssim & |u_1|_\varepsilon^{1-j} |v_1|_\varepsilon \cdots |v_j|_\varepsilon,
\qquad j=0,1,
\end{eqnarray*}
where ${\bf v} = (v_1, \ldots, v_j)$ denotes a general element in $X_1^j$, and of course
\[\diff^2 u_1 [u_1] = 0.\]

Using Leibniz's rule, H\"{o}lder's inequality and the basic estimate
$|\langle w \cdot,\cdot \rangle_{L^2}| \leq |w|_{\mathrm{BC}} |\langle \cdot,\cdot \rangle_{L^2}|$,
one finds that
\begin{eqnarray*}
\afl\left| \diff^k K_1[u_1]({\bf v}) \right| &\lesssim \sum_{j=0}^k \left|  \langle \diff^j u_2 [u_1]({\bf v}), \diff^{k-j} u_2 [u_1]({\bf v}) \rangle_{L^2} \right|  \nonumber \\
&\lesssim \sum_{j=0}^k   |\diff^j u_2 [u_1]({\bf v})|_{L^2}  |\diff^{k-j} u_2 [u_1]({\bf v})|_{L^2}\nonumber \\
&\lesssim  \varepsilon^2 \sum_{j=0}^k  |u_1|_{\varepsilon}^{2-j} |u_1|_{\varepsilon}^{2-(k-j)} |v_1|_{\varepsilon} \cdots  |v_k|_{\varepsilon}, 
\end{eqnarray*}
\begin{eqnarray*}
\afl\left| \diff^k K_2[u_1]({\bf v}) \right| &\lesssim  \sum_{0 \leq j + l \leq k} \left|  \langle \diff^j u_1 [u_1]({\bf v}) \diff^l u_1 [u_1]({\bf v}), \diff^{k-j -l} u_2 [u_1]({\bf v}) \rangle_{L^2} \right|  \nonumber \\
&\lesssim   \varepsilon\sum_{0 \leq j + l \leq k} |u_1|_\varepsilon^{1-j}   | \diff^l u_1 [u_1]({\bf v})|_{L^2} |\diff^{k-j -l} u_2 [u_1]({\bf v})|_{L^2}  |v_1|_\varepsilon \cdots |v_j|_{\varepsilon} \nonumber \\
&\lesssim \varepsilon^3 |u_1|_\varepsilon^{4-k}  |v_1|_\varepsilon \cdots |v_k|_{\varepsilon},
\end{eqnarray*}
\begin{eqnarray*}
\afl\left| \diff^k K_3[u_1]({\bf v}) \right| &\lesssim  \sum_{0 \leq j + l \leq k} \left|  \langle \diff^j u_1 [u_1]({\bf v}) \diff^l u_2 [u_1]({\bf v}), \diff^{k-j -l} u_2 [u_1]({\bf v}) \rangle_{L^2} \right|  \nonumber \\
&\lesssim   \varepsilon\sum_{0 \leq j + l \leq k} |u_1|_\varepsilon^{1-j}   | \diff^l u_1 [u_1]({\bf v})|_{L^2} |\diff^{k-j -l} u_2 [u_1]({\bf v})|_{L^2}  |v_1|_\varepsilon \cdots |v_j|_{\varepsilon} \nonumber \\
&\lesssim \varepsilon^3 |u_1|_\varepsilon^{5-k}  |v_1|_\varepsilon \cdots |v_k|_{\varepsilon}
\end{eqnarray*}
and
\begin{eqnarray*}
\afl\left| \diff^k K_4[u_1]({\bf v}) \right| &\lesssim  \sum_{0 \leq j + l \leq k} \left|  \langle \diff^j u_2 [u_1]({\bf v}) \diff^l u_2 [u_1]({\bf v}), \diff^{k-j -l} u_2 [u_1]({\bf v}) \rangle_{L^2} \right|  \nonumber \\
&\lesssim   \varepsilon\sum_{0 \leq j + l \leq k} |u_1|_\varepsilon^{2-j}   | \diff^l u_2 [u_1]({\bf v})|_{X} |\diff^{k-j -l} u_2 [u_1]({\bf v})|_X  |v_1|_\varepsilon \cdots |v_j|_{\varepsilon} \nonumber \\
&\lesssim \varepsilon^3 |u_1|_\varepsilon^{6-k} |v_1|_\varepsilon \cdots |v_k|_{\varepsilon}
\end{eqnarray*}
for $k=0,1,2$.

Finally, since $u_2$ solves \eref{eq:system ii}, one obtains
\begin{eqnarray*}
K_5(u_1) &= | (n(\Diff))^{\frac{1}{2}} u_2(u_1)|_{L^2}^2\nonumber \\ 
&= \langle n(\Diff) u_2(u_1), u_2(u_1) \rangle_{L^2}\nonumber \\ 
&= - \varepsilon^2 |u_2(u_1)|_{L^2}^2 - \langle (u_1 + u_2(u_1))^2, u_2(u_1) \rangle_{L^2}\nonumber \\
&= - \varepsilon^2 K_1(u_1)  - K_2(u_1) -  2K_3(u_1) -  K_4(u_1),
\end{eqnarray*}
all of which terms have been estimated.
\qed

The next step is to convert ${\mathcal J}_\varepsilon$ into a perturbation of the KP-I
functional, the main issue being the replacement of $n(k)$ by $\tilde{n}(k)$.

\begin{proposition}\label{prop:I1}
The Fourier multiplier $(n/\tilde{n})^\frac{1}{2}$ defines an isomorphism\linebreak
$I_1:\chi(\Diff)L^2(\R^2) \to \chi(\Diff)L^2(\R^2)$ for sufficiently small values of $\delta$.
\end{proposition}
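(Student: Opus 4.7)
I would prove this by showing that the symbol $(n/\tilde n)^{1/2}$ is bounded above and below away from zero on the cone $C$ when $\delta$ is small; once that is established, the isomorphism property on $\chi(\Diff)L^2(\R^2)$ is immediate because $I_1$ is then a bounded multiplication operator on the Fourier side with bounded inverse $(\tilde n/n)^{1/2}$.

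The main analytic input is the expansion \eref{eq:FDKP to KP}, which gives $m(k)-\tilde m(k)=O(|(k_1,k_2/k_1)|^4)$, hence
\[
n(k)-\tilde n(k)=O\bigl(|(k_1,k_2/k_1)|^4\bigr)
\qquad \text{as } (k_1,k_2/k_1)\to 0.
\]
On the other hand $\tilde n(k)=\tfrac{1}{2}(\beta-\tfrac{1}{3})k_1^2+k_2^2/k_1^2 \simeq |(k_1,k_2/k_1)|^2$, and this is the same quantity whose fourth power controls the remainder. The plan is therefore to divide: on $C$ one has
\[
\frac{n(k)}{\tilde n(k)} = 1 + \frac{n(k)-\tilde n(k)}{\tilde n(k)} = 1 + O\bigl(|(k_1,k_2/k_1)|^2\bigr) = 1+O(\delta^2),
\]
because the defining inequalities of $C$ force $k_1^2+k_2^2/k_1^2\le 2\delta^2$.

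Choosing $\delta$ small enough, this ratio is therefore pinched in, say, the interval $[\tfrac12,\tfrac32]$ on $C\setminus\{0\}$ (note that $\tilde n>0$ on $C\setminus\{k_1=0\}$ and this latter set is null, so pointwise positivity suffices). Consequently $(n/\tilde n)^{1/2}$ is a real, measurable function with
\[
\tfrac{1}{\sqrt{2}}\,\chi(k) \;\le\; (n(k)/\tilde n(k))^{1/2}\chi(k)\;\le\;\sqrt{\tfrac{3}{2}}\,\chi(k).
\]
Multiplication by an $L^\infty$ symbol that is bounded below by a positive constant is an isomorphism on $\chi(\Diff)L^2(\R^2)\cong L^2(C)$ via Plancherel's theorem, its inverse being the Fourier multiplier $(\tilde n/n)^{1/2}\chi$. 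This establishes the claim.

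The only real obstacle is verifying the precise size of the remainder $m-\tilde m$, but this has already been argued earlier in the paper from the analyticity of $m$ as a function of $k_1$ and $k_2/k_1$. With that expansion in hand, the rest is a short symbol calculation, so the statement is essentially a direct consequence of \eref{eq:FDKP to KP} combined with the lower bound $\tilde n\simeq|(k_1,k_2/k_1)|^2$.
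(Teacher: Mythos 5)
Your argument is correct and coincides with the paper's own proof: both use the expansion $n(k)=\tilde n(k)+\bigO(|(k_1,k_2/k_1)|^4)$ together with $\tilde n(k)\eqsim|(k_1,k_2/k_1)|^2$ to conclude that $|n(k)/\tilde n(k)-1|\lesssim\tilde n(k)\lesssim\delta^2$ on $C$, whence the symbol and its reciprocal are bounded there for small $\delta$. The additional remarks on Plancherel and the null set $\{k_1=0\}$ are fine but not needed beyond what the paper records.
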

\proof
Using the elementary estimates
\[n(k)=\tilde{n}(k) + \bigO(|(k_1,\tfrac{k_2}{k_1})|^4), \qquad
\tilde{n}(k) \eqsim |(k_1,\tfrac{k_2}{k_1})|^2\]
as $(k_1,\frac{k_2}{k_1}) \to 0$, we find that
\[\left|\frac{n(k)}{\tilde n(k)}-1\right| \lesssim \tilde n(k) \lesssim \delta^2,\]
and hence
\[\left(\frac{n(k)}{\tilde{n}(k)}\right)^{\frac{1}{2}} \eqsim 1\]
for $k \in C$, for sufficiently small values of $\delta$.
\qed

We now express the reduced functional in terms of $\tilde u_1=(\frac{n}{\tilde{n}})^\frac{1}{2}u_1$;
to this end define
$\widetilde{\mathcal J}_\varepsilon(\tilde{u}_1)={\mathcal J}_\varepsilon(u_1(\tilde{u}_1))$
and note that $\widetilde{\mathcal J}_\varepsilon$ is a smooth functional $\tilde{U} \to \R$,
where $\tilde{U}=\{u_1 \in X_1: |u_1|_{\varepsilon} \leq \tilde{\tau}\}$ and $\tilde{\tau} \in (0,1)$ is chosen so that 
$\tilde{U} \subseteq I_1[U]$.

 \begin{lemma}\label{lemma:cov to KP}
The reduced functional $\widetilde{\mathcal J}_\varepsilon: \tilde{U} \to \R$ satisfies
\[
\afl\tilde{\mathcal{J}}_\varepsilon(\tilde u_1) = \frac{1}{2}\int_{\R^2} \left( \varepsilon^2 \tilde u_1^2+ (\tilde n(\Diff)^{\frac{1}{2}} \tilde u_1)^2  \right) \dx \dy { +} \frac{1}{3} \int_{\R^2} \tilde u_1^3 \dx \dy  + \tilde{\mathcal{R}}_\varepsilon(\tilde u_1),
\]
where
\[
|\diff^k \tilde{\mathcal R}_\varepsilon[\tilde u_1]|_{X_1^k \to \R} \lesssim \varepsilon^2 |u_1|_\varepsilon^{3-k} + \varepsilon^4 |u_1|_\varepsilon^{2-k}, \qquad k = 0,1,2.\]
 \end{lemma}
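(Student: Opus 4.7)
The approach is to substitute the linear change of variable $u_1 = I_1^{-1}\tilde u_1 = (\tilde n/n)^{1/2}(\Diff)\tilde u_1$ into the formula supplied by Lemma \ref{lemma:tilde J} and to identify the new remainder $\tilde{\mathcal R}_\varepsilon$. By Plancherel, the $n$-kinetic energy $\int (n(\Diff)^{1/2}u_1)^2\,\dx\dy = \int n(k)|\widehat{u_1}(k)|^2\dk$ transforms exactly into $\int \tilde n(k)|\widehat{\tilde u_1}(k)|^2\dk = \int(\tilde n(\Diff)^{1/2}\tilde u_1)^2\,\dx\dy$, so this term contributes no error. Consequently, $\tilde{\mathcal R}_\varepsilon(\tilde u_1)$ decomposes into three pieces: a quadratic mass correction $Q(\tilde u_1) = \tfrac{1}{2}\varepsilon^2(|u_1|_{L^2}^2 - |\tilde u_1|_{L^2}^2)$, a cubic correction $C(\tilde u_1) = \tfrac{1}{3}(\langle u_1^2,u_1\rangle_{L^2} - \langle\tilde u_1^2,\tilde u_1\rangle_{L^2})$, and the pre-existing remainder $\mathcal R_\varepsilon(u_1(\tilde u_1))$.

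For the mass correction, Plancherel gives $Q(\tilde u_1) = \tfrac{1}{2}\varepsilon^2\int[(\tilde n/n)(k)-1]|\widehat{\tilde u_1}|^2\dk$. Combining $n = \tilde n + \bigO(|(k_1,k_2/k_1)|^4)$ with $\tilde n \eqsim |(k_1,k_2/k_1)|^2$ yields the sharp estimate $|\tilde n/n - 1| \lesssim |(k_1,k_2/k_1)|^2$ on $C$, and the outer $\varepsilon^2$ pairs with the weight $\varepsilon^{-2}(k_1^2 + k_2^2/k_1^2)$ in $|\cdot|_\varepsilon$ to yield $|Q(\tilde u_1)| \lesssim \varepsilon^4|\tilde u_1|_\varepsilon^2$. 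Since $Q$ is quadratic in $\tilde u_1$, its first and second Fr\'echet derivatives inherit the corresponding bounds with exponent $|\tilde u_1|_\varepsilon^{2-k}$.

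For the cubic correction I would write $u_1 = \tilde u_1 + v$ with $v = (I_1^{-1} - \mathrm{Id})\tilde u_1$, whose Fourier symbol obeys the analogous bound $|(\tilde n/n)^{1/2} - 1| \lesssim |(k_1,k_2/k_1)|^2$. Repeating the weighted Plancherel argument gives $|v|_{L^2} \lesssim \varepsilon|\tilde u_1|_\varepsilon$ and $|v|_\varepsilon \lesssim |\tilde u_1|_\varepsilon$; Lemma \ref{lemma:epsilon} then yields $|v|_{L^\infty}, |\tilde u_1|_{L^\infty} \lesssim \varepsilon|\tilde u_1|_\varepsilon$, and the anisotropic embedding $\tilde Y \hookrightarrow L^4(\R^2)$ of Proposition \ref{prop:anisotropic embeddings}, combined with $\varepsilon$-interpolation, gives $|\tilde u_1|_{L^4}^2 \lesssim \varepsilon|\tilde u_1|_\varepsilon^2$. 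Expanding $u_1^3 - \tilde u_1^3 = 3\tilde u_1^2 v + 3\tilde u_1 v^2 + v^3$ and applying H\"older, the leading contribution is $|\langle\tilde u_1^2, v\rangle_{L^2}| \leq |\tilde u_1|_{L^4}^2|v|_{L^2} \lesssim \varepsilon^2|\tilde u_1|_\varepsilon^3$, while the remaining terms decay faster; polarising the trilinear form gives analogous bounds for the first and second derivatives of $C$ with power $|\tilde u_1|_\varepsilon^{3-k}$.

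The transported old remainder is handled by the chain rule: $I_1^{-1}$ is a Fourier multiplier with symbol comparable to unity on $C$, so $|u_1|_\varepsilon \eqsim |\tilde u_1|_\varepsilon$ and Lemma \ref{lemma:tilde J} delivers $|\diff^k(\mathcal R_\varepsilon \circ I_1^{-1})[\tilde u_1]|_{X_1^k \to \R} \lesssim \varepsilon^2|\tilde u_1|_\varepsilon^{4-k}$, which is absorbed into the claimed $\varepsilon^2|\tilde u_1|_\varepsilon^{3-k}$ term because $|\tilde u_1|_\varepsilon \leq \tilde\tau < 1$ on $\tilde U$. The main technical obstacle will be propagating the cubic-correction estimates uniformly to the polarised trilinear derivatives, where one must arrange at each differentiation step for one factor to be estimated in $L^4$ (yielding a power of $\varepsilon^{1/2}$ through the interpolation) and one in $L^2$ (yielding the sharp $\varepsilon$ gain from $|v|_{L^2} \lesssim \varepsilon|\tilde u_1|_\varepsilon$), so that the combined $\varepsilon^2$ power appears on every derivative.
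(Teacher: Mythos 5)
Your proposal is correct and follows essentially the same route as the paper: the kinetic term cancels exactly by construction, the quadratic correction is bounded by $\varepsilon^4|\tilde u_1|_\varepsilon^{2-k}$ via the weighted Plancherel estimate $|1-n/\tilde n|\lesssim\tilde n$, the cubic correction gains one factor of $\varepsilon$ from the multiplier difference in $L^2$ and one from Lemma~\ref{lemma:epsilon}, and the old remainder is transported through the isomorphism $I_1$. Your use of $|\tilde u_1|_{L^4}^2\lesssim\varepsilon|\tilde u_1|_\varepsilon^2$ in place of the paper's direct $\mathrm{BC}\times L^2\times L^2$ pairing, and the binomial rather than telescoping decomposition of $u_1^3-\tilde u_1^3$, are only cosmetic variations.
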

\proof 
By construction
\[\int_{\R^2} (n(D)^\frac{1}{2}u_1)^2 \dx\dy = \int_{\R^2} (\tilde{n}(D)^\frac{1}{2}\tilde{u}_1)^2 \dx\dy,\]
and furthermore
\[
|\diff^k {\mathcal R}_\varepsilon[\tilde u_1]|_{X_1^k \to \R} \lesssim \varepsilon^2 |\tilde u_1|_\varepsilon^{4-k},
\qquad k=0,1,2,
\]
because $u_1 \mapsto \tilde{u}_1$ is an isomorphism $X_1 \to X_1$ (here we have abbreviated
${\mathcal R}_\varepsilon(u_1(\tilde{u}_1))$ to
${\mathcal R}_\varepsilon(\tilde{u}_1)$). It remains to estimate the differences
\[\int_{\R^2} u_1^2\dx\dy  - \int_{\R^2} \tilde u_1^2\dx\dy, \qquad
\int_{\R^2} u_1^3\dx\dy -  \int_{\R^2} \tilde u_1^3\dx\dy\]
using the formulae
\[\afl u_1(\tilde{u}_1) = \left( \frac{\tilde n(\Diff)}{n(\Diff)}\right)^{\frac{1}{2}} \tilde{u}_1, \qquad
\diff u_1[\tilde{u}_1](v_1) = \left( \frac{\tilde n(\Diff)}{n(\Diff)}\right)^{\frac{1}{2}} v_1, \qquad
\diff^2 u_1[\tilde{u}_1]=0.\]

Observe that
\begin{eqnarray*}
\left| \left|1- \frac{n(\Diff)}{\tilde n(\Diff)}\right|^{\frac{1}{2}} w_1 \right|_{L^2}^2
&\lesssim | \tilde n(D)^\frac{1}{2} w_1|_{L^2}^2 \lesssim \varepsilon^2 |w_1|_{\varepsilon}^2,
\end{eqnarray*}
for $w_1 \in \chi(\Diff) L^2(\R^2)$, so that
\[
\left| \left|1- \frac{n(\Diff)}{\tilde n(\Diff)}\right|^{\frac{1}{2}} \diff^j u_1[\tilde{u}_1]({\bf v}) \right|_{L^2} 
\lesssim
\varepsilon |\tilde{u}_1|_\varepsilon^{1-j} |v_1|_\varepsilon \cdots |v_j|_\varepsilon, \qquad j=0,1.
\]
It follows that
 \[
 K_6(\tilde u_1) := \int_{\R^2} \left( u_1^2 - \tilde u_1^2 \right) \dx \dy = \int_{\R^2} 
 \left(\left|1- \frac{n(\Diff)}{\tilde n(\Diff)}\right|^{\frac{1}{2}}u_1\right)^2 \dx\dy
 \] 
satisfies
 \begin{eqnarray*}
\afl\left| \diff^k K_6[\tilde u_1]({\bf v}) \right| &\leq &  \sum_{j=0}^k \left| \left\langle \left |1- \frac{n(D)}{\tilde n(D)}\right|^{\frac{1}{2}} \, \diff^j u_1[\tilde{u}_1]({\bf v}), \left|1-  \frac{n(\Diff)}{\tilde n(\Diff)}\right|^{\frac{1}{2}} \, \diff^{k-j} u_1[\tilde{u}_1]({\bf v}) \right\rangle_{L^2} \right|\nonumber \\ 
\afl &\lesssim &\varepsilon^2  |\tilde u_1|_\varepsilon^{2-k} |v_1|_\varepsilon \cdots |v_k|_\varepsilon, \qquad k=0,1,2.
\end{eqnarray*}
The term
\[
K_7(\tilde u_1) := \int_{\R^2} \left( u_1^3 - \tilde u_1^3 \right) \dx \dy =  \sum_{m=0}^2 \int_{\R^2} ( u_1 - \tilde u_1)  u_1^m \tilde u_1^{2-m} \dx \dy
\]
is treated in a similar fashion. Using the estimate $|v_1|_{\mathrm{BC}} \lesssim \varepsilon |v_1|_\varepsilon$
(see Lemma~\ref{lemma:epsilon}), we find that
\begin{eqnarray*}
\afl\lefteqn{\sum_{j + l = k}\left|  \left\langle  \left( 1 - \left( \frac{n(\Diff)}{\tilde n(\Diff)} \right)^{\frac{1}{2}} \right) \diff^j u_1[\tilde{u}_1]({\bf v}) , \diff^l \tilde u_1 [\tilde{u}_1]({\bf v}) \,  \diff^{k-l-j} \tilde u_1 [\tilde{u}_1]({\bf v}) \right\rangle  \right|_{L^2}}\nonumber \\
\afl \quad &\lesssim & \sum_{j + l = k} |\diff^l \tilde u_1 [\tilde{u}_1]({\bf v})|_{\mathrm{BC}}  \left|  \left( 1 - \left( \frac{n(\Diff)}{\tilde n(\Diff)} \right)^{\frac{1}{2}} \right) \diff^j u_1[\tilde{u}_1]({\bf v}) \right|_{L^2} \left| \diff^{k-j-l} \tilde u_1 [\tilde{u}_1]({\bf v})   \right|_{L^2}\nonumber \\
 \afl&\lesssim &  \sum_{j + l = k} \varepsilon |\tilde u_1|_\varepsilon^{1-l}  \varepsilon |\tilde u_1|_\varepsilon^{1-j} |\tilde u_1|_\varepsilon^{1-(k-j-l)} |v_1|_{\varepsilon} \cdots |v_k|_\varepsilon, \qquad k=0,1,2.
\end{eqnarray*}
with similar estimates for the summands with $m=0$ and $m=2$ in the formula for $K_7(\tilde{u})$.
Altogether we find that
\[|\diff^k K_7[\tilde u_1]({\bf v})| \lesssim \varepsilon^2 | \tilde u_1|_\varepsilon^{3-k} |v_1|_\varepsilon \cdots |v_k|_\varepsilon, \qquad k=0,1,2.\hspace{1.8cm}\Box\]

\begin{remark}
Using the simple expansion $n(k)=\tilde{n}(k) +\bigO(|(k_1,\tfrac{k_2}{k_1})|^4)$ for $k \in C$ 
leads to the insufficient estimate
\[\afl\int_{\R^2} \left((n(D)-\tilde{n}(D))^\frac{1}{2}u_1\right)^2 \dx\dy=
\int_{\R^2} |n(k)-\tilde{n}(k)||\hat{u}_1|^2 \dk = \bigO(\varepsilon^2 |u_1|_\varepsilon^2)\]
(at the next step we use the KP scaling for $u$ and scale the functional by
$\varepsilon^{-3}$).
\end{remark}

Finally, we use the KP-scaling
\[
 \tilde{u}_1(x,y) = \varepsilon^2 \zeta(\varepsilon x,\varepsilon^2 y).
\]
{ The following proposition is immediate.}

\begin{proposition} \label{prop:I2}
The mapping $\tilde{u}_1 \mapsto \zeta$ defines an isomorphism $I_2: \chi(D)L^2(\R) \to \chi_\varepsilon(D)L^2(\R)$.
\end{proposition}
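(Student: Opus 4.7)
The proposition is essentially a change-of-variables statement, and my plan is to reduce it to elementary scaling computations in Fourier space; the author even prefaces it with ``the following proposition is immediate.'' The map $I_2$ is given explicitly by
\[
I_2 \tilde u_1(X,Y) = \varepsilon^{-2}\tilde u_1(\varepsilon^{-1}X,\varepsilon^{-2}Y),
\]
with obvious inverse $\zeta \mapsto \varepsilon^2 \zeta(\varepsilon x,\varepsilon^2 y)$. Both are linear, so the only real content is to check that (a) $I_2$ maps the source space bijectively onto the target space (i.e., the Fourier support matches), and (b) both directions are bounded.

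For (a), a direct computation of the Fourier transform under the anisotropic dilation $(x,y) \mapsto (\varepsilon x,\varepsilon^2 y)$ gives
\[
\widehat{I_2 \tilde u_1}(k_1,k_2) = \varepsilon\,\hat{\tilde u}_1(\varepsilon k_1,\varepsilon^2 k_2),
\]
so that $\supp \widehat{I_2\tilde u_1}\subseteq \{k:(\varepsilon k_1,\varepsilon^2 k_2)\in C\}$, which is precisely the support condition encoded by the multiplier $\chi_\varepsilon(D)=\chi(\varepsilon D_1,\varepsilon^2 D_2)$. The same identity run backwards shows that $I_2^{-1}$ sends $\chi_\varepsilon(D)L^2$ into $\chi(D)L^2$, giving the bijectivity.

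For (b), a one-line change of variables yields
\[
|I_2 \tilde u_1|_{L^2}^2 = \varepsilon^{-1}|\tilde u_1|_{L^2}^2,
\]
which is consistent with (indeed the source of) the relation $|u|_\varepsilon^2=\varepsilon|\zeta|_{\tilde Y}^2$ already recorded in Section~2. Thus $I_2$ and $I_2^{-1}$ are bounded linear maps between the two Hilbert spaces (with $\varepsilon$-dependent norms), completing the argument.

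There is no real obstacle: the only thing one has to be careful about is keeping track of the two scaling factors $\varepsilon$ and $\varepsilon^2$ in the anisotropic dilation and the resulting Jacobian $\varepsilon^{3}$ in the change of variables. Since all norms involved are $L^2$-type and the Fourier support transforms exactly under the dilation, the claimed isomorphism property follows immediately.
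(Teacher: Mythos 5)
Your proposal is correct and matches what the paper intends: the paper offers no proof at all, dismissing the statement as ``immediate,'' and your explicit verification via the anisotropic dilation in Fourier space (support transforming under $\chi_\varepsilon(k)=\chi(\varepsilon k_1,\varepsilon^2 k_2)$, and the $L^2$ scaling $|I_2\tilde u_1|_{L^2}^2=\varepsilon^{-1}|\tilde u_1|_{L^2}^2$) is exactly the computation being suppressed. Nothing is missing.
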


Note that $\tilde{u}_1 \in \chi(D)L^2(\R^2)$ has $\supp \hat{\tilde{u}}_1 \in C$, while
$\chi_\varepsilon(D) L^2(\R^2)$ has $\supp \hat{\zeta} \in C_\varepsilon$, where
\[C_\varepsilon=\left\{(k_1,k_2): |k| \leq \tfrac{\delta}{\varepsilon}, \left|\tfrac{k_2}{k_1}\right|\leq \tfrac{\delta}{\varepsilon}\right\}.\]
The formula
${\mathcal T}_\varepsilon(\zeta):=\varepsilon^{-3}\widetilde{\mathcal J}_\varepsilon(\tilde{u}_1(\zeta))$
therefore defines a smooth functional $B_M(0) \to \R$, where $B_M(0)=\{\zeta \in \tilde{Y}_\varepsilon: |\zeta|<M\}$
and $M>1$ is chosen so that $B_M(0) \subseteq I_2[\tilde{U}]$, that is
$M \lesssim \varepsilon^{-\frac{1}{2}}\tilde{\tau}$. (Recall that 
$\tilde Y_\varepsilon = \chi_\varepsilon(D)\tilde{Y}$ consists of those functions in $\tilde{Y}$
whose Fourier transforms are supported in $C_\varepsilon$; for $\varepsilon = 1$ it coincides with $\tilde Y_1$, and in the limit $\varepsilon \to 0$ it `fills out' all of $\tilde Y$.)

Using Lemma \ref{lemma:cov to KP} and the calculations
\[
|\varepsilon \tilde u_1|_{L^2}^2 + | \tilde n(\Diff)^{\frac{1}{2}} \tilde u_1|_{L^2}^2 = \varepsilon^{3} |\zeta|_{\tilde Y}^2,
\qquad
| \tilde u_1 |_\varepsilon = \varepsilon^{\frac{1}{2}} |\zeta|_{\tilde Y},
\]
one finds that
\begin{equation}
{\mathcal T}_\varepsilon(\zeta)={\mathcal Q}(\zeta) { +} {\mathcal S}(\zeta)+\varepsilon^{\frac{1}{2}}{\mathcal R}_\varepsilon(\zeta),
\label{eq:final red func}
\end{equation}
where
\[
{\mathcal Q}(\zeta)=\frac{1}{2}|\zeta|_{\tilde Y}^2, \qquad 
{\mathcal S}(\zeta)=\frac{1}{3}\int_{\R^2} \zeta^3 \dx \dy,\]
and
\[
|\diff^k {\mathcal R}_\varepsilon[\zeta]|_{\tilde{Y}^k \to \R} \lesssim |\zeta|_{\tilde Y}^{2-k}, \qquad k = 0,1,2;\]
in particular, we note that
\[
|{\mathcal R}_\varepsilon(\zeta)| + |\diff {\mathcal R}_\varepsilon[\zeta](\zeta)|
+ |\diff^2 {\mathcal R}_\varepsilon[\zeta](\zeta,\zeta)| \lesssim | \zeta|_{\tilde Y}^2.
\]

Let $\zeta \in B_M(0)$ and define $u=u_1(\tilde{u}_1(\zeta)) + u_2(u_1(\tilde{u}_1(\zeta)))$.
By construction one has that
\begin{equation}
\diff {\mathcal I}_\varepsilon[u](w_1)= \varepsilon^3 \diff  {\mathcal T}_\varepsilon[\zeta] (\varrho), \qquad
\diff {\mathcal I}_\varepsilon[u](w_2)=0 \label{eq:I vs T}
\end{equation}
for each $w=w_1+w_2 = X_1 \oplus X_2$, where $\rho=I_2(I_1(w_1))$ (see Propositions \ref{prop:I1} and
\ref{prop:I2}),
so that in particular each critical point $\zeta_\infty$ of ${\mathcal T}_\varepsilon$ defines a critical point
$u_\infty=u_1(\tilde{u}_1(\zeta_\infty)) + u_2(u_1(\tilde{u}_1(\zeta_\infty)))$
of ${\mathcal I}_\varepsilon$. Equation \eref{eq:I vs T} also shows that each
{ Palais--Smale} sequence $\{\zeta_n\}$ for ${\mathcal T}_\varepsilon$ generates a { Palais--Smale}
sequence $\{u_n\}$ with $u_n=u_1(\tilde{u}_1(\zeta_n)) + u_2(u_1(\tilde{u}_1(\zeta_n)))$ for ${\mathcal I}_\varepsilon$,
and our next result confirms that weakly convergent sequences in $B_M(0) \subseteq \tilde{Y}_\varepsilon$ generate
sequences which are weakly convergent in $X$.

\begin{proposition} \label{prop:weak trace back}
Suppose that $\{\zeta_n\} \subset B_M(0)$ converges weakly in $\tilde{Y}_\varepsilon$ to
$\zeta_\infty \in B_M(0)$. The corresponding sequence $\{u_n\}$, where
\[
u_n = u_1(\tilde{u}_1(\zeta_n)) + u_2(u_1(\tilde{u}_1(\zeta_n))),
\]
converges weakly in
$X$  to $u_\infty = u_1(\tilde{u}_1(\zeta_\infty)) + u_2(u_1(\tilde{u}_1(\zeta_\infty)))$.
\end{proposition}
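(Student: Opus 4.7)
The plan is to trace weak convergence through the chain of maps $\zeta_n \mapsto \tilde{u}_1(\zeta_n) \mapsto u_1(\tilde{u}_1(\zeta_n)) \mapsto u_2(u_1(\tilde{u}_1(\zeta_n)))$, where the first two steps are bounded linear isomorphisms and the third is the nonlinear solution of the fixed-point equation $u_2 = G(u_1, u_2)$.

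First I would handle the two linear steps. The inverse KP-rescaling $I_2^{-1}: \zeta \mapsto \tilde{u}_1$, defined by $\tilde{u}_1(x,y) = \varepsilon^2 \zeta(\varepsilon x, \varepsilon^2 y)$, is a bounded linear isomorphism from $\tilde{Y}_\varepsilon$ onto $\chi(\Diff)L^2(\R^2) = X_1$, using the norm relation $|\tilde{u}_1|_\varepsilon = \varepsilon^{\frac{1}{2}}|\zeta|_{\tilde{Y}}$ and the equivalence of norms on $X_1$. It therefore preserves weak convergence, giving $\tilde{u}_1(\zeta_n) \rightharpoonup \tilde{u}_1(\zeta_\infty)$ in $X_1$. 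By Proposition~\ref{prop:I1}, $I_1^{-1} = (\tilde{n}/n)^{\frac{1}{2}}(\Diff)$ is a bounded linear isomorphism $X_1 \to X_1$, so also $u_1^n := u_1(\tilde{u}_1(\zeta_n)) \rightharpoonup u_1^\infty := u_1(\tilde{u}_1(\zeta_\infty))$ in $X_1$. The constraint $M \lesssim \varepsilon^{-\frac{1}{2}}\tilde{\tau}$ guarantees that each $u_1^n$ lies in $U$, so $u_2(u_1^n)$ is well defined.

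The main step is establishing the weak continuity of the nonlinear map $U \ni u_1 \mapsto u_2(u_1) \in X_2$. By Lemma~\ref{lemma:u_2} the sequence $\{u_2^n\} := \{u_2(u_1^n)\}$ satisfies $|u_2^n|_X \lesssim \varepsilon|u_1^n|_\varepsilon^2$, so it is bounded in $X_2$. By reflexivity of $X_2$, some subsequence $\{u_2^{n_k}\}$ converges weakly to some $v \in X_2$. Because each $u_2^{n_k}$ satisfies the fixed-point equation $u_2^{n_k} = G(u_1^{n_k}, u_2^{n_k})$ and $G: X_1 \times X_2 \to X_2$ is weakly continuous by Proposition~\ref{prop:G weak}, passing to the weak limit yields $v = G(u_1^\infty, v)$. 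The uniqueness assertion in Lemma~\ref{lemma:u_2} then forces $v = u_2(u_1^\infty)$. Since every weakly convergent subsequence has the same limit, the full sequence $\{u_2^n\}$ converges weakly to $u_2(u_1^\infty)$ in $X_2$.

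Finally, the orthogonal decomposition $X = X_1 \oplus X_2$ means that weak convergence in $X$ is equivalent to componentwise weak convergence in $X_1$ and $X_2$, so $u_n = u_1^n + u_2^n \rightharpoonup u_1^\infty + u_2(u_1^\infty) = u_\infty$ in $X$. The crux of the argument is the fixed-point/weak-continuity step, which depends essentially on Proposition~\ref{prop:G weak}: without weak continuity of the quadratic nonlinearity $(u_1+u_2)^2$ into $Z$ (itself a consequence of the compact embedding $X \hookrightarrow L^4_{\mathrm{loc}}$), one could not identify the weak limit of the sequence of fixed points as the fixed point associated with the weak limit of the inputs.
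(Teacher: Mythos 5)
Your argument is correct and follows essentially the same route as the paper's proof: pass weak convergence through the linear isomorphisms to get $u_{1,n}\rightharpoonup u_{1,\infty}$ in $X_1$, use boundedness of $\{u_2(u_{1,n})\}$ plus the weak continuity of $G$ (Proposition~\ref{prop:G weak}) to identify every weak subsequential limit with the unique fixed point $u_2(u_{1,\infty})$, and sum the components. The paper's proof is just a terser version of the same argument (and shares your mild glossing-over of exactly where the uniqueness of the fixed point is invoked).
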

\proof
Abbreviating $u_1(\tilde{u}_1(\zeta_n))$ to $u_{1,n}$, note that
$\{u_{1,n}\} \subset U$ converges weakly in $X_1$ to
$u_{1,\infty}=u_1(\tilde{u}_1(\zeta_\infty)) \in U$.
Furthermore, $u_{2,n}=u_2(u_{1,n})$ is the
unique solution in $X_2$ of equation \eref{eq:eqn with G} with $u_1=u_{1,n}$, so that
\[
u_{2,n}=G(u_{1,n},u_{2,n}).\]

Observe that $\{u_{2,n}\}$ is bounded in $X_2$; the following argument shows that any 
weakly convergent subsequence of $\{u_{2,n}\}$ has weak limit $u_2(u_{1,\infty})$,
so that $\{u_{2,n}\}$ itself converges weakly to $u_2(u_{1,\infty})$ in $X_2$.
Suppose that (a subsequence of) $\{u_{2,n}\}$ converges weakly in $X_2$ to $u_{2,\infty}$.
Because
$G: X_1 \times X_2 \to X_2$ is weakly continuous (Proposition \ref{prop:G weak}), we find that
\[
u_{2,\infty}=G(u_{1,\infty},u_{2,\infty}),
\]
so that $u_{2,\infty} = u_2(u_{1,\infty})$ (the fixed-point equation
$u_2 = G(u_{1,\infty}, u_2)$ has a unique solution in $X_2$).

Altogether we conclude that $\{u_{1,n}+u_{2,n}\}$ converges weakly in $X$ to
$u_\infty=u_{1,\infty} + u_{2,\infty}$.\qed

\section{Existence theory} \label{sec:existence}

The functional ${\mathcal T}_\varepsilon: B_M(0) \to \R$ may be considered as a perturbation
of the `limiting' functional ${\mathcal T}_0: \tilde{Y} \to \R$ with
\[{\mathcal T}_0(\zeta)={\mathcal Q}(\zeta) { +} {\mathcal S}(\zeta).\]
More precisely
$\varepsilon^\frac{1}{2}{\mathcal R}_\varepsilon \circ\chi_\varepsilon(D)$ (which coincides with
${\varepsilon^\frac{1}{2}\mathcal R}_\varepsilon$ on $B_M(0) \subset \tilde{Y}_\varepsilon$) converges uniformly
to zero over $B_M(0) \subset \tilde{Y}$, and corresponding statements for its derivatives also hold.
In this section we study ${\mathcal T}_\varepsilon$ by perturbative arguments in this spirit,
choosing $M>1$ sufficiently large that inequality \eref{eq:choose M} below holds for
some $\zeta_0\in \tilde{Y}\setminus\{0\}$.

We seek critical points of ${\mathcal T}_\varepsilon$ by considering its \emph{natural constraint set}
\[
N_\varepsilon = \left\{ \zeta \in B_M(0) \colon \zeta \neq 0, \diff {\mathcal T}_\varepsilon[\zeta](\zeta) = 0 \right\},
\]
noting the calculation
\begin{equation}\label{eq:dT}
\diff {\mathcal T}_\varepsilon[\zeta](\zeta)  = 2 {\mathcal Q}(\zeta) { +} 3 {\mathcal S}(\zeta) + \varepsilon^{\frac{1}{2}} \diff {\mathcal R}_\varepsilon[\zeta](\zeta),
\end{equation}
which shows that
\begin{eqnarray*}
{ -}{\mathcal S}(\zeta) & = & \tfrac{2}{3}{\mathcal Q}(\zeta)
+ \tfrac{1}{3}\varepsilon^{\frac{1}{2}} \diff {\mathcal R}_\varepsilon[\zeta](\zeta) \nonumber \\
& = & \tfrac{1}{3}|\zeta|_{\tilde Y}^2 + \bigO(\varepsilon^\frac{1}{2}|\zeta|_{\tilde Y}^2) \nonumber \\
& \geq & \tfrac{1}{6}|\zeta|_{\tilde Y}^2
\end{eqnarray*}
and
\begin{eqnarray*}
\diff^2 {\mathcal T}_\varepsilon[\zeta](\zeta, \zeta) &=& 2 {\mathcal Q}(\zeta) { +} 6 {\mathcal S}(\zeta) + \varepsilon^{\frac{1}{2}} \diff^2 {\mathcal R}_\varepsilon[\zeta](\zeta, \zeta) ^{\frac{1}{2}}\nonumber \\
&=& - 2 {\mathcal Q}(\zeta) - 2\varepsilon^{\frac{1}{2}} \diff {\mathcal R}_\varepsilon[\zeta](\zeta) + \varepsilon^{\frac{1}{2}} \diff^2 {\mathcal R}_\varepsilon[\zeta](\zeta, \zeta) \nonumber \\
&= &- |\zeta|_{\tilde Y}^2 +  \bigO (\varepsilon^{\frac{1}{2}}|\zeta|_{\tilde Y}^2) \nonumber \\
&\leq& -\tfrac{1}{2} |\zeta|_{\tilde Y}^2
\end{eqnarray*}
(and in particular ${\mathcal S}(\zeta) { <} 0$, $\diff^2 {\mathcal T}_\varepsilon[\zeta](\zeta, \zeta)<0$)
for points $\zeta \in N_\varepsilon$. 
Any nontrivial critical point of ${\mathcal T}_\varepsilon$ clearly lies
on $N_\varepsilon$, and the following proposition shows that the converse is also true.

\begin{proposition}
Any critical point of ${\mathcal T}_\varepsilon|_{N_\varepsilon}$
is a (necessarily nontrivial) critical point of ${\mathcal T}_\varepsilon$.
\end{proposition}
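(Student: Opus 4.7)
The plan is the standard Lagrange-multiplier argument, exploiting the nondegeneracy estimate on $\mathrm{d}^2 {\mathcal T}_\varepsilon[\zeta](\zeta,\zeta)$ already derived in the excerpt. Define
\[
G(\zeta) = \mathrm{d} {\mathcal T}_\varepsilon[\zeta](\zeta),
\]
so that $N_\varepsilon = G^{-1}(0) \cap (B_M(0)\setminus\{0\})$. Since ${\mathcal T}_\varepsilon$ is smooth on the open set $B_M(0)\subset \tilde{Y}_\varepsilon$, so is $G$, and a direct computation gives
\[
\mathrm{d} G[\zeta](v) = \mathrm{d}^2 {\mathcal T}_\varepsilon[\zeta](\zeta,v) + \mathrm{d} {\mathcal T}_\varepsilon[\zeta](v).
\]

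First I would verify that $N_\varepsilon$ is a smooth codimension-one submanifold of $B_M(0)\setminus\{0\}$: for $\zeta \in N_\varepsilon$ one has $\mathrm{d} {\mathcal T}_\varepsilon[\zeta](\zeta) = 0$, whence
\[
\mathrm{d} G[\zeta](\zeta) = \mathrm{d}^2 {\mathcal T}_\varepsilon[\zeta](\zeta,\zeta) \leq -\tfrac{1}{2}|\zeta|_{\tilde Y}^2 < 0
\]
by the estimate established just before the proposition. In particular $\mathrm{d} G[\zeta] \neq 0$, so the implicit function theorem shows that $N_\varepsilon$ is a smooth submanifold and $T_\zeta N_\varepsilon = \ker \mathrm{d} G[\zeta]$.

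Suppose now that $\zeta \in N_\varepsilon$ is a critical point of ${\mathcal T}_\varepsilon|_{N_\varepsilon}$. By the Lagrange multiplier rule there exists $\lambda \in \R$ such that
\[
\mathrm{d} {\mathcal T}_\varepsilon[\zeta] = \lambda\,\mathrm{d} G[\zeta]
\]
as elements of $\tilde{Y}_\varepsilon^{\,*}$. Applying both sides to $\zeta$, the left-hand side vanishes because $\zeta \in N_\varepsilon$, while the right-hand side equals $\lambda\,\mathrm{d}^2 {\mathcal T}_\varepsilon[\zeta](\zeta,\zeta)$, which is nonzero by the estimate above. Hence $\lambda = 0$, and therefore $\mathrm{d} {\mathcal T}_\varepsilon[\zeta] = 0$ in $\tilde{Y}_\varepsilon^{\,*}$, i.e.\ $\zeta$ is a critical point of ${\mathcal T}_\varepsilon$; the fact that it is nontrivial follows from $\zeta \neq 0$ being already built into the definition of $N_\varepsilon$.

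There is no real obstacle here; the whole argument rests on the inequality $\mathrm{d}^2{\mathcal T}_\varepsilon[\zeta](\zeta,\zeta)<0$ on $N_\varepsilon$, which simultaneously shows that $N_\varepsilon$ is a manifold and that the Lagrange multiplier must vanish. The one point to be careful about is that $B_M(0)$ is an open ball, so $N_\varepsilon$ has no boundary to worry about, and the Lagrange multiplier rule applies verbatim in the Banach-space setting since $\mathrm{d} G[\zeta]$ is a nonzero continuous linear functional on $\tilde{Y}_\varepsilon$.
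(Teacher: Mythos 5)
Your argument is correct and is essentially identical to the paper's own proof: both define the constraint functional ${\mathcal G}_\varepsilon(\zeta)=\diff{\mathcal T}_\varepsilon[\zeta](\zeta)$, use $\diff{\mathcal G}_\varepsilon[\zeta](\zeta)=\diff^2{\mathcal T}_\varepsilon[\zeta](\zeta,\zeta)<0$ on $N_\varepsilon$ to justify the Lagrange multiplier rule, and evaluate the multiplier identity at $\zeta$ itself to force the multiplier to vanish. Your additional remarks on the submanifold structure and the openness of $B_M(0)$ are fine but not needed beyond what the paper records.
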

\proof
Define ${\mathcal G}_\varepsilon: U_\varepsilon\setminus \{0\} \to \R$ by ${\mathcal G}_\varepsilon(\zeta)=\diff {\mathcal T}_\varepsilon[\zeta](\zeta)$, so that $N_\varepsilon = {\mathcal G}_\varepsilon^{-1}(0)$ and $\diff {\mathcal G}_\varepsilon[\zeta]$
does not vanish on $N_\varepsilon$ (since $\diff {\mathcal G}_\varepsilon[\zeta](\zeta)=\diff ^2{\mathcal T}_\varepsilon[\zeta](\zeta,\zeta)<0$ for $\zeta \in N_\varepsilon$). There exists a Lagrange multiplier $\mu$ such that
\[\diff {\mathcal T}_\varepsilon[\zeta^\star]-\mu\diff {\mathcal G}_\varepsilon[\zeta^\star]=0,\]
and applying this operator to $\zeta^\star$ we find that $\mu = 0$, whence $\diff {\mathcal T}_\varepsilon[\zeta^\star]  = 0$.\qed

There is a convenient geometrical interpretation of $N_\varepsilon$ (see Figure \ref{fig:ncs geometry}).

\begin{proposition} \label{prop:nc interpretation}
Any ray in $(B _M(0)\setminus \{0\}) \cap {\mathcal S}^{-1}{(-\infty,0)} \subset \tilde{Y}_\varepsilon$ intersects
$N_\varepsilon$ in at most one point and the value of ${\mathcal T}_\varepsilon$ along such a ray attains a strict maximum at this point.
\end{proposition}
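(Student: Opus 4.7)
The plan is to reduce the statement to a one-variable analysis of $g(t) := \mathcal{T}_\varepsilon(t\zeta)$. Fix a nonzero $\zeta \in B_M(0)$ with $\mathcal{S}(\zeta)<0$ (so that $\mathcal{S}(t\zeta)=t^3\mathcal{S}(\zeta)<0$ for every $t>0$) and let $T=M/|\zeta|_{\tilde Y}$, so the portion of the ray lying in $B_M(0)$ is parametrised by $t\in(0,T)$. Setting $\phi(t):=\diff \mathcal{T}_\varepsilon[t\zeta](t\zeta)=tg'(t)$, the point $t\zeta$ lies on $N_\varepsilon$ exactly when $\phi(t)=0$, so the proposition amounts to showing that $\phi$ has at most one zero in $(0,T)$ and that $g$ attains a strict global maximum there.

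For uniqueness, I would use the identity $\phi'(t)=g'(t)+tg''(t)$, so that at any zero $t^\star \in (0,T)$ (where $g'(t^\star)=0$),
\[
\phi'(t^\star)=t^\star g''(t^\star)=\frac{1}{t^\star}\,\diff^2\mathcal{T}_\varepsilon[t^\star\zeta](t^\star\zeta,t^\star\zeta)\leq -\frac{1}{2t^\star}|t^\star\zeta|_{\tilde Y}^2<0
\]
by the bound on $\diff^2\mathcal{T}_\varepsilon[\eta](\eta,\eta)$ for $\eta\in N_\varepsilon$ computed in the paragraph immediately above the statement. Thus $\phi$ is strictly decreasing at each of its zeros; if two zeros $t_1<t_2$ existed, $\phi$ would be negative just to the right of $t_1$ and positive just to the left of $t_2$, so the intermediate value theorem would produce a third zero $t_3\in(t_1,t_2)$ at which necessarily $\phi'(t_3)\geq 0$, a contradiction.

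For the strict global maximum, I would treat the case $\varepsilon=0$ directly: $g_0(t)=t^2\mathcal{Q}(\zeta)+t^3\mathcal{S}(\zeta)$ has the single critical point $t_0^\star=-2\mathcal{Q}(\zeta)/(3\mathcal{S}(\zeta))>0$, which is clearly its strict global maximum on $(0,\infty)$. For $\varepsilon>0$ I would use the expansion
\[
\phi(t)=2t^2\mathcal{Q}(\zeta)+3t^3\mathcal{S}(\zeta)+\varepsilon^{1/2}\diff\mathcal{R}_\varepsilon[t\zeta](t\zeta)
\]
together with the uniform bound $|\diff\mathcal{R}_\varepsilon[t\zeta](t\zeta)|\lesssim t^2|\zeta|_{\tilde Y}^2$ on $B_M(0)$ to conclude that $\phi(t)>0$ for $t>0$ sufficiently small (once $\varepsilon$ is small enough relative to $M$). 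Combined with the uniqueness of the zero, this forces $\phi>0$ on $(0,t^\star)$ and $\phi<0$ on $(t^\star,T)$, so $g$ strictly increases and then strictly decreases across $t^\star$, which is therefore its strict global maximum on the ray. If $\phi$ has no zero on $(0,T)$, the ray misses $N_\varepsilon$ and the assertion is vacuous.

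The main delicate point is the perturbative step for $\varepsilon>0$: both the positivity of $\phi$ near $t=0$ and the strict negativity of $\diff^2\mathcal{T}_\varepsilon[\eta](\eta,\eta)$ on $N_\varepsilon$ depend on the uniform smallness of $\varepsilon^{1/2}\mathcal{R}_\varepsilon$ and its first two derivatives on $B_M(0)$, which is precisely the quantitative content of the convention $\varepsilon\leq\varepsilon_M\lesssim M^{-2}$ adopted in the excerpt.
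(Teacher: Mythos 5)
Your proposal is correct and follows essentially the same route as the paper: reduce to the one-variable function $\lambda\mapsto{\mathcal T}_\varepsilon(\lambda\zeta)$, observe that its critical points are exactly the intersections of the ray with $N_\varepsilon$, and use the estimate $\diff^2{\mathcal T}_\varepsilon[\eta](\eta,\eta)\leq-\tfrac{1}{2}|\eta|_{\tilde Y}^2$ on $N_\varepsilon$ to conclude that every such critical point is a nondegenerate strict maximum, hence unique. The extra small-$t$ positivity step and the auxiliary function $\phi(t)=t g'(t)$ are harmless refinements of the same argument.
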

\proof
Let $\zeta\in (B _M(0)\setminus \{0\}) \cap {\mathcal S}^{-1}{(-\infty,0)} \subset \tilde{Y}_\varepsilon$ and consider the value of ${\mathcal T}_\varepsilon$
along the ray in $B _M(0)\setminus \{0\}$ through $\zeta$, that is, the set
$\{\lambda\zeta:0<\lambda<M/|\zeta|_1\} \subset \tilde{Y}_\varepsilon$.
The calculation
\[
\frac{\diff }{\diff \lambda} {\mathcal T}_\varepsilon(\lambda \zeta)=\diff {\mathcal T}_\varepsilon[\lambda\zeta](\zeta)=\lambda^{-1}\diff {\mathcal T}_\varepsilon[\lambda\zeta](\lambda\zeta)
\]
shows that $\frac{\diff }{\diff \lambda} {\mathcal T}_\varepsilon(\lambda \zeta)=0$ if and only if
$\lambda \zeta \in N_\varepsilon$; furthermore
\begin{eqnarray*}
\frac{\diff ^2}{\diff \lambda^2}{\mathcal T}_\varepsilon(\lambda\zeta)
&=&
2{\mathcal Q}(\zeta)+6\lambda^{-2} {\mathcal S}(\lambda \zeta) +
\varepsilon^{\frac{1}{2}}\diff ^2{\mathcal R}_\varepsilon[\lambda\zeta](\zeta,\zeta)\nonumber \\
&=&
-2{\mathcal Q}(\zeta)
-2\lambda^{-2}\varepsilon^{\frac{1}{2}}\diff {\mathcal R}_\varepsilon[\lambda\zeta](\lambda\zeta)
+\varepsilon^{\frac{1}{2}}\diff ^2{\mathcal R}_\varepsilon[\lambda\zeta](\zeta,\zeta)
\nonumber \\
&=&
-{ 2} {\mathcal Q}(\zeta)+\bigO(\varepsilon^{\frac{1}{2}} |\zeta|_{\tilde Y}^2)\nonumber \\
&<&0
\end{eqnarray*}
for each $\zeta$ with $\lambda \zeta \in N_\varepsilon$.
\qed

 \begin{remark}
If $\varepsilon=0$ we may take $M=\infty$, and in this case 
every ray in ${\mathcal S}^{-1}{(-\infty,0)}$ intersects $N_0$ in precisely one point.
\end{remark}

In view of the above characterisation of nontrivial critical points of ${\mathcal T}_\varepsilon$ we
proceed by seeking a `ground state', that is, a minimiser $\zeta^\star$
of ${\mathcal T}_\varepsilon$ over $N_\varepsilon$. We make frequent use of the identities
\begin{eqnarray}
{\mathcal T}_\varepsilon(\zeta)
&=&\tfrac{1}{3} {\mathcal Q}(\zeta)+\tfrac{1}{3}\diff {\mathcal T}_\varepsilon[\zeta](\zeta)
+\varepsilon^{\frac{1}{2}}\left({\mathcal R}_\varepsilon(\zeta)-\tfrac{1}{3}\diff {\mathcal R}_\varepsilon[\zeta](\zeta)\right), \label{eq:only Q} \\
{\mathcal T}_\varepsilon(\zeta) &=&
{ -} \tfrac{1}{2}{\mathcal S}(\zeta)+\tfrac{1}{2}\diff {\mathcal T}_\varepsilon[\zeta](\zeta)
+\varepsilon^{\frac{1}{2}}\left({\mathcal R}_\varepsilon(\zeta)
-\tfrac{1}{2}\diff {\mathcal R}_\varepsilon[\zeta](\zeta)\right), \label{eq:only S}
\end{eqnarray}
which are obtained using \eref{eq:dT} to eliminate respectively ${\mathcal S}(\zeta)$ and ${\mathcal Q}(\zeta)$
from \eref{eq:final red func}, beginning with some
\emph{a priori} bounds for ${\mathcal T}_\varepsilon|_{N_\varepsilon}$.

\begin{proposition}\label{prop:lower bounds}
Each $\zeta \in N_\varepsilon$ satisfies
${\mathcal T}_\varepsilon(\zeta) \geq \tfrac{1}{12} |\zeta|_{\tilde Y}^2$ and $|\zeta|_{\tilde Y} \gtrsim 1$.
{ In particular}, each
$\zeta\in N_\varepsilon$ with
${\mathcal T}_\varepsilon(\zeta)<\frac{1}{12}(M-1)^2$ satisfies
$|\zeta|_{\tilde Y}<M-1$.
\end{proposition}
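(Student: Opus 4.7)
The plan is to read everything off the two identities \eref{eq:only Q}, \eref{eq:only S} together with the remainder estimates already collected just below \eref{eq:final red func}.

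\textbf{First inequality.} On $N_\varepsilon$ we have $\diff {\mathcal T}_\varepsilon[\zeta](\zeta)=0$, so identity \eref{eq:only Q} reduces to
\[
{\mathcal T}_\varepsilon(\zeta) = \tfrac{1}{3}{\mathcal Q}(\zeta) + \varepsilon^{\frac{1}{2}}\bigl({\mathcal R}_\varepsilon(\zeta) - \tfrac{1}{3}\diff{\mathcal R}_\varepsilon[\zeta](\zeta)\bigr) = \tfrac{1}{6}|\zeta|_{\tilde Y}^2 + \bigO(\varepsilon^{\frac{1}{2}}|\zeta|_{\tilde Y}^2),
\]
where the last step uses ${\mathcal Q}(\zeta)=\tfrac12|\zeta|_{\tilde Y}^2$ and the bound $|{\mathcal R}_\varepsilon(\zeta)| + |\diff{\mathcal R}_\varepsilon[\zeta](\zeta)| \lesssim |\zeta|_{\tilde Y}^2$. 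Taking $\varepsilon_M$ small enough (uniformly in $\zeta\in B_M(0)$) that the remainder is bounded by $\tfrac1{12}|\zeta|_{\tilde Y}^2$ gives ${\mathcal T}_\varepsilon(\zeta) \geq \tfrac1{12}|\zeta|_{\tilde Y}^2$.

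\textbf{Lower bound on $|\zeta|_{\tilde Y}$.} The computation displayed just before Proposition~\ref{prop:nc interpretation} (or, equivalently, \eref{eq:dT} rearranged) shows that every $\zeta\in N_\varepsilon$ satisfies
\[
-{\mathcal S}(\zeta) = \tfrac{2}{3}{\mathcal Q}(\zeta) + \tfrac{1}{3}\varepsilon^{\frac{1}{2}}\diff{\mathcal R}_\varepsilon[\zeta](\zeta) \geq \tfrac{1}{6}|\zeta|_{\tilde Y}^2,
\]
again provided $\varepsilon_M$ is sufficiently small. On the other hand, by Proposition~\ref{prop:anisotropic embeddings}(i) we have the continuous embedding $\tilde Y \hookrightarrow L^3(\R^2)$, and therefore
\[
|{\mathcal S}(\zeta)| = \tfrac{1}{3}\Bigl|\int_{\R^2}\zeta^3\dx\dy\Bigr| \leq \tfrac{1}{3}|\zeta|_{L^3}^3 \lesssim |\zeta|_{\tilde Y}^3.
\]
Combining the two inequalities yields $|\zeta|_{\tilde Y}^2 \lesssim |\zeta|_{\tilde Y}^3$, and since $\zeta\neq 0$ on $N_\varepsilon$ we can divide to conclude $|\zeta|_{\tilde Y}\gtrsim 1$, with an implicit constant independent of $\varepsilon\in[0,\varepsilon_M]$.

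\textbf{The ``in particular'' clause.} If $\zeta\in N_\varepsilon$ and ${\mathcal T}_\varepsilon(\zeta) < \tfrac{1}{12}(M-1)^2$, the first inequality gives $\tfrac{1}{12}|\zeta|_{\tilde Y}^2 \leq {\mathcal T}_\varepsilon(\zeta) < \tfrac{1}{12}(M-1)^2$, so $|\zeta|_{\tilde Y} < M-1$.

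The only mildly subtle point is the uniformity of the implicit constants as $\varepsilon\to 0$; this is exactly the regime in which the estimates for ${\mathcal R}_\varepsilon$ and the embedding $\tilde Y\hookrightarrow L^3(\R^2)$ were set up, so no additional difficulty arises. I expect no real obstacle beyond bookkeeping of the $\varepsilon^{1/2}$ remainder terms.
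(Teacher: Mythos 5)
Your proof is correct and follows essentially the same route as the paper: the first bound comes from \eref{eq:only Q} with $\diff{\mathcal T}_\varepsilon[\zeta](\zeta)=0$, and the lower bound $|\zeta|_{\tilde Y}\gtrsim 1$ comes from combining $-{\mathcal S}(\zeta)\gtrsim|\zeta|_{\tilde Y}^2$ (via \eref{eq:dT}) with the embedding $\tilde Y\hookrightarrow L^3(\R^2)$. No issues.
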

\proof
Let $\zeta \in N_\varepsilon$. Using \eref{eq:only Q}, one finds that
\[
{\mathcal T}_\varepsilon(\zeta) = \tfrac{1}{3} {\mathcal Q}(\zeta) +  \bigO(\varepsilon^{\frac{1}{2}}|\zeta|_{\tilde Y}^2)
 = \tfrac{1}{6}|\zeta|_{\tilde Y}^2 +  \bigO(\varepsilon^{\frac{1}{2}}|\zeta|_{\tilde Y}^2)
 \geq \tfrac{1}{12}|\zeta|_{\tilde Y}^2,
\]
so that in particular ${\mathcal T}_\varepsilon(\zeta)<\frac{1}{12}(M-1)^2$ implies that
$|\zeta|_{\tilde Y} < M-1$. Furthermore
\[
|\zeta|_{\tilde Y}^2 = 2 {\mathcal Q}(\zeta) = { -}3 {\mathcal S}(\zeta) +  \bigO(\varepsilon^{\frac{1}{2}}|\zeta|_{\tilde Y}^2) \lesssim |\zeta|_{\tilde Y}^3 + \varepsilon^{\frac{1}{2}} |\zeta|_{\tilde Y}^2,
\]
{ where we have used \eref{eq:dT} and the embedding $\tilde Y \hookrightarrow L^3(\R^2)$;
it follows that $|\zeta|_{\tilde Y} \gtrsim 1$.}
\qed

\begin{remark} \label{rem:inf is positive}
Let $c_\varepsilon := \inf_{N_\varepsilon} {\mathcal T}_\varepsilon$.
It follows from Proposition \ref{prop:lower bounds} that
$\liminf_{\varepsilon \to 0} c_\varepsilon { \gtrsim 1}$ and from
equation \eref{eq:only S} that
$-S(\zeta) \gtrsim c_\varepsilon -O(\varepsilon^{\frac{1}{2}})$ for all $\zeta \in N_\varepsilon$.
\end{remark}

The next result shows how points on $N_0$ may be approximated by points on $N_\varepsilon$.

\begin{proposition} \label{prop:approximate N0}
Suppose that ${\mathcal S}(\zeta_0) { <0}$ and $\lambda_0 \zeta_0 \in B_{M-1}(0)$
is the unique point on the ray through $\zeta_0 \in \tilde{Y}\setminus\{0\}$
which lies on $N_0$.
There exists $\xi_\varepsilon \in N_\varepsilon$ such that
$\lim_{\varepsilon \to 0} |\xi_\varepsilon -\lambda_0\zeta_0|_{\tilde{Y}} = 0$.
\end{proposition}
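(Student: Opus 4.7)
The plan is to seek $\xi_\varepsilon$ on the ray through a suitable Fourier cut-off of $\zeta_0$ and to locate the exact scaling by a one-dimensional intermediate-value argument, exploiting the non-degeneracy of $\lambda_0$ as a zero of the relevant scalar equation at $\varepsilon=0$.

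I would first set $\zeta_{0,\varepsilon}:=\chi_\varepsilon(\Diff)\zeta_0 \in \tilde{Y}_\varepsilon$; since $\chi_\varepsilon\to 1$ pointwise and is uniformly bounded, dominated convergence applied in the $\tilde{Y}$-integral gives $\zeta_{0,\varepsilon}\to\zeta_0$ strongly in $\tilde{Y}$. Combined with the embedding $\tilde{Y}\hookrightarrow L^3(\R^2)$ from Proposition \ref{prop:anisotropic embeddings}, this yields ${\mathcal Q}(\zeta_{0,\varepsilon})\to{\mathcal Q}(\zeta_0)$ and ${\mathcal S}(\zeta_{0,\varepsilon})\to{\mathcal S}(\zeta_0)<0$.

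Next I would introduce the scalar function $h_\varepsilon(\lambda):=\diff{\mathcal T}_\varepsilon[\lambda\zeta_{0,\varepsilon}](\lambda\zeta_{0,\varepsilon})$, whose zeros with $\lambda\neq 0$ correspond exactly to points $\lambda\zeta_{0,\varepsilon}\in N_\varepsilon$ (subject to the size constraint $|\lambda\zeta_{0,\varepsilon}|_{\tilde{Y}}<M$). Using the structure of ${\mathcal T}_\varepsilon$ in \eref{eq:final red func},
\[ h_\varepsilon(\lambda) = 2\lambda^2 {\mathcal Q}(\zeta_{0,\varepsilon}) + 3\lambda^3 {\mathcal S}(\zeta_{0,\varepsilon}) + \varepsilon^{\frac{1}{2}} \diff {\mathcal R}_\varepsilon[\lambda\zeta_{0,\varepsilon}](\lambda\zeta_{0,\varepsilon}), \]
and the uniform remainder bound $|\diff{\mathcal R}_\varepsilon[\zeta](\zeta)|\lesssim|\zeta|_{\tilde{Y}}^2$ on $B_M(0)$, together with the $\tilde{Y}$-convergence of $\zeta_{0,\varepsilon}$, ensures $h_\varepsilon\to h_0$ uniformly on any compact $\lambda$-interval in $(0,\infty)$. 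At $\varepsilon=0$, $h_0(\lambda_0)=0$ by the defining condition $\lambda_0\zeta_0\in N_0$; eliminating ${\mathcal Q}(\zeta_0)$ via this identity gives $h_0'(\lambda_0)=3\lambda_0^2{\mathcal S}(\zeta_0)<0$, so $h_0$ changes sign transversally at $\lambda_0$.

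By uniform convergence, $h_\varepsilon$ inherits the same sign change on a fixed small interval $[\lambda_0-\eta,\lambda_0+\eta]$ once $\varepsilon$ is small enough, and the intermediate-value theorem produces $\lambda_\varepsilon$ there with $h_\varepsilon(\lambda_\varepsilon)=0$; sending $\eta\to 0$ forces $\lambda_\varepsilon\to\lambda_0$. Setting $\xi_\varepsilon:=\lambda_\varepsilon\zeta_{0,\varepsilon}$, the triangle inequality
\[ |\xi_\varepsilon-\lambda_0\zeta_0|_{\tilde{Y}} \leq |\lambda_\varepsilon-\lambda_0|\,|\zeta_{0,\varepsilon}|_{\tilde{Y}}+\lambda_0|\zeta_{0,\varepsilon}-\zeta_0|_{\tilde{Y}} \]
gives the required convergence, and since $|\lambda_0\zeta_0|_{\tilde{Y}}<M-1$ one has $\xi_\varepsilon\in B_M(0)$ for small $\varepsilon$, hence $\xi_\varepsilon\in N_\varepsilon$. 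The main obstacle is more bookkeeping than analytic: the ambient space $\tilde{Y}_\varepsilon$ depends on $\varepsilon$, which forces the projection $\zeta_0\mapsto\zeta_{0,\varepsilon}$, and one must use the uniform control of $\varepsilon^{\frac{1}{2}}\diff{\mathcal R}_\varepsilon$ on bounded sets of $\tilde{Y}$ to legitimately compare $h_\varepsilon$ with $h_0$.
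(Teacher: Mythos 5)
Your proposal is correct and follows essentially the same route as the paper: both truncate $\zeta_0$ to $\zeta_{0,\varepsilon}=\chi_\varepsilon(\Diff)\zeta_0\to\zeta_0$ in $\tilde Y$, exploit the transversal sign change of $\lambda\mapsto\diff{\mathcal T}_0[\lambda\zeta_0](\lambda\zeta_0)$ at $\lambda_0$ together with the uniform smallness of $\varepsilon^{\frac{1}{2}}\diff{\mathcal R}_\varepsilon$ on bounded sets, and then apply the intermediate value theorem to produce $\lambda_\varepsilon\to\lambda_0$ with $\xi_\varepsilon=\lambda_\varepsilon\zeta_{0,\varepsilon}\in N_\varepsilon$. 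The only cosmetic difference is that the paper works with $\frac{\diff}{\diff\lambda}{\mathcal T}_\varepsilon(\lambda\zeta_\varepsilon)=\lambda^{-1}h_\varepsilon(\lambda)$ and invokes Proposition \ref{prop:nc interpretation} to note the zero is unique, which your argument does not need.
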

\proof Note that
\begin{equation}
\frac{\diff }{\diff  \lambda} {\mathcal T}_0(\lambda \zeta_0) \Big|_{\lambda=\lambda_0} = 0,
\qquad
\frac{\diff ^2}{\diff  \lambda^2} {\mathcal T}_0(\lambda \zeta_0) \Big|_{\lambda=\lambda_0} < 0.
\label{eq:zero ray}
\end{equation}
Let $\zeta_\varepsilon = \chi_\varepsilon(D)\zeta_0$, so that
$\zeta_\varepsilon\in \tilde{Y}_\varepsilon \subset \tilde{Y}$ with
$\lim_{\varepsilon\to 0}|\zeta_\varepsilon-\zeta_0|_{\tilde Y}=0$, and in particular
\[|\lambda_0\zeta_\varepsilon|_{\tilde Y} < M-1.\]
According to \eref{eq:zero ray} we can find $\tilde{\gamma}>1$ such that $\tilde{\gamma} |\lambda_0 \zeta_\varepsilon |_{\tilde Y} < M$
(so that $\tilde{\gamma}\lambda_0\zeta_\varepsilon \in U_\varepsilon$) and
\[
\frac{\diff }{\diff  \lambda} {\mathcal T}_0(\lambda \zeta_0) \Big|_{\lambda=\tilde{\gamma}^{-1}\lambda_0}>0,
\qquad
\frac{\diff }{\diff  \lambda} {\mathcal T}_0(\lambda \zeta_0) \Big|_{\lambda=\tilde{\gamma}\lambda_0}<0,
\]
and therefore
\[
\frac{\diff }{\diff  \lambda} {\mathcal T}_\varepsilon(\lambda \zeta_\varepsilon) \Big|_{\lambda=\tilde{\gamma}^{-1}\lambda_0}>0,
\qquad
\frac{\diff }{\diff  \lambda} {\mathcal T}_\varepsilon(\lambda \zeta_\varepsilon) \Big|_{\lambda=\tilde{\gamma}\lambda_0}<0
\]
(the quantities on the left-hand sides of the inequalities on the
second line converge to those on the first as $\varepsilon \to 0$).
It follows that there exists $\lambda_\varepsilon\in (\widetilde \gamma^{-1}\lambda_0,\widetilde \gamma \lambda_0)$
with
\[
\frac{\diff }{\diff  \lambda} {\mathcal T}_\varepsilon(\lambda \zeta_\varepsilon) \Big|_{\lambda=\lambda_\varepsilon}=0,
\]
that is, $\xi_\varepsilon:=\lambda_\varepsilon \zeta_\varepsilon \in N_\varepsilon$, and we conclude that
this value of $\lambda_\varepsilon$ is unique (see Proposition \ref{prop:nc interpretation})
and that $\lim_{\varepsilon \to 0} \lambda_\varepsilon = \lambda_0$.\qed

\begin{corollary}
Any minimising sequence $\{\zeta_n\}$ of ${\mathcal T}_\varepsilon|_{N_{\varepsilon}}$ satisfies
\[
\limsup_{n\to\infty} |\zeta_n|_{\tilde Y} <M-1.
\]
\end{corollary}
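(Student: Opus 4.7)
The plan is to reduce the claim to a strict uniform upper bound on $c_\varepsilon := \inf_{N_\varepsilon} \mathcal{T}_\varepsilon$. By Proposition \ref{prop:lower bounds}, any $\zeta \in N_\varepsilon$ satisfies $|\zeta|_{\tilde Y}^2 \leq 12 \mathcal{T}_\varepsilon(\zeta)$, so given a minimising sequence $\{\zeta_n\}$ one has
\[
\limsup_{n \to \infty} |\zeta_n|_{\tilde Y}^2 \leq 12 c_\varepsilon.
\]
Hence it suffices to establish that $c_\varepsilon < \tfrac{1}{12}(M-1)^2$ for all sufficiently small $\varepsilon > 0$; from this the strict inequality $\limsup |\zeta_n|_{\tilde Y} < M-1$ follows immediately.

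To obtain this bound on $c_\varepsilon$, I would invoke the paragraph preceding Proposition \ref{prop:lower bounds}: $M$ is chosen large enough that there exists some $\zeta_0 \in \tilde Y \setminus \{0\}$ with $\mathcal S(\zeta_0) < 0$ for which the unique $\lambda_0 > 0$ with $\lambda_0 \zeta_0 \in N_0$ satisfies both $\lambda_0 \zeta_0 \in B_{M-1}(0)$ and the strict inequality
\[
\mathcal{T}_0(\lambda_0 \zeta_0) < \tfrac{1}{12}(M-1)^2
\]
(this is precisely the content of the inequality labelled \eref{eq:choose M}). Proposition \ref{prop:approximate N0} then produces $\xi_\varepsilon \in N_\varepsilon$ with $\xi_\varepsilon \to \lambda_0 \zeta_0$ in $\tilde Y$ as $\varepsilon \to 0$.

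The next step is to pass this convergence through $\mathcal{T}_\varepsilon$. Writing $\mathcal{T}_\varepsilon(\zeta) = \mathcal{Q}(\zeta) + \mathcal{S}(\zeta) + \varepsilon^{1/2} \mathcal{R}_\varepsilon(\zeta)$ as in \eref{eq:final red func}, the continuity of $\mathcal{Q}$ and $\mathcal{S}$ on $\tilde{Y}$ (the latter via $\tilde{Y} \hookrightarrow L^3(\R^2)$ from Proposition \ref{prop:anisotropic embeddings}) together with the uniform bound $|\mathcal{R}_\varepsilon(\xi_\varepsilon)| \lesssim |\xi_\varepsilon|_{\tilde Y}^2$ imply that $\mathcal{T}_\varepsilon(\xi_\varepsilon) \to \mathcal{T}_0(\lambda_0 \zeta_0)$ as $\varepsilon \to 0$. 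Therefore, for all sufficiently small $\varepsilon$,
\[
c_\varepsilon \leq \mathcal{T}_\varepsilon(\xi_\varepsilon) < \tfrac{1}{12}(M-1)^2,
\]
which combined with the first paragraph completes the proof.

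No step presents a substantial obstacle — the argument is essentially bookkeeping — but the one point that must be handled with care is the interplay between the choice of $M$ (made at the level of the limiting functional $\mathcal{T}_0$) and the uniform smallness of the perturbation term $\varepsilon^{1/2}\mathcal{R}_\varepsilon$ on a ball of radius $M$. This is exactly the reason for the relation $\varepsilon_M \lesssim M^{-2}$ recorded in the introduction: $M$ is fixed first (ensuring \eref{eq:choose M}), and $\varepsilon$ is then taken correspondingly small so that $c_\varepsilon$ lies strictly below $\tfrac{1}{12}(M-1)^2$.
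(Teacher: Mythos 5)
Your proposal is correct and follows essentially the same route as the paper: reduce to the bound $c_\varepsilon<\tfrac{1}{12}(M-1)^2$ via Proposition \ref{prop:lower bounds}, note that the choice \eref{eq:choose M} of $M$ and the explicit value ${\mathcal T}_0(\lambda_0\zeta_0)=4{\mathcal Q}(\zeta_0)^3/(27{\mathcal S}(\zeta_0)^2)$ give ${\mathcal T}_0(\lambda_0\zeta_0)<\tfrac{1}{12}(M-1)^2$, and then transfer this to $N_\varepsilon$ by Proposition \ref{prop:approximate N0} together with ${\mathcal T}_\varepsilon(\xi_\varepsilon)\to{\mathcal T}_0(\lambda_0\zeta_0)$. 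Your extra remarks justifying this last limit and the role of $\varepsilon_M\lesssim M^{-2}$ are consistent with (indeed slightly more explicit than) the paper's argument.
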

\proof In view of Proposition \ref{prop:lower bounds}
it sufficies to show that for each sufficiently large value of $M$
(chosen independently of $\varepsilon$) there exists
$\zeta^\star \in N_\varepsilon$ such that
${\mathcal T}_\varepsilon(\zeta^\star) < \frac{1}{12}(M-1)^2$. { In fact,} choose $\zeta_0\in \tilde{Y}\setminus\{0\}$ and $M>1$ such that
\begin{equation}
{\mathcal S}(\zeta_0) { < 0}, \qquad \frac{{\mathcal Q}(\zeta_0)^3}{{\mathcal S}(\zeta_0)^2}<\tfrac{27}{48}(M-1)^2.
\label{eq:choose M}
\end{equation}
The calculation
\[
\diff {\mathcal T}_0[\lambda_0 \zeta_0](\lambda_0\zeta_0)
= 2\lambda_0^2 {\mathcal Q}(\zeta_0) { +} 3\lambda_0^3 {\mathcal S}(\zeta_0)
\]
{ then} shows that $\lambda_0\zeta_0 \in N_0$, where
\[
\lambda_0={ -}\frac{2{\mathcal Q}(\zeta_0)}{3{\mathcal S}(\zeta_0)}.
\]
It follows that $\lambda_0\zeta_0$ is the unique point on its ray which lies on $N_0$, and
\begin{equation}
{\mathcal T}_0(\lambda_0\zeta_0)=\tfrac{1}{3}{\mathcal Q}(\lambda_0\zeta_0)=\frac{4{\mathcal Q}(\zeta_0)^3}{27{\mathcal S}(\zeta_0)^2}<\tfrac{1}{12}(M-1)^2, \label{eq:value of J0}
\end{equation}
so that
\[|\lambda_0 \zeta_0|_{\tilde Y} < M-1.\]
Proposition \ref{prop:approximate N0} asserts the existence of $\xi_\varepsilon \in N_\varepsilon$ with
$\lim_{\varepsilon \to 0}|\xi_\varepsilon - \lambda_0\zeta_0|_{\tilde{Y}}=0.$
Using the limit
\[\lim_{\varepsilon\to 0}
{\mathcal T}_\varepsilon(\xi_\varepsilon)
={\mathcal T}_0(\lambda_0\zeta_0)\]
and \eref{eq:value of J0}, we find that
\[{\mathcal T}_\varepsilon(\xi_\varepsilon)<\tfrac{1}{12}(M-1)^2.
\hspace{2.8in}\Box\]

The next step is to show that there is a minimising sequence for ${\mathcal T}_\varepsilon|_{N_\varepsilon}$
which is also a { Palais--Smale} sequence.

\begin{proposition}\label{prop:minimising sequence}
There exists a minimising sequence $\{\zeta_n\} \subset  B_{M-1}(0)$
of ${\mathcal T}_\varepsilon|_{N_\varepsilon}$ such that 
\[
\lim_{n \to \infty}|\diff {\mathcal T}_\varepsilon[\zeta_n]|_{\tilde Y_\varepsilon \to \R} = 0.
\]
\end{proposition}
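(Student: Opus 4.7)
The plan is to apply Ekeland's variational principle to $\mathcal{T}_\varepsilon$ restricted to $N_\varepsilon$, and then to convert the resulting constrained-derivative estimate into the desired unconstrained estimate by exploiting the defining relation $\diff \mathcal{T}_\varepsilon[\zeta](\zeta)=0$ on $N_\varepsilon$. First I would single out a complete subset of $N_\varepsilon$ on which Ekeland is applicable. The corollary preceding Proposition~\ref{prop:minimising sequence} furnishes a minimising sequence for $\mathcal{T}_\varepsilon|_{N_\varepsilon}$ lying in $B_{M-1}(0)$; combining this with the bounds $\mathcal{T}_\varepsilon(\zeta) \geq \tfrac{1}{12}|\zeta|_{\tilde Y}^2$ and $|\zeta|_{\tilde Y} \gtrsim 1$ from Proposition~\ref{prop:lower bounds}, one sees that for every sufficiently small $\rho>0$ the sublevel set
\[
\mathcal{N}_\rho := \{\zeta \in N_\varepsilon : \mathcal{T}_\varepsilon(\zeta) \leq c_\varepsilon + \rho\}
\]
lies strictly inside $B_{M-1}(0)$, stays bounded away from the origin, and is closed in $\tilde{Y}_\varepsilon$ (via continuity of $\mathcal{T}_\varepsilon$ and of $\mathcal{G}_\varepsilon(\zeta) := \diff \mathcal{T}_\varepsilon[\zeta](\zeta)$), hence complete. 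Moreover, the calculation displayed before Proposition~\ref{prop:nc interpretation} shows that $\diff \mathcal{G}_\varepsilon[\zeta](\zeta)=\diff^2 \mathcal{T}_\varepsilon[\zeta](\zeta,\zeta) \leq -\tfrac{1}{2}|\zeta|_{\tilde Y}^2 < 0$ on $N_\varepsilon$, so that $N_\varepsilon$ is a smooth codimension-one submanifold of $B_M(0)\setminus\{0\}$.

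Ekeland's principle on $\mathcal{N}_\rho$, started from a point with $\mathcal{T}_\varepsilon$-value close to $c_\varepsilon$, then produces a minimising sequence $\{\zeta_n\} \subset \mathcal{N}_\rho \subset B_{M-1}(0)$ for which the derivative of $\mathcal{T}_\varepsilon$ along tangent directions vanishes uniformly, namely
\[
\delta_n := \sup \bigl\{ |\diff \mathcal{T}_\varepsilon[\zeta_n](v)| : v \in T_{\zeta_n} N_\varepsilon,\ |v|_{\tilde Y}=1 \bigr\} \to 0.
\]
To promote this to $|\diff \mathcal{T}_\varepsilon[\zeta_n]|_{\tilde Y_\varepsilon \to \R}\to 0$, I would use the direct-sum decomposition $\tilde Y_\varepsilon = T_{\zeta_n}N_\varepsilon \oplus \mathrm{span}(\zeta_n)$ induced by the non-vanishing functional $\diff \mathcal{G}_\varepsilon[\zeta_n]$: any $v\in \tilde{Y}_\varepsilon$ decomposes as $v = v_T + \alpha \zeta_n$ with $\alpha = \diff \mathcal{G}_\varepsilon[\zeta_n](v)/\diff \mathcal{G}_\varepsilon[\zeta_n](\zeta_n)$. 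Because $\diff \mathcal{T}_\varepsilon[\zeta_n](\zeta_n) = 0$ on $N_\varepsilon$, this gives $\diff \mathcal{T}_\varepsilon[\zeta_n](v) = \diff \mathcal{T}_\varepsilon[\zeta_n](v_T)$, so that the required bound follows from Ekeland's estimate provided that $|v_T|_{\tilde Y} \lesssim |v|_{\tilde Y}$ uniformly in $n$.

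The main technical point is therefore to control the splitting constants uniformly. A uniform lower bound $|\diff \mathcal{G}_\varepsilon[\zeta_n](\zeta_n)| \geq \tfrac{1}{2}|\zeta_n|_{\tilde Y}^2 \gtrsim 1$ is provided by Proposition~\ref{prop:lower bounds} together with the second-variation calculation recalled above, while a uniform upper bound on $|\diff \mathcal{G}_\varepsilon[\zeta_n]|_{\tilde Y_\varepsilon \to \R}$ is obtained by differentiating \eref{eq:final red func}, invoking the remainder estimates recorded there, and using $|\zeta_n|_{\tilde Y} < M-1$; these two inputs combine to give $|v_T|_{\tilde Y} \lesssim |v|_{\tilde Y}$ and hence $|\diff \mathcal{T}_\varepsilon[\zeta_n]|_{\tilde Y_\varepsilon \to \R} \to 0$. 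The only potentially delicate point is the perturbative preservation of the non-degeneracy $\diff^2 \mathcal{T}_\varepsilon[\zeta](\zeta,\zeta)<0$ on $N_\varepsilon$; this, however, is already in hand from the quadratic-minus-cubic structure of $\mathcal{T}_0$ and the remainder estimates on $\mathcal{R}_\varepsilon$, so no further work is required.
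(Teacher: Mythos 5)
Your proposal is correct and is essentially the paper's argument in unpacked form: the paper simply cites Ekeland's variational principle for regular constraints to obtain a minimising sequence with $\diff{\mathcal T}_\varepsilon[\zeta_n]-\mu_n\,\diff{\mathcal G}_\varepsilon[\zeta_n]\to 0$, and then kills the multiplier by applying this to $\zeta_n$ and using exactly the two facts you isolate, namely $\diff{\mathcal T}_\varepsilon[\zeta_n](\zeta_n)=0$ and $\diff{\mathcal G}_\varepsilon[\zeta_n](\zeta_n)=\diff^2{\mathcal T}_\varepsilon[\zeta_n](\zeta_n,\zeta_n)\lesssim -1$. Your tangent/normal splitting $v=v_T+\alpha\zeta_n$ together with the uniform bounds on $\diff{\mathcal G}_\varepsilon[\zeta_n]$ is precisely the content of that Lagrange-multiplier formulation, so the two proofs coincide in substance.
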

\proof
Ekeland's variational principle for optimisation problems with regular constraints
(Ekeland \cite[Thm 3.1]{Ekeland74}) implies the existence of
a minimising sequence $\{\zeta_n\}$ for ${\mathcal T}_\varepsilon|_{N_\varepsilon}$ and a sequence
$\{\mu_n\}$ of real numbers such that
\[
\lim_{n \to \infty} |\diff {\mathcal T}_\varepsilon[\zeta_n] - \mu_n \, \diff {\mathcal G}_\varepsilon[\zeta_n]|_{\tilde Y_\varepsilon \to \R}  = 0.
\]
Applying this sequence of operators to $\zeta_n$, we find that $\mu_n \to 0$ as $n \to \infty$
(since $\diff {\mathcal T}_\varepsilon[\zeta_n](\zeta_n)=0$ and
$\diff{\mathcal G}_\varepsilon[\zeta_n](\zeta_n)=\diff ^2{\mathcal T}_\varepsilon[\zeta_n](\zeta_n,\zeta_n)\lesssim -1$),
whence $|\diff {\mathcal T}|_\varepsilon[\zeta_n]_{\tilde Y_\varepsilon \to \R} \to 0$ as $n \to \infty$.
\qed

The following lemma examines the convergence properties of more general { Palais--Smale} sequences.

\begin{lemma}\label{lemma:critical points} \hspace{2cm}
\begin{itemize}
\item[(i)]
Suppose that $\{\zeta_n\} \subset B_{M-1}(0)$ satisfies
\[\lim_{n \to \infty} \diff {\mathcal T}_\varepsilon[\zeta_n]=0,
\qquad
\sup_{j \in \Z^2} |\zeta_n|_{L^2(Q_j)} \gtrsim 1.\]
There exists $\{w_n\} \subset \Z^2$ with the property that a subsequence of
$\{\zeta_n(\cdot+w_n)\}$
converges weakly in $\tilde{Y}_\varepsilon$ to a nontrivial critical point $\zeta_\infty$ of
${\mathcal T}_\varepsilon$.
\item[(ii)]
Suppose that $\varepsilon>0$. The corresponding sequence of
FDKP-solutions $\{u_n\}$, where
\[
u_n = u_1(\tilde{u}_1(\zeta_n)) + u_2(u_1(\tilde{u}_1(\zeta_n)))
\]
and we have abbreviated $\{\zeta_n(\cdot+w_n)\}$ to $\{\zeta_n\}$, converges weakly in
$X$  to $u_\infty = u_1(\tilde{u}_1(\zeta_\infty)) + u_2(u_1(\tilde{u}_1(\zeta_\infty)))$
(which is a nontrivial critical point of ${\mathcal I}_\varepsilon$).
\end{itemize}
\end{lemma}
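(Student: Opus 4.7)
The plan for part~(i) is to choose translations so that the shifted sequence $\tilde\zeta_n:=\zeta_n(\cdot+w_n)$ has mass concentrated near the origin, extract a weakly convergent subsequence whose limit is nontrivial, and then identify the limit as a critical point of ${\mathcal T}_\varepsilon$ by weak-continuity arguments. First I would choose $w_n\in\Z^2$ (nearly) maximising $j\mapsto|\zeta_n|_{L^2(Q_j)}$, so that by hypothesis $|\tilde\zeta_n|_{L^2(Q_0)}\gtrsim 1$. Since the $\tilde Y$ norm and the Fourier-support condition defining $\tilde{Y}_\varepsilon=\chi_\varepsilon(\Diff)\tilde Y$ are both preserved under spatial translation, $\{\tilde\zeta_n\}$ remains in $B_{M-1}(0)\subset\tilde{Y}_\varepsilon$, and a subsequence converges weakly to some $\zeta_\infty\in\tilde{Y}_\varepsilon$ (the latter being weakly closed as the range of a projection). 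The compact embedding $\tilde Y\hookrightarrow L^2_{\mathrm{loc}}(\R^2)$ from Proposition~\ref{prop:anisotropic embeddings}(ii) upgrades this to strong convergence on $Q_0$, whence $|\zeta_\infty|_{L^2(Q_0)}\gtrsim 1$ and $\zeta_\infty\neq 0$.

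The next step is to identify $\zeta_\infty$ as a critical point. Translation invariance of ${\mathcal T}_\varepsilon$ and of the $\tilde Y$-norm preserves the Palais--Smale condition: $|\diff{\mathcal T}_\varepsilon[\tilde\zeta_n]|_{\tilde Y_\varepsilon\to\R}\to 0$. For $\varepsilon=0$, where ${\mathcal T}_0(\zeta)=\tfrac12|\zeta|_{\tilde Y}^2+\tfrac13\int\zeta^3\dx\dy$, one passes to the limit directly: for each compactly supported $\psi\in\partial_x\Schwartz(\R^2)$ (a dense subset of $\tilde Y$), the quadratic part of $\diff{\mathcal T}_0[\tilde\zeta_n](\psi)$ converges by weak convergence and the cubic part $\int\tilde\zeta_n^2\psi\dx\dy$ converges by local $L^p$ compactness for $p<6$, giving $\diff{\mathcal T}_0[\zeta_\infty](\psi)=0$ on a dense set and hence everywhere. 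For $\varepsilon>0$, rather than controlling the implicit remainder ${\mathcal R}_\varepsilon$ directly, I would detour through the FDKP formulation: Proposition~\ref{prop:weak trace back} promotes the weak convergence of $\tilde\zeta_n$ to weak convergence $u_n\rightharpoonup u_\infty$ in $X$, where $u_n:=u_1(\tilde u_1(\tilde\zeta_n))+u_2(u_1(\tilde u_1(\tilde\zeta_n)))$. The identity \eref{eq:I vs T} converts $\diff{\mathcal T}_\varepsilon[\tilde\zeta_n]\to 0$ into $\diff{\mathcal I}_\varepsilon[u_n](w_1)\to 0$ for each $w_1\in X_1$, while $\diff{\mathcal I}_\varepsilon[u_n](w_2)=0$ for $w_2\in X_2$ holds by construction of $u_2(u_1)$. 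The weak continuity of the Euler--Lagrange operator $u\mapsto -cu+m(\Diff)u+u^2$ from $X$ into $Z$ (Proposition~\ref{prop:FDKP op weak}) then lets us pass to the limit to obtain $\diff{\mathcal I}_\varepsilon[u_\infty]=0$, which via \eref{eq:I vs T} in reverse yields $\diff{\mathcal T}_\varepsilon[\zeta_\infty]=0$.

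Part~(ii) is then essentially bookkeeping: weak convergence $u_n\rightharpoonup u_\infty$ in $X$ is exactly Proposition~\ref{prop:weak trace back} applied to $\{\tilde\zeta_n\}$; nontriviality of $u_\infty$ follows from $\zeta_\infty\neq 0$ together with the injectivity of $I_1$, $I_2$ and of $u_1\mapsto u_1+u_2(u_1)$; and criticality of $u_\infty$ for ${\mathcal I}_\varepsilon$ has just been established. The main obstacle I anticipate is precisely this passage to the limit in the reduced functional when $\varepsilon>0$: the remainder ${\mathcal R}_\varepsilon$ in \eref{eq:final red func} is only defined implicitly via the nonlinear fixed-point problem for $u_2(u_1)$, so a direct weak-continuity argument for $\diff{\mathcal R}_\varepsilon$ would be awkward. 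The FDKP detour sidesteps this by offloading the weak-continuity burden onto the explicit Euler--Lagrange operator of ${\mathcal I}_\varepsilon$, for which Proposition~\ref{prop:FDKP op weak} is available, and the identity \eref{eq:I vs T} then transports the conclusion back to ${\mathcal T}_\varepsilon$.
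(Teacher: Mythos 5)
Your proposal is correct and follows essentially the same route as the paper: translate so that the $L^2(Q_0)$-mass is bounded below, extract a weak limit which is nontrivial by the compact embedding into $L^2_{\mathrm{loc}}$, pass to the limit directly in $\diff{\mathcal T}_0$ for $\varepsilon=0$, and for $\varepsilon>0$ route the argument through ${\mathcal I}_\varepsilon$ via Proposition~\ref{prop:weak trace back}, the identity \eref{eq:I vs T} and the weak continuity of the Euler--Lagrange operator (Proposition~\ref{prop:FDKP op weak}). The "obstacle" you identify — the implicitly defined remainder ${\mathcal R}_\varepsilon$ — is precisely why the paper also takes the FDKP detour, so no changes are needed.
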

\proof
We can select $\{w_n\} \subset \Z^2$ so that
\[
\liminf_{n \to \infty} |\zeta_n(\cdot+w_n)|_{L^2({Q_0})} { \gtrsim 1}.
\]
The sequence $\{\zeta_n(\cdot+w_n)\} \subset B_{M-1}(0)$
admits a subsequence which converges weakly in $\tilde{Y}_\varepsilon$
and strongly in $L^2({Q_0})$ to $\zeta_\infty \in B_M(0)$;
it follows that $|\zeta_\infty|_{L^2({Q_0})} > 0$ and therefore $\zeta_\infty \neq 0$.
We henceforth abbreviate $\{\zeta_n(\cdot+w_n)\}$ to $\{\zeta_n\}$ and extract further subsequences
as necessary.

We first treat the case $\varepsilon=0$. For $w \in C_0^\infty(\R^2)$ we find that
\[
\int_{\R^2} (\zeta_n^2 - \zeta_\infty^2) w \dx \dy \leq |\zeta_n-\zeta_\infty|_{L^3(|(x,y)|<R)}^2
|w|_{L^3} \to 0
\]
as $n \to \infty$, where $R$ is chosen so that $\supp w \subset \{|(x,y)|<R\}$
 ($\{\zeta_n\}$ converges
strongly to $\zeta_\infty$ in $ L^3(|(x,y)|<R)$. This result also holds for $w \in L^3(\R^2)$ (by density) and hence for all $w \in \tilde{Y}$
(because $\tilde{Y} \subset L^3(\R^2))$. Furthermore $\langle \zeta_n, w \rangle_{\tilde Y} \to \langle \zeta_\infty,w \rangle_{\tilde Y}$
as $n \rightarrow \infty$ for all $w \in \tilde{Y}$. By taking the limit $n \rightarrow \infty$ in the equation
\[
\diff {\mathcal T}_0[\zeta_n](w)=\langle \zeta_n, w \rangle_{\tilde Y} + \int_{\R^2} \zeta_n^2 w \dx \dy,
\]
one therefore finds that
\[\langle \zeta_\infty, w \rangle_{\tilde Y} + \int_{|(x,y)| < R} \zeta_\infty^2 w \dx \dy=0,\]
that is, $\diff {\mathcal T}_0[\zeta_\infty](w)=0$ for all $w \in \tilde{Y}$.
It follows that $\diff {\mathcal T}_0[\zeta_\infty]=0$.

Now suppose that $\varepsilon>0$. According to Proposition \ref{prop:weak trace back}
the sequence $\{u_n\}$ converges weakly in $X$ to $u_\infty$, and the remarks below
equation \eref{eq:I vs T} show that
\[
\lim_{n \to \infty}|\diff {\mathcal I}_\varepsilon[u_n]|_{X \to \R} =0.
\]
Since $u \mapsto \varepsilon u + n(D)u + u^2$ is weakly continuous
$X \mapsto L^2(\R^2)$ (see Proposition \ref{prop:FDKP op weak} and
Lemma \ref{lemma:global embeddings}(i)), one finds that
\begin{eqnarray*}
\diff {\mathcal I}_\varepsilon[u_\infty](w) &= & \int_{\R^2} \left( \varepsilon^2 u_\infty + n(\Diff) u_\infty + u_\infty^2 \right) w \dx \dy\nonumber \\
&=&  \lim_{n \to \infty} \int_{\R^2} \left( \varepsilon^2 u_n + n(\Diff) u_n + u_n^2 \right) w \dx \dy \nonumber \\
& = & \lim_{n \to \infty}\diff {\mathcal I}_\varepsilon[u_n](w) \nonumber \\
& = & 0
\end{eqnarray*}
for any $w \in \partial_x { \Schwartz(\R^2)}$, whence $u_\infty$ is a critical point of ${\mathcal I}_\varepsilon$
(so that $\zeta_\infty$ is a critical point of ${\mathcal T}_\varepsilon$).\qed

It remains to show that the minimising sequence
for ${\mathcal T}_\varepsilon$ over $N_\varepsilon$ identified in Proposition
\ref{prop:minimising sequence} satisfies the `nonvanishing' criterion in Lemma
\ref{lemma:critical points}. This task is accomplished in Proposition \ref{prop:17/6}
and Corollary \ref{cor:no vanishing} below.

\begin{proposition} \label{prop:17/6}
The inequality
\[\int_{\R^2} |\zeta| |\xi|^2 \dx\dy \lesssim  \sup_{j \in \Z^2} |\zeta|_{L^2(Q_j)}^\frac{1}{6}
 |\zeta|_{\tilde{Y}}^{\frac{5}{6}} |\xi|_{\tilde Y}^2
 \]
%\[ |{\mathcal S}(\zeta)| \lesssim  \sup_{j \in \Z^2} |\zeta|_{L^2(Q_j)}^\frac{1}{6}
% |\zeta|_{\tilde{Y}}^{\frac{17}{6}}\]
{ holds for all $\zeta, \xi \in \tilde{Y}$.}
 \end{proposition}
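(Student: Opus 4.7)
The plan is to decompose the integral over the unit-cube partition $\{Q_j\}_{j \in \Z^2}$ of $\R^2$, apply Hölder inside each cube, and then interpolate the $L^p$ norm of $\zeta$ between its local $L^2$ norm (which produces the $\sup_j$ factor) and its global $L^6$ norm (controlled by the $\tilde Y$--norm via Proposition~\ref{prop:anisotropic embeddings}).

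More concretely, I would choose the Hölder exponents so that the exponent $\tfrac{1}{6}$ on the local $L^2$ norm of $\zeta$ is forced by the interpolation. With $p = 9/2$ and $p'=9/7$, so that $\frac{1}{p}+\frac{2}{2p'}=1$ and $2p' = 18/7 \in [2,6]$, Hölder's inequality gives
\[
\int_{Q_j} |\zeta||\xi|^2 \dx\dy \leq |\zeta|_{L^{9/2}(Q_j)}\, |\xi|_{L^{18/7}(Q_j)}^2.
\]
The interpolation identity $\tfrac{1}{9/2}=\tfrac{1}{6}\cdot \tfrac{1}{2}+\tfrac{5}{6}\cdot \tfrac{1}{6}$ then yields
\[
|\zeta|_{L^{9/2}(Q_j)} \leq |\zeta|_{L^2(Q_j)}^{1/6}\, |\zeta|_{L^6(Q_j)}^{5/6}.
\]

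Summing over $j$ and pulling the two sup-bounded factors outside, I would estimate
\[
\int_{\R^2}|\zeta||\xi|^2\dx\dy \leq \Bigl(\sup_{j\in\Z^2}|\zeta|_{L^2(Q_j)}\Bigr)^{\!1/6}\Bigl(\sup_{j\in\Z^2}|\zeta|_{L^6(Q_j)}\Bigr)^{\!5/6}\sum_{j\in\Z^2}|\xi|_{L^{18/7}(Q_j)}^2.
\]
For the remaining sum I would invoke the local embedding $\tilde Y(Q_j)\hookrightarrow L^{18/7}(Q_j)$ from Proposition~\ref{prop:anisotropic embeddings} and the identity $|\xi|_{\tilde Y}^2=\sum_j|\xi|_{\tilde Y(Q_j)}^2$, giving $\sum_j |\xi|_{L^{18/7}(Q_j)}^2 \lesssim |\xi|_{\tilde Y}^2$; for the $L^6$-sup factor, just $\sup_j|\zeta|_{L^6(Q_j)}\leq|\zeta|_{L^6(\R^2)}\lesssim|\zeta|_{\tilde Y}$ by the global embedding.

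There is no real obstacle here---the only tactical choice is picking the Hölder pair $(9/2,\,18/7)$ so that the interpolation parameter lands on $\tfrac{1}{6}$ and $2p'$ still lies in the admissible range $[2,6]$ for the $\tilde Y(Q_j)$ embedding; once that is set, the argument amounts to bookkeeping. Combining the three displayed estimates yields the claimed inequality.
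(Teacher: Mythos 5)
Your argument is correct and is essentially the paper's own proof: both rest on the unit-cube decomposition $\{Q_j\}$, H\"older's inequality, interpolation of a local $L^p$ norm of $\zeta$ between $L^2(Q_j)$ and $L^6(Q_j)$, and the local and global embeddings of Proposition~\ref{prop:anisotropic embeddings} together with $|\xi|_{\tilde Y}^2=\sum_j|\xi|_{\tilde Y(Q_j)}^2$. The only difference is bookkeeping: the paper applies H\"older globally with exponents $(3,3,3)$ and then interpolates $|\zeta|_{L^3(Q_j)}$ with parameter $\tfrac{1}{2}$ (so that $\tfrac{1}{6}=\tfrac{1}{2}\cdot\tfrac{1}{3}$ after taking the cube root), whereas you localise first and choose the pair $\left(\tfrac{9}{2},\tfrac{18}{7}\right)$ so that the interpolation parameter is $\tfrac{1}{6}$ directly --- the exponents $\tfrac{1}{6}$ and $\tfrac{5}{6}$ come out the same either way.
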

 
\proof This result follows from the calculation
\begin{eqnarray*}
\int_{\R^2} |\zeta| |\xi|^2 \dx\dy
 & \lesssim &  |\zeta |_{L^3} |\zeta|_{L^3}^2 \\
& \lesssim & 
  \left( \sum_{j \in \Z^2}  |\zeta|_{L^3(Q_j)}^3\right)^{\!\!\!\frac{1}{3}} 
 |\xi|_{\tilde{Y}}^2 \\
 & \lesssim & \left(\sup_{j \in \Z^2} |\zeta|_{L^3(Q_j)}
 \sum_{j \in \Z^2}  |\zeta|_{L^3(Q_j)}^2\right)^{\!\!\!\frac{1}{3}} 
 |\xi|_{\tilde{Y}}^2 \\ 
  & \lesssim & \left(\sup_{j \in \Z^2} |\zeta|_{L^2(Q_j)}^\frac{1}{2}|\zeta|_{\tilde{Y}(Q_j)}^\frac{1}{2}
 \sum_{j \in \Z^2}  |\zeta|_{\tilde{Y}(Q_j)}^2\right)^{\!\!\!\frac{1}{3}} 
 |\xi|_{\tilde{Y}}^2 \\ 
%  & \lesssim & \sup_{j \in \Z^2} |\zeta|_{L^2(Q_j)}^\frac{1}{6}
  %\left(\sum_{j \in \Z^2} |\zeta|_{\tilde{Y}(Q_j)}^2\right)^{\!\!\!\frac{5}{12}}
 %|\xi|_{\tilde{Y}}^2 \\
 & \lesssim & \sup_{j \in \Z^2} |\zeta|_{L^2(Q_j)}^\frac{1}{6}
 |\zeta|_{\tilde{Y}}^{\frac{5}{6}} |\xi|_{\tilde{Y}}^2,
\end{eqnarray*}
where we have interpolated between $L^2(Q_j)$ and $L^6(Q_j)$
and used the embeddings $L^3({\mathbb R}^2) \hookrightarrow \tilde{Y}$,
$L^6(Q_j) \hookrightarrow \tilde{Y}(Q_j)$ and
 $\ell^\infty({\mathbb Z}^2, \tilde{Y}(Q_j))
\hookrightarrow \ell^2({\mathbb Z}^2, \tilde{Y}(Q_j))=\tilde{Y}$.\qed

\begin{corollary} \label{cor:no vanishing}
Any sequence $\{\zeta_n\} \subset  N_\varepsilon$
satisfies
\[
\sup_{j \in \Z^2}  |\zeta_n|_{L^2(Q_j)} \gtrsim 1.
\]
\end{corollary}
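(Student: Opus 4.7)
The plan is to combine the natural-constraint identity satisfied on $N_\varepsilon$ with the anisotropic trilinear estimate in Proposition~\ref{prop:17/6}. First, for $\zeta_n \in N_\varepsilon$ the relation $\diff {\mathcal T}_\varepsilon[\zeta_n](\zeta_n) = 0$, combined with \eref{eq:dT} and the remainder bound $|\diff {\mathcal R}_\varepsilon[\zeta_n](\zeta_n)| \lesssim |\zeta_n|_{\tilde Y}^2$, gives
\[
-{\mathcal S}(\zeta_n) = \tfrac{2}{3}{\mathcal Q}(\zeta_n) + \tfrac{1}{3}\varepsilon^{1/2}\diff {\mathcal R}_\varepsilon[\zeta_n](\zeta_n) \geq \tfrac{1}{6}|\zeta_n|_{\tilde Y}^2
\]
for sufficiently small $\varepsilon$ (this is exactly the calculation performed in the proof of Proposition~\ref{prop:lower bounds}). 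In particular,
\[
|\zeta_n|_{\tilde Y}^2 \lesssim |{\mathcal S}(\zeta_n)| \leq \tfrac{1}{3}\int_{\R^2} |\zeta_n|^3 \dx\dy.
\]

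Next, I would apply Proposition~\ref{prop:17/6} with $\xi = \zeta = \zeta_n$ to the right-hand side to obtain
\[
|\zeta_n|_{\tilde Y}^2 \lesssim \sup_{j \in \Z^2} |\zeta_n|_{L^2(Q_j)}^{1/6}\, |\zeta_n|_{\tilde Y}^{17/6}.
\]
Since $N_\varepsilon \subset B_M(0)$, the norms $|\zeta_n|_{\tilde Y}$ are bounded above by the fixed constant $M$. Dividing the previous inequality by $|\zeta_n|_{\tilde Y}^{17/6}$ and using this uniform upper bound yields
\[
\sup_{j \in \Z^2} |\zeta_n|_{L^2(Q_j)} \gtrsim |\zeta_n|_{\tilde Y}^{-5} \gtrsim M^{-5} \gtrsim 1,
\]
as required. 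The argument is a short chain of algebraic manipulations once Proposition~\ref{prop:17/6} is available, so no genuine obstacle arises; the substantive content sits in the trilinear estimate itself, which interpolates between the local $L^2$- and $L^6$-norms via the embedding $\tilde{Y}(Q_j) \hookrightarrow L^6(Q_j)$ and the $\ell^2$-summability of the local $\tilde{Y}$-norms.
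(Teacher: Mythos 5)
Your proof is correct and follows essentially the same route as the paper: both arguments bound $|\mathcal{S}(\zeta_n)|$ from above by $\sup_{j \in \Z^2}|\zeta_n|_{L^2(Q_j)}^{1/6}|\zeta_n|_{\tilde Y}^{17/6}$ via Proposition~\ref{prop:17/6} and from below by a quantity uniformly bounded away from zero on $N_\varepsilon$. The only (harmless) difference is how that lower bound is sourced: the paper invokes $-\mathcal{S}(\zeta_n)\geq 2c_\varepsilon-\bigO(\varepsilon^{1/2})$ together with $\liminf_{\varepsilon\to 0}c_\varepsilon\gtrsim 1$ (Remark~\ref{rem:inf is positive}), whereas you use the constraint identity $-\mathcal{S}(\zeta_n)\geq\tfrac{1}{6}|\zeta_n|_{\tilde Y}^2$ and then divide by $|\zeta_n|_{\tilde Y}^{17/6}$, using the upper bound $|\zeta_n|_{\tilde Y}<M$ --- which works equally well and bypasses $c_\varepsilon$ altogether.
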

\proof
Using Proposition \ref{prop:17/6}, one finds that
\[
{ |{\mathcal S}(\zeta_n)|} \leq \int_{\R^2} |\zeta_n||\zeta_n|^2\dx\dy \lesssim
\sup_{j \in \Z^2} |\zeta|_{L^2(Q_j)}^\frac{1}{6} |\zeta|_{\tilde{Y}}^{\frac{17}{6}} \lesssim
\sup_{j \in \Z^2} |\zeta|_{L^2(Q_j)}^\frac{1}{6}
\]
(because $|\zeta_n|_{\tilde{Y}} < M$),
 and the result follows from this estimate and the fact that
${-}S(\zeta_n) \geq c_\varepsilon-\bigO(\varepsilon^{\frac{1}{2}})$ with $\liminf_{\varepsilon \to 0} c_\varepsilon { \gtrsim 1}$ { (see Remark~\ref{rem:inf is positive}).}\qed

\begin{theorem}\label{thm:first existence theorem} \hspace{2cm}
\begin{itemize}
\item[(i)]
Let $\{\zeta_n\} \subset B_{M-1}(0)$ be a minimising sequence for ${\mathcal T}_\varepsilon|_{N_\varepsilon}$
with
\[
\lim_{n \to \infty} |\diff {\mathcal T}_\varepsilon[\zeta_n]|_{\tilde Y_\varepsilon \to \R} =0.
\] 
There exists $\{w_n\} \subset \Z^2$ such that a subsequence of $\{\zeta_n(\cdot+w_n)\}$ converges weakly
in $\tilde{Y}_\varepsilon$ to a nontrivial critical point $\zeta_\infty$ of ${\mathcal T}_\varepsilon$.
\item[(ii)]
Suppose that $\varepsilon>0$. The corresponding sequence of
FDKP-solutions $\{u_n\}$, where
\[
u_n = u_1(\tilde{u}_1(\zeta_n)) + u_2(u_1(\tilde{u}_1(\zeta_n)))
\]
and we have abbreviated $\{\zeta_n(\cdot+w_n)\}$ to $\{\zeta_n\}$, converges weakly in
$X$  to $u_\infty = u_1(\tilde{u}_1(\zeta_\infty)) + u_2(u_1(\tilde{u}_1(\zeta_\infty)))$
(which is a nontrivial critical point of ${\mathcal I}_\varepsilon$).
\end{itemize}
\end{theorem}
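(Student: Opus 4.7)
The plan is to assemble the ingredients already prepared in the preceding results: at this stage the theorem is essentially a corollary of Corollary~\ref{cor:no vanishing} and Lemma~\ref{lemma:critical points}, so I would present it as such rather than doing new work.

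For part~(i), I would begin by observing that the hypotheses put $\{\zeta_n\}$ in the setting of Lemma~\ref{lemma:critical points}(i): the sequence lies in $B_{M-1}(0)$ and satisfies $|\mathrm{d}\mathcal{T}_\varepsilon[\zeta_n]|_{\tilde Y_\varepsilon \to \R} \to 0$ by assumption, so the only missing input is the nonvanishing condition. Since $\{\zeta_n\} \subset N_\varepsilon$, however, Corollary~\ref{cor:no vanishing} immediately supplies
\[
\sup_{j \in \Z^2} |\zeta_n|_{L^2(Q_j)} \gtrsim 1.
\]
(Recall that this in turn rested on Proposition~\ref{prop:17/6} together with the lower bound ${-}\mathcal{S}(\zeta_n) \gtrsim c_\varepsilon - \bigO(\varepsilon^{\frac{1}{2}})$ from Remark~\ref{rem:inf is positive}.) With all hypotheses in hand, Lemma~\ref{lemma:critical points}(i) then yields translations $\{w_n\} \subset \Z^2$ such that a subsequence of $\{\zeta_n(\cdot+w_n)\}$ converges weakly in $\tilde{Y}_\varepsilon$ to a nontrivial critical point $\zeta_\infty$ of $\mathcal{T}_\varepsilon$, which is the content of (i).

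For part~(ii) I would simply invoke Lemma~\ref{lemma:critical points}(ii) applied to the translated sequence produced in (i). The mechanism is the one already set up in Section~\ref{sec:reduction}: the relation \eref{eq:I vs T} between $\mathrm{d}\mathcal{I}_\varepsilon$ and $\mathrm{d}\mathcal{T}_\varepsilon$ (via the isomorphisms $I_1$, $I_2$ of Propositions~\ref{prop:I1} and~\ref{prop:I2}) transfers the Palais--Smale property to $\{u_n\}$, while Proposition~\ref{prop:weak trace back} transfers the weak convergence of $\{\zeta_n(\cdot+w_n)\}$ in $\tilde{Y}_\varepsilon$ to weak convergence of $\{u_n\}$ in $X$; the weak limit is $u_\infty = u_1(\tilde{u}_1(\zeta_\infty)) + u_2(u_1(\tilde{u}_1(\zeta_\infty)))$, which is a critical point of $\mathcal{I}_\varepsilon$ by the argument already given in the proof of Lemma~\ref{lemma:critical points}(ii). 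The nontriviality of $u_\infty$ follows from that of $\zeta_\infty$ through the fact that $I_1$ and $I_2$ are isomorphisms and that $u = u_1 + u_2(u_1)$ with $u_1 \neq 0$ cannot vanish.

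Since both parts reduce directly to previously established results, there is no genuine obstacle remaining; the hardest step has already been overcome in proving Corollary~\ref{cor:no vanishing} (where the anisotropic Sobolev embedding of Proposition~\ref{prop:17/6} was essential) and in the concentration/weak-continuity argument inside Lemma~\ref{lemma:critical points}. Accordingly, the proof I would write would amount to two short sentences naming the references applied for (i) and (ii), respectively.
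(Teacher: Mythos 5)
Your proposal is correct and matches the paper exactly: Theorem~\ref{thm:first existence theorem} is stated there without a separate proof precisely because it is the immediate combination of Corollary~\ref{cor:no vanishing} (supplying the nonvanishing hypothesis for any sequence in $N_\varepsilon$) with Lemma~\ref{lemma:critical points}, parts (i) and (ii). Your additional remarks on nontriviality of $u_\infty$ via the orthogonal decomposition $X=X_1\oplus X_2$ are harmless and consistent with the paper's setup.
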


\section{Ground states} \label{sec:ground states}

In this section we improve the result of Theorem \ref{thm:first existence theorem} by showing that we
can choose the sequence $\{w_n\}$ to ensure convergence to a ground state. 
For this purpose we use the following abstract concentration-compactness theorem,
which is a straightforward modification of theory
given by Buffoni, Groves \& Wahl\'{e}n \cite[Appendix A]{BuffoniGrovesWahlen18}.

\begin{theorem} \label{thm:cc}
Let $H_0$, $H_1$ be Hilbert spaces and $H_1$ be continuously embedded in $H_0$.
Consider a sequence $\{x_n\}$ in $\ell^2(\Z^s,H_1)$, where $s \in \N$.
Writing $x_n=(x_{n,j})_{j\in \Z^s}$, where  $x_{n,j}\in H_1$, suppose that
\begin{itemize}
\item[(i)] $\{x_n\}$ is bounded in $\ell^2(\Z^s,H_1)$,
\item[(ii)] $S=\{x_{n,j}:n \in \N, j \in \Z^s\}$ is relatively compact in $H_0$,
\item[(iii)] $\limsup_{n\to \infty}|x_n|_{\ell^\infty(\Z^s,H_0)} { \gtrsim 1}$.
\end{itemize}

For each $\Delta>0$ the sequence $\{x_n\}$ admits a subsequence
with the following properties.
{ There exist a finite number $m$ of non-zero vectors $x^1,\ldots,x^m\in \ell^2(\Z^s,H_1)$ and
sequences $\{w^1_n\}$, \ldots, $\{w^m_n\}
 \subset \Z^s$ satisfying
\[
\lim_{n \to \infty} |w_n^{m^{\prime\prime}}-w_n^{m^\prime}| \to \infty, \qquad 1 \leq m^{\prime\prime} < m^\prime \leq m
\]
such that
\begin{eqnarray*}
& &  T_{-w^{m^\prime}_n}x_n\rightharpoonup x^{m^\prime}, \\
& &
 |x^{m^\prime}|_{l^\infty(\Z^s,H_0)}=
\lim_{n\to \infty}
\left|x_n-\sum_{\ell=1}^{m^\prime-1}T_{w^\ell_n}x^\ell\right|_{l^\infty(\Z^s,H_0)}, \\
& &
 \lim_{n\to \infty}|x_n|_{\ell^2(\Z^s,H_1)}^2=
\sum_{\ell=1}^{m^\prime} |x^\ell|_{\ell^2(\Z^s,H_1)}^2+
\lim_{n\to \infty}
\left|x_n-\sum_{\ell=1}^{m^\prime}T_{w^\ell_n}x^\ell\right|_{\ell^2(\Z^s,H_1)}^2
\end{eqnarray*}
for $m^\prime = 1, \ldots, m$,
\[
\limsup_{n\to \infty}\left\|
x_n-\sum_{\ell=1}^mT_{w^\ell_n}x^\ell
\right\|_{\ell^\infty(\Z^s,H_0)}\leq \Delta,
\]
and
\[
\lim_{n\to \infty}\left\|x_n-T_{w^1_n}x^1
\right\|_{\ell^\infty(\Z^s,H_0)}=0 
\]
if $m=1$. 
Here the weak convergence is understood in $\ell^2(\Z^s,H_1)$ and
$T_w$ denotes the translation operator $T_w (x_{n,j})=(x_{n,j-w})$.}
\end{theorem}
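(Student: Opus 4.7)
\medskip

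\noindent\textbf{Proof plan.} The strategy is a classical Lions-style iterative extraction of profiles, adapted to the discrete setting $\ell^2(\Z^s,H_1)$. The two ingredients doing the real work are the compactness hypothesis (ii), which converts weak convergence in $H_1$ into strong convergence in $H_0$ componentwise, and a Pythagoras identity in $\ell^2(\Z^s,H_1)$ forced by the divergence $|w_n^{m''}-w_n^{m'}|\to\infty$.

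\emph{Extraction of the first profile.} By (iii) and by passing to a subsequence I choose $w_n^1\in\Z^s$ with $|x_{n,w_n^1}|_{H_0}\gtrsim 1$. Hypothesis (ii) and a further subsequence give strong convergence $x_{n,w_n^1}\to a^1$ in $H_0$ with $a^1\neq 0$. Since $\{T_{-w_n^1}x_n\}$ is bounded in $\ell^2(\Z^s,H_1)$ by (i), another subsequence yields weak convergence $T_{-w_n^1}x_n\rightharpoonup x^1$ in $\ell^2(\Z^s,H_1)$. The $0$-component of $x^1$ is the $H_1$-weak limit of $x_{n,w_n^1}$, which by uniqueness of limits and the embedding $H_1\hookrightarrow H_0$ coincides with $a^1$, so $x^1\neq 0$.

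\emph{Iteration.} Set $y_n^1:=x_n-T_{w_n^1}x^1$ and inspect $\limsup_n|y_n^1|_{\ell^\infty(\Z^s,H_0)}$. If it is $\leq\Delta$ I stop with $m=1$; otherwise I repeat the previous step on $\{y_n^1\}$ to obtain $w_n^2$ and $x^2$. The key point is that $|w_n^2-w_n^1|\to\infty$: if along a subsequence $w_n^2-w_n^1\equiv w$, then $T_{-w_n^1}y_n^1$ has $H_0$-norm bounded below at $j=w$, contradicting the fact that, since $T_{-w_n^1}x_n\rightharpoonup x^1$ in $\ell^2(\Z^s,H_1)$, componentwise compactness (from (ii) and $H_1\hookrightarrow H_0$) forces $(T_{-w_n^1}y_n^1)_j\to 0$ in $H_0$ for every fixed $j$. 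Continuing, I define at each stage $y_n^{m'}:=x_n-\sum_{\ell=1}^{m'} T_{w_n^\ell}x^\ell$ and extract $w_n^{m'+1}$, $x^{m'+1}$ in the same manner, the divergence $|w_n^{m''}-w_n^{m'}|\to\infty$ being proved by the same argument applied to the running residual.

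\emph{Pythagoras identity and termination.} For any $m'$ I expand
\[
|y_n^{m'}|_{\ell^2(\Z^s,H_1)}^2 = |x_n|_{\ell^2(\Z^s,H_1)}^2 - 2\sum_{\ell=1}^{m'}\langle T_{-w_n^\ell}x_n,x^\ell\rangle_{\ell^2(\Z^s,H_1)}
+\sum_{\ell,\ell'=1}^{m'}\langle x^\ell,T_{w_n^{\ell'}-w_n^\ell}x^{\ell'}\rangle_{\ell^2(\Z^s,H_1)}.
\]
The mixed terms tend to $\langle x^\ell,x^\ell\rangle$ by the weak convergence of the translated sequences, while the off-diagonal cross terms ($\ell\neq\ell'$) vanish because $T_{w_n^{\ell'}-w_n^\ell}x^{\ell'}\rightharpoonup 0$ in $\ell^2(\Z^s,H_1)$ as soon as $|w_n^{\ell'}-w_n^\ell|\to\infty$ (translation to infinity of an $\ell^2$ element). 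This gives the asserted identity
\[
\lim_{n\to\infty}|x_n|_{\ell^2(\Z^s,H_1)}^2=\sum_{\ell=1}^{m'}|x^\ell|_{\ell^2(\Z^s,H_1)}^2+\lim_{n\to\infty}|y_n^{m'}|_{\ell^2(\Z^s,H_1)}^2.
\]
Because at each iteration $\liminf_n|y_n^{\ell-1}|_{\ell^\infty(\Z^s,H_0)}>\Delta$, the embedding $H_1\hookrightarrow H_0$ and strong convergence at $j=0$ of the translate produce $|x^\ell_0|_{H_0}\geq\Delta$, hence $|x^\ell|_{\ell^2(\Z^s,H_1)}^2\gtrsim\Delta^2$. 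The Pythagoras identity together with (i) therefore forces termination after $m\leq C\Delta^{-2}\sup_n|x_n|_{\ell^2(\Z^s,H_1)}^2$ steps, at which point $\limsup_n|y_n^m|_{\ell^\infty(\Z^s,H_0)}\leq\Delta$. The identity for $|x^{m'}|_{\ell^\infty(\Z^s,H_0)}$ follows from the maximising choice of $w_n^{m'}$ combined with the strong-in-$H_0$, weak-in-$H_1$ extraction at $j=0$, while the final ``$m=1$'' statement is just the stopping criterion when the very first extraction already exhausts the $\ell^\infty$ mass.

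\emph{Main obstacle.} Steps 1 and the iteration set-up are routine; the delicate point is the Pythagoras identity, where one must justify both the vanishing of the off-diagonal cross terms under the divergence $|w_n^{\ell'}-w_n^\ell|\to\infty$ and the convergence of the mixed terms to $|x^\ell|_{\ell^2(\Z^s,H_1)}^2$. These are the steps that couple the $H_0$-compactness of the components (hypothesis (ii)) with the $\ell^2(\Z^s,H_1)$ weak convergence, and they are what allows the $\ell^2$ budget argument to bound $m$ in terms of $\Delta$.
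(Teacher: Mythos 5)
The paper itself gives no proof of this theorem: it is quoted verbatim (with minor modifications) from Buffoni, Groves \& Wahl\'en, Appendix A, so the only comparison available is with the standard profile-decomposition argument, which is exactly what you reproduce. Your outline is essentially correct and complete on the main points: the extraction of a nontrivial first profile via (iii), (ii) and componentwise identification of the weak $\ell^2(\Z^s,H_1)$ limit with the strong $H_0$ limit; the divergence $|w_n^{m''}-w_n^{m'}|\to\infty$ by contradiction with the componentwise strong $H_0$ convergence of the residual; the translation-invariance/weak-convergence computation behind the Pythagoras identity; and the termination bound $m\lesssim \Delta^{-2}$ coming from $|x^\ell|_{\ell^2(\Z^s,H_1)}\gtrsim|x^\ell_0|_{H_0}\gtrsim\Delta$. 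Two points you use implicitly should be said: hypothesis (ii) must be re-verified for the residuals $y_n^{m'}$ (it holds because the set of components of a fixed element of $\ell^2(\Z^s,H_1)$ is relatively compact in $H_1$, hence in $H_0$), and the identity $|x^{m'}|_{\ell^\infty(\Z^s,H_0)}=\lim_n|y_n^{m'-1}|_{\ell^\infty(\Z^s,H_0)}$ requires the $w_n^{m'}$ to be chosen asymptotically maximising at every stage, including the first, where you only ask for $|x_{n,w_n^1}|_{H_0}\gtrsim 1$.

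The one genuine gap is your treatment of the final clause. You dismiss the assertion $\lim_n|x_n-T_{w_n^1}x^1|_{\ell^\infty(\Z^s,H_0)}=0$ for $m=1$ as ``just the stopping criterion'', but your stopping criterion only yields $\limsup_n|y_n^1|_{\ell^\infty(\Z^s,H_0)}\leq\Delta$, which is strictly weaker and is not what the theorem claims; this stronger conclusion is precisely what the paper needs in Lemma~\ref{lemma:application of cc}, where $m=1$ is deduced a posteriori and the exact vanishing of the residual is then used. To close the gap you must make the construction deliver it: set $\delta_{m'}=\limsup_n|y_n^{m'}|_{\ell^\infty(\Z^s,H_0)}$ and observe that whenever $\delta_1>0$ a second \emph{non-zero} profile can be extracted (your own first step applied to $y_n^1$ produces $x^2\neq 0$ exactly when $\delta_1>0$), so one is entitled to declare $m=1$ only when $\delta_1=0$; if $0<\delta_1\leq\Delta$ one instead takes $m\geq 2$, and the ``if $m=1$'' clause is not invoked. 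With this refinement of the stopping rule your argument proves the theorem as stated.
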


We proceed by using Theorem \ref{thm:cc} to study { Palais--Smale} sequences for
${\mathcal T}_\varepsilon$, extracting subsequences where necessary for the
validity of our arguments.

\begin{lemma} \label{lemma:application of cc}
Suppose that $\{\zeta_n\} \subset B_{M-1}(0)$ satisfies
\[\lim_{n \to \infty} \diff {\mathcal T}_\varepsilon[\zeta_n]=0,
\qquad
\sup_{j \in \Z^2} |\zeta_n|_{L^2(Q_j)} \gtrsim 1.\]
There exists $\{w_n\} \subset \Z^2$ and $\zeta_\infty$ such that $\zeta_n(\cdot+w_n)
\rightharpoonup \zeta_\infty$ in $\tilde{Y}$, ${\mathcal S}(\zeta_n) \to
{\mathcal S}(\zeta_\infty)$ as $n \to \infty$ and
\[\lim_{n \to \infty} \sup_{j \in \Z^2} |\zeta_n(\cdot+w_n)-\zeta_\infty|_{L^2(Q_j)}=0.\]
\end{lemma}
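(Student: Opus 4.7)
The plan is to apply the abstract concentration-compactness theorem (Theorem~\ref{thm:cc}) to the sequence of restrictions $x_n = (\zeta_n|_{Q_j})_{j\in\Z^2}$, viewed as an element of $\ell^2(\Z^2,\tilde{Y}(Q_0))$, with $H_1=\tilde{Y}(Q_0)$ and $H_0=L^2(Q_0)$. Hypothesis (i) follows from the isometric identification $|x_n|_{\ell^2(\Z^2,H_1)}^2=\sum_j|\zeta_n|_{\tilde{Y}(Q_j)}^2=|\zeta_n|_{\tilde{Y}}^2<(M-1)^2$; hypothesis (ii) follows from the compact embedding $\tilde{Y}(Q_j)\hookrightarrow L^2(Q_j)$, which is the localised version of Proposition~\ref{prop:anisotropic embeddings}(ii); and hypothesis (iii) is precisely the assumed nonvanishing bound.

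Applying the theorem with $\Delta_k\to 0$ and diagonalising, I extract a subsequence admitting finitely many nontrivial profiles $\zeta^1,\ldots,\zeta^m\in\tilde{Y}$ and sequences $\{w_n^1\},\ldots,\{w_n^m\}\subset\Z^2$ with $|w_n^{\ell'}-w_n^\ell|\to\infty$ for $\ell\neq\ell'$, such that $\zeta_n(\cdot+w_n^\ell)\rightharpoonup\zeta^\ell$ in $\tilde{Y}$, the $\tilde{Y}$-norms decompose orthogonally, and the $\ell^\infty(\Z^2,L^2(Q_j))$-residual after subtracting all $m$ profiles tends to zero. An argument identical to Lemma~\ref{lemma:critical points}(i), using the translation-invariance of $\mathcal{T}_\varepsilon$ together with the weak continuity of $\diff\mathcal{T}_\varepsilon$, shows that each $\zeta^\ell$ is a nontrivial critical point of $\mathcal{T}_\varepsilon$; Proposition~\ref{prop:lower bounds} then gives $|\zeta^\ell|_{\tilde{Y}}\gtrsim 1$, so orthogonality $\sum_\ell|\zeta^\ell|_{\tilde{Y}}^2\leq(M-1)^2$ bounds $m$.

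I then take $w_n:=w_n^1$ and $\zeta_\infty:=\zeta^1$, so that the weak convergence assertion is immediate. The crucial step is to rule out $m\geq 2$: each further profile $\zeta^\ell$ lies in $N_\varepsilon$, so Corollary~\ref{cor:no vanishing} gives $\sup_j|\zeta^\ell|_{L^2(Q_j)}\gtrsim 1$, and the decomposition then forces $\sup_j|\zeta_n(\cdot+w_n)-\zeta_\infty|_{L^2(Q_j)}\geq\sup_j|T_{w_n^\ell-w_n^1}\zeta^\ell|_{L^2(Q_j)}\gtrsim 1$, contradicting the desired conclusion. I expect this to be the main obstacle; the resolution should come from a maximality argument in the choice of $w_n^1$ (picking shifts that nearly attain $\sup_j|\zeta_n|_{L^2(Q_j)}$) combined with a careful bookkeeping of the decomposition, forcing the iterative extraction to terminate after a single profile.

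With $m=1$ established, write $\zeta_n(\cdot+w_n)=\zeta_\infty+\eta_n$ with $\eta_n\rightharpoonup 0$ in $\tilde{Y}$ and $\sup_j|\eta_n|_{L^2(Q_j)}\to 0$. Proposition~\ref{prop:17/6} with $\zeta=\xi=\eta_n$ gives
\[
|\eta_n|_{L^3}^3\lesssim\sup_{j\in\Z^2}|\eta_n|_{L^2(Q_j)}^{\frac{1}{6}}|\eta_n|_{\tilde{Y}}^{\frac{17}{6}}\to 0,
\]
so $\int\eta_n^3\,\dx\dy\to 0$. Expanding the cube
\[
\int_{\R^2}\!\!\zeta_n^3\dx\dy-\int_{\R^2}\!\!\zeta_\infty^3\dx\dy=3\!\int_{\R^2}\!\!\zeta_\infty^2\eta_n\dx\dy+3\!\int_{\R^2}\!\!\zeta_\infty\eta_n^2\dx\dy+\int_{\R^2}\!\!\eta_n^3\dx\dy,
\]
the first term vanishes because $\eta_n\rightharpoonup 0$ in $L^3(\R^2)$ and $\zeta_\infty^2\in L^{3/2}(\R^2)$, while the middle term is controlled by $|\zeta_\infty|_{L^3}|\eta_n|_{L^3}^2\to 0$ via H\"older. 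This yields $\mathcal{S}(\zeta_n)\to\mathcal{S}(\zeta_\infty)$ and completes the plan.
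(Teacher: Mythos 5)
Your setup (the choice of $H_0$, $H_1$, the verification of hypotheses (i)--(iii), the identification of each profile as a nontrivial critical point via Lemma~\ref{lemma:critical points}(i)) matches the paper, and your final step --- deducing ${\mathcal S}(\zeta_n)\to{\mathcal S}(\zeta_\infty)$ from $m=1$ by expanding the cube and killing the residual with Proposition~\ref{prop:17/6} --- is a correct, slightly different packaging of what the paper does. But the step you yourself flag as ``the main obstacle'', ruling out $m\geq 2$, is exactly the content of the lemma, and your proposed resolution cannot work. No maximality condition on $\{w_n^1\}$ and no bookkeeping in the extraction can exclude dichotomy: a sequence of the form $\zeta_n=\zeta^1+\zeta^2(\cdot-v_n)$ with $|v_n|\to\infty$ and $\zeta^1,\zeta^2$ nontrivial critical points is a bona fide Palais--Smale sequence satisfying the nonvanishing hypothesis, yet it produces two profiles and the conclusion of the lemma fails for it. The exclusion of $m\geq2$ therefore has to come from the \emph{energy level} of the sequence, not from the combinatorics of the profile decomposition.

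The paper's mechanism is an energy-counting argument. Each profile $\zeta^\ell$ is a nontrivial critical point, hence lies on $N_\varepsilon$, and Remark~\ref{rem:inf is positive} gives $-{\mathcal S}(\zeta^\ell)\gtrsim c_\varepsilon-\bigO(\varepsilon^{\frac12})$ with $c_\varepsilon\gtrsim1$. Setting $\tilde\zeta_n=\sum_{\ell=1}^m\zeta^\ell(\cdot-w_n^\ell)$, one has ${\mathcal S}(\tilde\zeta_n)\to\sum_\ell{\mathcal S}(\zeta^\ell)$ because the translations separate, while $|{\mathcal S}(\zeta_n)-{\mathcal S}(\tilde\zeta_n)|\lesssim\varepsilon$ by Proposition~\ref{prop:17/6} combined with the $\ell^\infty(\Z^2,L^2(Q_j))$ residual bound from Theorem~\ref{thm:cc} (applied with $\Delta=\varepsilon^6$, so that the $\tfrac16$-power of the residual still beats the $\tilde Y$-norm factors). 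Hence $-\limsup_n{\mathcal S}(\zeta_n)\geq mc_\varepsilon-\bigO(\varepsilon^{\frac12})$. On the other hand, identity \eref{eq:only S} together with $\diff{\mathcal T}_\varepsilon[\zeta_n](\zeta_n)\to0$ bounds $-\limsup_n{\mathcal S}(\zeta_n)$ above in terms of $\limsup_n{\mathcal T}_\varepsilon(\zeta_n)$; for the minimising sequences to which the lemma is applied this yields $c_\varepsilon\geq mc_\varepsilon-\bigO(\varepsilon^{\frac12})$ and therefore $m=1$ for small $\varepsilon$. To repair your proof you must import this comparison of the level of $\{\zeta_n\}$ with $mc_\varepsilon$; without it the statement you are trying to prove is simply not reachable from the hypotheses you are using.
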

\proof Set
$H_1=\tilde{Y}({Q_0})$,
$H_0=L^2({Q_0})$,
define $x_n\in \ell^2(\Z^2,{ H_1})$ for $n \in \N$ by
\[
x_{n,j}=\zeta_n(\cdot+j)|_{{Q_0}}
\in \tilde{Y}({Q_0}), \qquad j\in \Z^2,
\]
and apply Theorem \ref{thm:cc} to the sequence $\{x_n\}\subset \ell^2(\Z^2,H_1)$,
noting that
\[
|x_n|_{\ell^2(\Z^2,H_1)}=|\zeta_n|_{\tilde{Y}}, \qquad
|x_n|_{\ell^\infty(\Z^2,H_0)}=\sup_{j\in\Z^2}|
\zeta_n
|_{L^2(Q_j)}
\]
for $n \in \N$. Assumption (ii) is satisfied because $\tilde{Y}$ is compactly embedded
in $L^2({Q_0})$, while assumptions (i) and (iii) follow from the hypotheses in the lemma.

The theorem
asserts the existence of a natural number $m$, sequences
$\{w_n^1\}, \ldots, \{w_n^m\} \subset \Z^2$ with
\[
\lim_{n \to \infty} |w_n^{m^{\prime\prime}}-w_n^{m^\prime}| { =} \infty, \qquad 1 \leq m^{\prime\prime} < m^\prime \leq m,
\]
and functions
$\zeta^1,\ldots,\zeta^m\in B_M(0)\setminus\{0\}$ such that
$\zeta_n(\cdot+w^{m^\prime}_n) \rightharpoonup \zeta^{m^\prime}$ in $\tilde{Y}$ as $n \to \infty$,
\[
\limsup_{n\to \infty}\sup_{j\in\Z^2}\left\|
\zeta_n-\sum_{\ell=1}^m\zeta^\ell(\cdot-w^\ell_n)
\right\|_{L^2(Q_j)}
\leq \varepsilon^6,
\]
\[
\sum_{\ell=1}^m\|\zeta^\ell\|_{\tilde{Y}}^2\leq
\limsup_{n\to \infty}\|\zeta_n\|_{\tilde{Y}}^2
\]
and
\begin{equation}
\label{eq:concentrate}
\lim_{n\to \infty}\sup_{j\in\Z^2}\left\|\zeta_n-\zeta^1(\cdot-w^1_n)
\right\|_{L^2(Q_j)}
=0
\end{equation}
if $m=1$. It follows from Lemma \ref{lemma:critical points}(i)
that $\mathrm{d}{\mathcal T}_\varepsilon[\zeta^\ell]=0$, so that
$\zeta^\ell \in N_\varepsilon$ and ${\mathcal T}_\varepsilon(\zeta^\ell) \geq c_\varepsilon { \gtrsim 1}$.

Define
\[\tilde{\zeta}_n=\sum_{\ell=1}^m\zeta^\ell(\cdot-w^\ell_n), \qquad n \in \N,\]
and note that
\begin{equation}
{\mathcal S}(\tilde{\zeta}_n) \to \sum_{\ell=1}^m {\mathcal S}(\zeta^\ell)
\label{eq:first S estimate}
\end{equation}
as $n \to \infty$ (approximate $\zeta^\ell \in L^3(\R^2)$ by a sequence of functions in $C_0^\infty(\R^2)$ and use the fact that $|w_n^{\ell_1} - w_n^{\ell_2}| \to \infty$).
Furthermore, { from} Proposition \ref{prop:17/6}, one finds that
\begin{eqnarray}
\afl \lefteqn{\qquad\quad\limsup_{n \to \infty} |{\mathcal S}(\zeta_n) - {\mathcal S}(\tilde{\zeta}_n)|}\nonumber\\
 & \lesssim & \limsup_{n \to \infty}  \int_{\R^2} |\zeta_n-\tilde{\zeta}_n|(|\zeta_n|^2+|\tilde{\zeta}_n|^2) \dx\dy \nonumber \\
 & \lesssim & { \limsup_{n \to \infty} \sup_{j \in \Z^2}} |\zeta_n-\tilde{\zeta}_n|_{L^2(Q_j)}^\frac{1}{6} \limsup_{n \to \infty} 
 |\zeta_n-\tilde{\zeta}_n|_{\tilde{Y}}^{\frac{5}{6}} (|\zeta_n|_{\tilde{Y}}^2 + |\tilde{\zeta}_n|_{\tilde{Y}}^2 )  \label{eq:intermediate S estimate} \\
 & \leq & \varepsilon \limsup_{n \to \infty} (|\zeta_n|_{\tilde Y}^2+|\tilde{\zeta}_n|_{\tilde Y}^2)^{\frac{17}{12}}
\nonumber \\
 & \lesssim & \varepsilon \label{eq:second S estimate}
\end{eqnarray}
uniformly in $m$.
Combining \eref{eq:first S estimate}, \eref{eq:second S estimate} and
\[
{ -} {\mathcal S}(\zeta^\ell) \geq c_\varepsilon-\bigO(\varepsilon^{\frac{1}{2}})|\zeta^{\ell}|^2_{\tilde{Y}}, \qquad \ell=1,\ldots,m,
\]
yields
\[
-  \limsup_{n\to \infty} {\mathcal S}(\zeta_n)
\geq m c_\varepsilon-\bigO(\varepsilon^{\frac{1}{2}})
\]
and hence
\[
c_\varepsilon\geq mc_\varepsilon-\bigO(\varepsilon^{\frac{1}{2}})
\]
uniformly in $m$ (because of \eref{eq:only S}). It follows that $m=1$ (recall that $\liminf_{\varepsilon\to 0}c_\varepsilon { \gtrsim 1}$).

The advertised result now follows from
\eref{eq:concentrate} (with $\zeta_\infty=\zeta^1$ and $w_n=w_n^1$) and
\eref{eq:intermediate S estimate} (since
${\mathcal S}(\tilde{\zeta}_1) = {\mathcal S}(\zeta^1)$).\qed

We can now strengthen Theorem \ref{thm:first existence theorem}, dealing with the cases
$\varepsilon=0$ and $\varepsilon>0$ separately.

\begin{lemma} \label{lemma:general convergence}
Suppose that $\{\zeta_n\} \subset B_{M-1}(0)$ satisfies
\[
\lim|\diff {\mathcal T}_0[\zeta_n]|_{\tilde Y \to \R} = 0,
\qquad
\sup_{j \in \Z^2} |\zeta_n|_{L^2(Q_j)} \gtrsim 1.\]
There exists $\{w_n\} \subset \Z^2$ such that
$\{\zeta_n(\cdot+w_n)\}$ converges strongly in $\tilde{Y}$ to a nontrivial critical point of ${\mathcal T}_0$.
\end{lemma}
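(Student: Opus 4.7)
The plan is to bootstrap the weak convergence from Lemma~\ref{lemma:application of cc} to strong convergence by means of the Hilbert-space criterion "weak convergence plus norm convergence implies strong convergence", using that ${\mathcal T}_0$ is a purely quadratic-plus-cubic functional so that norm convergence in $\tilde{Y}$ is controlled by convergence of ${\mathcal S}$.

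First, I would apply Lemma~\ref{lemma:application of cc} with $\varepsilon=0$ to produce $\{w_n\} \subset \Z^2$ and $\zeta_\infty \in \tilde{Y}$ such that $\tilde{\zeta}_n:=\zeta_n(\cdot+w_n) \rightharpoonup \zeta_\infty$ in $\tilde{Y}$ and ${\mathcal S}(\tilde{\zeta}_n) \to {\mathcal S}(\zeta_\infty)$ as $n \to \infty$. Lemma~\ref{lemma:critical points}(i) then identifies $\zeta_\infty$ as a nontrivial critical point of ${\mathcal T}_0$, so in particular $\diff {\mathcal T}_0[\zeta_\infty](\zeta_\infty)=0$, which together with the explicit formula $\diff {\mathcal T}_0[\zeta](\zeta)=|\zeta|_{\tilde Y}^2+3{\mathcal S}(\zeta)$ yields the identity $|\zeta_\infty|_{\tilde Y}^2=-3{\mathcal S}(\zeta_\infty)$.

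Next I would use the fact that $\{\tilde{\zeta}_n\}$ is bounded in $\tilde{Y}$ together with $|\diff {\mathcal T}_0[\tilde{\zeta}_n]|_{\tilde Y \to \R}\to 0$ (which is translation invariant) to conclude
\[
\diff {\mathcal T}_0[\tilde{\zeta}_n](\tilde{\zeta}_n)=|\tilde{\zeta}_n|_{\tilde Y}^2+3{\mathcal S}(\tilde{\zeta}_n) \longrightarrow 0.
\]
Combining this with ${\mathcal S}(\tilde{\zeta}_n) \to {\mathcal S}(\zeta_\infty)$ gives $|\tilde{\zeta}_n|_{\tilde Y}^2 \to -3{\mathcal S}(\zeta_\infty)=|\zeta_\infty|_{\tilde Y}^2$. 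Since $\tilde Y$ is a Hilbert space and $\tilde{\zeta}_n \rightharpoonup \zeta_\infty$, norm convergence implies $\tilde{\zeta}_n \to \zeta_\infty$ strongly in $\tilde Y$, as required. (Nontriviality of $\zeta_\infty$ is already built into Lemma~\ref{lemma:critical points}(i) via the nonvanishing hypothesis.)

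The only place where real work happens is the convergence ${\mathcal S}(\tilde{\zeta}_n) \to {\mathcal S}(\zeta_\infty)$, which is precisely what Lemma~\ref{lemma:application of cc} delivers; absent that lemma the obstacle would be that $\tilde{Y}$ embeds into $L^3$ only continuously, not compactly, so weak convergence alone does not force convergence of the cubic term. The concentration-compactness machinery rules out the splitting scenario (via the bound $c_0 \gtrsim 1$ and the $17/6$ estimate of Proposition~\ref{prop:17/6}) and leaves a single profile, after which the uniform-in-$j$ control on $|\tilde{\zeta}_n-\zeta_\infty|_{L^2(Q_j)}$ together with the $\tilde{Y}$-bound pushes $\tilde{\zeta}_n \to \zeta_\infty$ in $L^3(\R^2)$ and hence ${\mathcal S}(\tilde{\zeta}_n) \to {\mathcal S}(\zeta_\infty)$. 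Once this piece is in hand, the rest of the argument is the short Hilbert-space deduction above.
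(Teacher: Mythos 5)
Your proposal is correct and follows essentially the same route as the paper: apply Lemma~\ref{lemma:application of cc} to get weak convergence of translates together with ${\mathcal S}(\zeta_n(\cdot+w_n))\to{\mathcal S}(\zeta_\infty)$, use $\diff{\mathcal T}_0[\zeta_n](\zeta_n)=2{\mathcal Q}(\zeta_n)+3{\mathcal S}(\zeta_n)\to 0$ to deduce $|\zeta_n(\cdot+w_n)|_{\tilde Y}\to|\zeta_\infty|_{\tilde Y}$, and conclude strong convergence from the Hilbert-space criterion. Your explicit appeal to Lemma~\ref{lemma:critical points}(i) to justify $|\zeta_\infty|_{\tilde Y}^2=-3{\mathcal S}(\zeta_\infty)$ before invoking norm convergence is in fact slightly more careful than the paper's presentation, which leaves that step implicit.
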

\proof Lemma \ref{lemma:application of cc} asserts the existence of 
$\{w_n\} \subset \Z^2$ and $\zeta_\infty \neq 0$ such that
$\zeta_n(\cdot+w_n)
\rightharpoonup \zeta_\infty$ in $\tilde{Y}$ and  ${\mathcal S}(\zeta_n) \to
{\mathcal S}(\zeta_\infty)$ as $n \to \infty$. Abbreviating $\{\zeta_n(\cdot+w_n)\}$
to $\{\zeta_n\}$, we find from \eref{eq:dT} that
\[{\mathcal Q}(\zeta_n) = \tfrac{1}{2}\diff {\mathcal T}_0[\zeta_n](\zeta_n)  { -} \tfrac{3}{2}{\mathcal S}(\zeta_n)
\to  { -}\tfrac{3}{2}{\mathcal S}(\zeta_\infty) = {\mathcal Q}(\zeta_\infty),\]
that is, $|\zeta_n|_{\tilde{Y}}^2 \to |\zeta_\infty|_{\tilde Y}^2$ as $n \to \infty$.
It follows that $\zeta_n \to \zeta_\infty$ in $\tilde{Y}$ as $n \to \infty$ and
in particular that $\diff {\mathcal T}_0[\zeta_\infty] = 0.$\qed

{ We obtain the following existence result in the case $\varepsilon = 0$ as a direct corollary of Lemma~\ref{lemma:general convergence}.}

\begin{theorem} \label{thm:second existence result, epsilon zero}
Let $\{\zeta_n\} \subset B_{M-1}(0)$ be a minimising sequence for ${\mathcal T}_0|_{N_0}$
with
\[
\lim|\diff {\mathcal T}_0[\zeta_n]|_{\tilde Y \to \R} = 0.
\] 
There exists $\{w_n\} \subset \Z^2$ such that $\{\zeta_n(\cdot+w_n)\}$
converges strongly in $\tilde{Y}$ to a ground state of ${\mathcal T}_0$.
\end{theorem}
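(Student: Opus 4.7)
My plan is to deduce Theorem~\ref{thm:second existence result, epsilon zero} essentially as a corollary of the preceding machinery, the only real work being the verification that the hypotheses of Lemma~\ref{lemma:general convergence} apply and that the resulting limit is a \emph{ground state}, not merely a nontrivial critical point.

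First, I will verify the nonvanishing hypothesis. Since $\{\zeta_n\} \subset N_0$ by assumption, Corollary~\ref{cor:no vanishing} (applied with $\varepsilon=0$) gives $\sup_{j \in \Z^2} |\zeta_n|_{L^2(Q_j)} \gtrsim 1$. The remaining hypothesis $|\diff{\mathcal T}_0[\zeta_n]|_{\tilde Y \to \R} \to 0$ is given, and $\{\zeta_n\} \subset B_{M-1}(0)$ is also given. Lemma~\ref{lemma:general convergence} then supplies translates $\{w_n\} \subset \Z^2$ such that (up to a subsequence) $\zeta_n(\cdot+w_n) \to \zeta_\infty$ strongly in $\tilde Y$, where $\zeta_\infty$ is a nontrivial critical point of ${\mathcal T}_0$.

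Second, I need to identify $\zeta_\infty$ as a ground state. Because every nontrivial critical point of ${\mathcal T}_0$ lies on $N_0$ (from $\diff{\mathcal T}_0[\zeta_\infty](\zeta_\infty)=0$), we have $\zeta_\infty \in N_0$, so ${\mathcal T}_0(\zeta_\infty) \geq c_0$. Conversely, ${\mathcal T}_0$ is continuous on $\tilde Y$ (in fact smooth) and translation-invariant, so strong convergence combined with $\{\zeta_n\}$ being a minimising sequence yields
\[
{\mathcal T}_0(\zeta_\infty) = \lim_{n \to \infty} {\mathcal T}_0(\zeta_n(\cdot + w_n)) = \lim_{n \to \infty} {\mathcal T}_0(\zeta_n) = c_0.
\]
Hence $\zeta_\infty$ realises the infimum and is a ground state, and the full minimising sequence (after translation by $w_n$) converges strongly along the extracted subsequence.

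I do not anticipate a genuine obstacle here: the heavy lifting, namely preventing dichotomy/vanishing via the concentration-compactness argument and upgrading weak to strong convergence through the identity ${\mathcal Q}(\zeta_n) = \tfrac{1}{2} \diff{\mathcal T}_0[\zeta_n](\zeta_n) - \tfrac{3}{2}{\mathcal S}(\zeta_n)$, has already been absorbed into Lemma~\ref{lemma:general convergence}. The one point worth stating explicitly is that in the limit $\varepsilon=0$ the natural constraint set $N_0$ sits inside all of $\tilde Y$ (rather than a bounded ball), and the translation invariance of ${\mathcal T}_0$ on $\tilde Y$ is what makes the computation of ${\mathcal T}_0(\zeta_\infty)$ legitimate.
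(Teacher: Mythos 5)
Your proposal is correct and follows exactly the route the paper intends: the paper presents this theorem as a direct corollary of Lemma~\ref{lemma:general convergence}, with the nonvanishing hypothesis supplied by Corollary~\ref{cor:no vanishing} and the ground-state identification following from strong convergence, translation invariance of ${\mathcal T}_0$, and the fact that the nontrivial critical point $\zeta_\infty$ lies on $N_0$. Your write-up simply makes explicit the details the paper leaves implicit.
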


Let us now turn to the case $\varepsilon>0$. We begin with the following observation.

\begin{remark} \label{rem:unbounded sequences}
Suppose that $u_n \rightharpoonup u_\infty$ in { $H^s(\R^2)$} as $n \to \infty$. The limit
\[
\lim_{n\to \infty} |u_n-u_\infty|_\infty = 0
\]
holds if and only if $u_n(\cdot - j_n) \rightharpoonup 0$ in $H^s(\R^2)$ as $n \to \infty$ for all
unbounded sequences $\{j_n\} \subset \Z^2$.
\end{remark}
\begin{theorem}
{ Let $\varepsilon > 0$ and} $\{\zeta_n\} \subset B_{M-1}(0)$ be a minimising sequence for ${\mathcal T}_\varepsilon|_{N_\varepsilon}$
with
\[
\lim_{n \to \infty}|\diff {\mathcal T}_\varepsilon[\zeta_n]|_{\tilde Y \to \R}=0.
\] 
There exists $\{w_n\} \subset \Z^2$ such that $\{\zeta_n(\cdot+w_n)\}$
converges weakly in $\tilde{Y}_\varepsilon$ to a ground state $\zeta_\infty$ of ${\mathcal T}_\varepsilon$.
The corresponding sequence of
FDKP-solutions $\{u_n\}$, where
\[
u_n = u_1(\tilde{u}_1(\zeta_n)) + u_2(u_1(\tilde{u}_1(\zeta_n)))
\]
and we have abbreviated $\{\zeta_n(\cdot+w_n)\}$ to $\{\zeta_n\}$, converges weakly in
$X$ and strongly in $L^\infty(\R^2)$ to $u_\infty = u_1(\tilde{u}_1(\zeta_\infty)) + u_2(u_1(\tilde{u}_1(\zeta_\infty)))$
(which is a nontrivial critical point of ${\mathcal I}_\varepsilon$).
\end{theorem}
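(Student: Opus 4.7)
The plan is to mirror the proof of Theorem~\ref{thm:second existence result, epsilon zero} for the $\varepsilon=0$ case and to lift the resulting convergence from $\zeta$ to $u$ via the reduction of Section~\ref{sec:reduction}. First I would apply Lemma~\ref{lemma:application of cc} to the minimising sequence; after relabelling $\zeta_n(\cdot+w_n)$ as $\zeta_n$ this yields $\zeta_n\rightharpoonup\zeta_\infty$ in $\tilde Y$ with $\zeta_\infty\neq 0$, $\mathcal{S}(\zeta_n)\to\mathcal{S}(\zeta_\infty)$ and $\sup_{j\in\Z^2}|\zeta_n-\zeta_\infty|_{L^2(Q_j)}\to 0$. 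Because $\tilde Y_\varepsilon$ is a closed subspace of $\tilde Y$, this is the advertised weak convergence in $\tilde Y_\varepsilon$; Lemma~\ref{lemma:critical points}(i) then identifies $\zeta_\infty$ as a nontrivial critical point of $\mathcal{T}_\varepsilon$ (so that $\zeta_\infty\in N_\varepsilon$), and Proposition~\ref{prop:weak trace back} combined with Lemma~\ref{lemma:critical points}(ii) delivers $u_n\rightharpoonup u_\infty$ in $X$ with $u_\infty$ a nontrivial critical point of $\mathcal{I}_\varepsilon$.

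Next I would establish the ground-state property $\mathcal{T}_\varepsilon(\zeta_\infty)=c_\varepsilon$ by working at the $\mathcal{I}_\varepsilon$-level. Eliminating the quadratic part of $\mathcal{I}_\varepsilon$ gives
\[
\mathcal{I}_\varepsilon(u) = -\tfrac{1}{6}\int_{\R^2} u^3\dx\dy+\tfrac{1}{2}\diff\mathcal{I}_\varepsilon[u](u),
\]
so that the Palais--Smale property $\diff\mathcal{I}_\varepsilon[u_n]\to 0$ in $X^\ast$ (which follows from $\diff\mathcal{T}_\varepsilon[\zeta_n]\to 0$ via \eref{eq:I vs T}) and the critical-point identity $\diff\mathcal{I}_\varepsilon[u_\infty]=0$ reduce the claim $\mathcal{I}_\varepsilon(u_\infty)=\varepsilon^3 c_\varepsilon$ to the cubic convergence $\int u_n^3\to\int u_\infty^3$. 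The latter I would verify by writing $\int u_n^3-\int u_\infty^3=\int(u_n-u_\infty)(u_n^2+u_nu_\infty+u_\infty^2)\dx\dy$, bounding the integrand by the analogue of Proposition~\ref{prop:17/6} applied to $u$, and transferring the uniformly local decay $\sup_j|\zeta_n-\zeta_\infty|_{L^2(Q_j)}\to 0$ to $u_n - u_\infty$ through the KP scaling and the smooth dependence $u_1\mapsto u_2(u_1)$ furnished by Lemma~\ref{lemma:u_2}. Incidentally this step also yields $\mathcal{I}_\varepsilon(u_n)\to\mathcal{I}_\varepsilon(u_\infty)$.

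For strong convergence in $L^\infty(\R^2)$ I would bootstrap via the energy. The preceding paragraph supplies both $\mathcal{I}_\varepsilon(u_n)\to\mathcal{I}_\varepsilon(u_\infty)$ and $\int u_n^3\to\int u_\infty^3$, from which the quadratic part $\int(\varepsilon^2 u_n^2+(n(\Diff)^{1/2}u_n)^2)\dx\dy$ converges to its value at $u_\infty$. Since this positive-definite form is bounded below by $\varepsilon^2|\cdot|_{L^2}^2$ for $\varepsilon>0$, norm convergence in the form combined with the weak convergence (of $u_n$ in $X$, hence in $L^2$) yields strong $L^2(\R^2)$ convergence. Interpolating with the uniform $H^s$-bound from $X\hookrightarrow H^s$ and choosing $s'\in(\tfrac{3}{2},s)$ gives strong convergence in $H^{s'}(\R^2)$, and strong $L^\infty$ convergence follows from the Sobolev embedding $H^{s'}(\R^2)\hookrightarrow L^\infty(\R^2)$. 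The principal obstacle is the cubic convergence $\int u_n^3\to\int u_\infty^3$: the non-compactness of $\R^2$ is bridged by the concentration-compactness output of Lemma~\ref{lemma:application of cc} together with Proposition~\ref{prop:17/6}, and some care is required so that the passage $\zeta\mapsto u$ preserves the uniformly local tail control used there.
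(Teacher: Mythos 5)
Your skeleton (concentration--compactness via Lemma~\ref{lemma:application of cc}, identification of $\zeta_\infty$ as a nontrivial critical point, weak convergence of $u_n$ via Proposition~\ref{prop:weak trace back}, and reduction of the ground-state property to the convergence $\int u_n^3 \to \int u_\infty^3$ through the identity ${\mathcal I}_\varepsilon(u)=\tfrac{1}{2}\diff{\mathcal I}_\varepsilon[u](u)-\tfrac{1}{6}\int u^3$) matches the paper's strategy, and your energy argument for the $L^\infty$ convergence (norm convergence of the positive quadratic form plus weak convergence gives strong $L^2$ convergence, then interpolation with the uniform $H^s$ bound and Sobolev embedding) is a legitimate alternative to the paper's route --- it would in fact yield the slightly stronger conclusion of strong $L^2(\R^2)$ convergence. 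However, everything hinges on the step you defer to the end, and the mechanism you propose for it does not work.

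The gap is the transfer of the non-vanishing-tail information from $\zeta_n$ to $u_n$. You propose to push $\sup_{j}|\zeta_n-\zeta_\infty|_{L^2(Q_j)}\to 0$ through ``the KP scaling and the smooth dependence $u_1\mapsto u_2(u_1)$ furnished by Lemma~\ref{lemma:u_2}''. The smoothness in Lemma~\ref{lemma:u_2} gives Lipschitz control of $u_2(u_1)$ only in the \emph{global} norm of $X$, so exploiting it requires strong global convergence of $u_{1,n}$, which is precisely what you do not have (only weak convergence). Moreover the maps $\zeta\mapsto\tilde u_1\mapsto u_1$ involve the Fourier multipliers $\chi_\varepsilon(\Diff)$ and $(\tilde n/n)^{1/2}$, and $u_2(u_1)$ involves $n(\Diff)^{-1}$; bounded Fourier multipliers do not in general preserve uniformly-local smallness, so the decay $\sup_j|\cdot|_{L^2(Q_j)}\to 0$ does not pass through the reduction by any direct estimate. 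The paper's resolution is different: it first upgrades the uniformly-local $L^2$ decay of $\zeta_n-\zeta_\infty$ to $L^\infty(\R^2)$ decay by interpolating against the uniform $H^{2s}_\varepsilon$ bound (using that $\tilde Y_\varepsilon=H^{2s}_\varepsilon(\R^2)$ for $\varepsilon>0$), then invokes the equivalence of Remark~\ref{rem:unbounded sequences}: $|v_n-v_\infty|_\infty\to 0$ if and only if $v_n(\cdot-j_n)\rightharpoonup 0$ for every unbounded sequence $\{j_n\}\subset\Z^2$. Weak vanishing of unbounded translates \emph{is} preserved by the reduction, because the reduction commutes with integer translations and is weakly continuous (Proposition~\ref{prop:weak trace back}); applying Remark~\ref{rem:unbounded sequences} once more at the level of $u_n$ then gives $u_n\to u_\infty$ in $L^\infty(\R^2)$, from which the cubic convergence and the ground-state property follow. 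Without this (or an equivalent) device, your reduction of the ground-state claim to $\int u_n^3\to\int u_\infty^3$ remains unclosed.
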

\proof Lemma \ref{lemma:application of cc} asserts the existence of 
$\{w_n\} \subset \Z^2$ and $\zeta_\infty \neq 0$ such that
$\zeta_n(\cdot+w_n)
\rightharpoonup \zeta_\infty$ in $\tilde{Y}$ as $n \to \infty$ and
\[
\lim_{n \to \infty} \sup_{j \in \Z^2} |\zeta_n(\cdot+w_n)-\zeta_\infty|_{{ H^s(Q_j)}}=0,
\]
{ where we have estimated
\begin{eqnarray*}
\afl | \zeta_n(\cdot+w_n)-\zeta_\infty |_{H^s(Q_j)}^2 & \lesssim  | \zeta_n(\cdot+w_n)-\zeta_\infty |_{L^2(Q_j)} |\zeta_n(\cdot+w_n)-\zeta_\infty|_{H^{2s}(Q_j)}\\
&\lesssim | \zeta_n(\cdot+w_n)-\zeta_\infty |_{L^2(Q_j)}  |\zeta_n(\cdot+w_n)-\zeta_\infty|_{H^{2s}(\R^2)}\\
&\lesssim |\zeta_n(\cdot+w_n)-\zeta_\infty|_{L^2(Q_j)}
\end{eqnarray*}
because $\{\zeta_n(\cdot+w_n)-\zeta_\infty\}$ is bounded in $\tilde Y_\varepsilon$
(which coincides with $H_\varepsilon^{2s}(\R^2)$).
It follows that
\[
 \lim_{n \to \infty} \sup_{j \in \Z^2} |\zeta_n-\zeta_\infty|_{L^\infty(Q_j)} = \lim_{n \to \infty} |\zeta_n-\zeta_\infty|_\infty =0,
\]
where have again abbreviated $\{\zeta_n(\cdot+w_n)\}$ to $\{\zeta_n\}$,
and Remark \ref{rem:unbounded sequences} shows that
$\zeta_n(\cdot - j_n) \rightharpoonup 0$ in $ H^s_\varepsilon(\R^2)$
and hence in $\tilde{Y}_\varepsilon$ as $n \to \infty$ for all unbounded sequences $\{j_n\} \subset \Z^2$.

Using Proposition \ref{prop:weak trace back}, one finds that $u_n(\cdot - j_n) \rightharpoonup 0$ in $X$ and hence in $ H^s(\R^2)$
for all unbounded sequences $\{j_n\} \subset \Z^2$, so that $u_n \to u_\infty$ in
$L^\infty(\R^2)$ as $n \to \infty$ (Remark \ref{rem:unbounded sequences}). It follows that $u_n \to u_\infty$ in
$L^3(\R^2)$ and in particular that ${\mathcal S}(u_n) \to {\mathcal S}(u_\infty)$ as $n \to \infty$. Since
$\diff {\mathcal I}_\varepsilon[u_\infty]=0$ and $\diff {\mathcal I}_\varepsilon[u_n](u_n) \to 0$ as $n \to \infty$
(see the remarks below equation \eref{eq:I vs T}), one finds from the identity
\[{\mathcal I}_\varepsilon(u)=\tfrac{1}{2}\diff {\mathcal I}_\varepsilon[u](u) { - \tfrac{1}{2}} {\mathcal S}(u)\]
that ${\mathcal T}_\varepsilon(\zeta_n)
\to {\mathcal T}_\varepsilon(\zeta_\infty)$ as $n \to \infty$, so that ${\mathcal T}_\varepsilon(\zeta_\infty)=c_\varepsilon$.\qed

Finally, we show that critical points of ${\mathcal T}_\varepsilon$ converge to critical points of ${\mathcal T}_0$
as $\varepsilon \to 0$. The first step is to establish the corresponding convergence result for the infima of
these functionals over their natural constraint sets.

\begin{lemma} \label{lem:infima converge}
One has that $\lim_{\varepsilon \to 0} c_\varepsilon = c_0$.
\end{lemma}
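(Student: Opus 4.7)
\proof
The plan is to establish the inequalities $\limsup_{\varepsilon\to 0}c_\varepsilon \leq c_0$ and $\liminf_{\varepsilon \to 0}c_\varepsilon\geq c_0$ separately.

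For the upper bound I would invoke Theorem~\ref{thm:second existence result, epsilon zero} to obtain a ground state $\zeta^\star$ of ${\mathcal T}_0$ with ${\mathcal T}_0(\zeta^\star)=c_0$, noting that ${\mathcal S}(\zeta^\star) <0$ (see the calculation below \eref{eq:dT} at $\varepsilon=0$). Proposition~\ref{prop:approximate N0} then produces $\xi_\varepsilon\in N_\varepsilon$ with $\xi_\varepsilon\to \zeta^\star$ in $\tilde Y$ as $\varepsilon\to 0$. Using \eref{eq:final red func} together with the remainder estimate $|{\mathcal R}_\varepsilon(\xi_\varepsilon)|\lesssim |\xi_\varepsilon|_{\tilde Y}^2$, one finds
\[c_\varepsilon\leq {\mathcal T}_\varepsilon(\xi_\varepsilon)={\mathcal T}_0(\xi_\varepsilon)+\bigO(\varepsilon^{\frac{1}{2}})\to {\mathcal T}_0(\zeta^\star)=c_0,\]
which gives the required upper bound.

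For the lower bound pick a sequence $\varepsilon_n\to 0$ realising $\liminf_{\varepsilon\to 0}c_\varepsilon$ and, for each $n$, a ground state $\zeta^{\varepsilon_n}\in N_{\varepsilon_n}$ (available from the existence theory of the previous section) with ${\mathcal T}_{\varepsilon_n}(\zeta^{\varepsilon_n})=c_{\varepsilon_n}$. By Proposition~\ref{prop:lower bounds} and the already proved upper bound the sequence $\{\zeta^{\varepsilon_n}\}$ is bounded in $\tilde Y$ with $|\zeta^{\varepsilon_n}|_{\tilde Y}\gtrsim 1$, and Corollary~\ref{cor:no vanishing} yields the nonvanishing condition $\sup_{j\in\Z^2}|\zeta^{\varepsilon_n}|_{L^2(Q_j)}\gtrsim 1$. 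I would then rerun the argument of Lemma~\ref{lemma:application of cc} with the differential ${\mathrm d}{\mathcal T}_{\varepsilon_n}[\zeta^{\varepsilon_n}]=0$ (instead of ${\mathrm d}{\mathcal T}_\varepsilon[\zeta_n]\to 0$): because $\varepsilon_n^{\frac{1}{2}}{\mathcal R}_{\varepsilon_n}$ and its derivatives converge to zero uniformly on the bounded set in $\tilde Y$ containing $\{\zeta^{\varepsilon_n}\}$, the weak limit $\zeta^\star$ of a translated subsequence $\{\zeta^{\varepsilon_n}(\cdot+w_n)\}$ now satisfies ${\mathrm d}{\mathcal T}_0[\zeta^\star]=0$, so in particular $\zeta^\star\in N_0$ (nontriviality follows exactly as in the proof of Lemma~\ref{lemma:application of cc}, since $m=1$ is forced by $\liminf_{\varepsilon\to 0}c_\varepsilon\gtrsim 1$). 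The same adaptation yields ${\mathcal S}(\zeta^{\varepsilon_n}(\cdot+w_n))\to {\mathcal S}(\zeta^\star)$.

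Finally, applying \eref{eq:only S} at the critical point $\zeta^{\varepsilon_n}$ gives
\[c_{\varepsilon_n}={\mathcal T}_{\varepsilon_n}(\zeta^{\varepsilon_n})=-\tfrac{1}{2}{\mathcal S}(\zeta^{\varepsilon_n})+\bigO(\varepsilon_n^{\frac{1}{2}}),\]
whereas ${\mathcal T}_0(\zeta^\star)=-\tfrac{1}{2}{\mathcal S}(\zeta^\star)$ since ${\mathrm d}{\mathcal T}_0[\zeta^\star](\zeta^\star)=0$; combined with the ${\mathcal S}$-convergence above and the definition $c_0=\inf_{N_0}{\mathcal T}_0$ this yields
\[\liminf_{n\to \infty}c_{\varepsilon_n}=-\tfrac{1}{2}{\mathcal S}(\zeta^\star)={\mathcal T}_0(\zeta^\star)\geq c_0,\]
completing the proof. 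The principal obstacle is the concentration-compactness step: one has to verify that the analysis of Lemma~\ref{lemma:application of cc} carries over when the underlying functional itself depends on $n$ through $\varepsilon_n$, which reduces to checking that all perturbative remainders are uniformly negligible on the fixed ball $B_{M-1}(0)\subset \tilde Y$ — a property already established in Section~\ref{sec:reduction}.
\qed
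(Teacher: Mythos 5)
Your proposal is correct and follows essentially the same route as the paper: the upper bound via Proposition~\ref{prop:approximate N0} applied to a ground state of ${\mathcal T}_0$, and the lower bound via concentration--compactness applied to the ground states $\zeta^{\varepsilon_n}$. The one technicality you flag at the end (the functional varying with $n$) is resolved in the paper slightly more cleanly than by re-running Lemma~\ref{lemma:application of cc}: the uniform convergence of $\varepsilon^{\frac{1}{2}}{\mathcal R}_\varepsilon\circ\chi_\varepsilon(\Diff)$ shows that $\{\zeta^{\varepsilon_n}\}$ is a Palais--Smale sequence for the \emph{fixed} functional ${\mathcal T}_0$ (with nonvanishing supplied directly by Proposition~\ref{prop:17/6}), so that Lemma~\ref{lemma:general convergence} applies verbatim and even yields strong convergence in $\tilde{Y}$.
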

\proof
Let $\{\varepsilon_n\}$ be a sequence with $\lim_{n \to \infty} \varepsilon_n=0$ and
$\zeta^{\varepsilon_n}$, $\zeta^0$ be a ground states of respectively ${\mathcal T}_{\varepsilon_n}$
and ${\mathcal T}_0$.

Because $\varepsilon^\frac{1}{2}{\mathcal R}_\varepsilon\circ\chi_\varepsilon(D)$ and $\varepsilon^\frac{1}{2}\diff {\mathcal R}_\varepsilon\circ\chi_\varepsilon(D)$
converge uniformly to zero over $B_{M-1}(0) \subset \tilde{Y}$ as $\varepsilon \to 0$,
 we find that
\[{\mathcal T}_{\varepsilon_n}(\zeta^{\varepsilon_n})-{\mathcal T}_0(\zeta^{\varepsilon_n})=o(1),
\qquad
\diff{\mathcal T}_{\varepsilon_n}[\zeta^{\varepsilon_n}]-\diff{\mathcal T}_0[\zeta^{\varepsilon_n}]=o(1)
\]
as $n \to \infty$ and hence that
\[\lim_{n \to \infty}|\diff{\mathcal T}_0[\zeta^{\varepsilon_n}]|_{\tilde{Y} \to \R} =0.\]
Proposition \ref{prop:17/6} implies that
\[{\mathcal S}(\zeta^{\varepsilon_n}) \leq \int_{\R^2} |\zeta^{\varepsilon_n}||\zeta^{\varepsilon_n}|^2\dx\dy \lesssim
\sup_{j \in \Z^2} |\zeta|_{L^2(Q_j)}^\frac{1}{6} |\zeta|_{\tilde{Y}}^{\frac{17}{6}} \lesssim
\sup_{j \in \Z^2} |\zeta|_{L^2(Q_j)}^\frac{1}{6}\]
(because $|\zeta^{\varepsilon_n}|_{\tilde{Y}} < M$),
and combining this estimate with
$S(\zeta^{\varepsilon_n}) \geq c_\varepsilon-\bigO(\varepsilon^{\frac{1}{2}})$ and $\liminf_{\varepsilon \to 0} c_\varepsilon { \gtrsim 1}$
yields
\[\sup_{j \in \Z^2} |\zeta^{\varepsilon_n}|_{L^2(Q_j)} \gtrsim 1.\]
According to Lemma \ref{lemma:general convergence} there exists
$\{w_n\} \subset \Z^2$ and $\zeta^\star \in N_0$ such that $\diff{\mathcal T}_0[\zeta^\star]=0$ and
$ \zeta^{\varepsilon_n}(\cdot+w_n) \to \zeta^\star$ in $\tilde{Y}$ as $n \to \infty$.
It follows that
\begin{eqnarray}
c_0 & \leq & {\mathcal T}_0(\zeta^\star) \nonumber \\
& = & \lim_{n \to \infty} {\mathcal T}_0(\zeta^{\varepsilon_n}) \nonumber \\
& = & \lim_{n \to \infty} 
\big({\mathcal T}_0(\zeta^{\varepsilon_n})-{\mathcal T}_{\varepsilon_n}(\zeta^{\varepsilon_n})\big)
+ \lim_{n \to \infty} 
\big({\mathcal T}_{\varepsilon_n}(\zeta^{\varepsilon_n})-c_{\varepsilon_n}\big)
+ \liminf_{n \to \infty} c_{\varepsilon_n} \nonumber \\
& = &  \liminf_{n \to \infty} c_{\varepsilon_n}. \label{eq:inf est}
\end{eqnarray}

Proposition \ref{prop:approximate N0} (with $\lambda_0=1$ and $\zeta_0=\zeta^0$)
asserts the existence of $\xi_n \in N_{\varepsilon_n}$
with $\xi_n \to \zeta^0$ in $\tilde{Y}$ and hence ${\mathcal T}_0(\xi_n) \to
{\mathcal T}_0(\zeta^0)=c_0$ as $n \to \infty$.
Because $\varepsilon^\frac{1}{2}{\mathcal R}_\varepsilon\circ \chi_\varepsilon(D)$
converges uniformly to zero over $B_{M-1}(0) \subseteq \tilde{Y}$ as
$\varepsilon \to 0$, one finds that
\[{\mathcal T}_\varepsilon(\xi_n)-{\mathcal T}_0(\xi_n) = o(1)\]
as $n \to \infty$, whence
\begin{eqnarray}
\limsup_{n \to \infty} c_{\varepsilon_n}
& \leq & \limsup_{n \to \infty} {\mathcal T}_{\varepsilon_n}(\xi_n) \nonumber \\
& = & \lim_{n \to \infty}\big({\mathcal T}_{\varepsilon_n}(\xi_n) - {\mathcal T}_0(\xi_n)\big)
+ \lim_{n \to \infty}\big({\mathcal T}_0(\xi_n) - c_0\big) + c_0 \nonumber \\
& = & c_0. \label{eq:sup est}
\end{eqnarray}
The stated result follows from inequalities \eref{eq:inf est} and \eref{eq:sup est}.\qed

\begin{corollary}
Let $\{\varepsilon_n\}$ be a sequence with $\lim_{n \to \infty} \varepsilon_n=0$ and
$\zeta^{\varepsilon_n}$ be a ground state of ${\mathcal T}_{\varepsilon_n}$.
There exists
$\{w_n\} \subset \Z^2$ and a ground state $\zeta^\star$ of ${\mathcal T}_0$ such that { a subsequence of
$\{\zeta^{\varepsilon_n}(\cdot+w_n)\}_n$ converges to  $\zeta^\star$} in $\tilde{Y}$ as $n \to \infty$.
\end{corollary}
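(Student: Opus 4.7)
The plan is to observe that the bulk of the work has already been done in the proof of Lemma \ref{lem:infima converge}: we essentially need to extract the convergence conclusion from that argument and identify the limit as a ground state using the newly established fact that $c_{\varepsilon_n}\to c_0$.

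First I would note that, since each $\zeta^{\varepsilon_n}$ is a ground state of ${\mathcal T}_{\varepsilon_n}$, Proposition \ref{prop:lower bounds} yields $|\zeta^{\varepsilon_n}|_{\tilde{Y}}<M-1$. The uniform convergence of $\varepsilon^{\frac{1}{2}}{\mathcal R}_\varepsilon\circ\chi_\varepsilon(D)$ and its derivative to zero on $B_{M-1}(0)\subset\tilde{Y}$ then gives
\[
\lim_{n\to\infty}|\diff{\mathcal T}_0[\zeta^{\varepsilon_n}]|_{\tilde Y\to\R}
=\lim_{n\to\infty}|\diff{\mathcal T}_{\varepsilon_n}[\zeta^{\varepsilon_n}]-\diff{\mathcal T}_0[\zeta^{\varepsilon_n}]|_{\tilde Y\to\R}=0,
\]
so $\{\zeta^{\varepsilon_n}\}\subset B_{M-1}(0)$ is a Palais--Smale sequence for ${\mathcal T}_0$.

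Next I would verify the nonvanishing condition $\sup_{j\in\Z^2}|\zeta^{\varepsilon_n}|_{L^2(Q_j)}\gtrsim 1$ required by Lemma \ref{lemma:general convergence}. Combining Proposition \ref{prop:17/6} with $|\zeta^{\varepsilon_n}|_{\tilde Y}<M$ yields
\[
|{\mathcal S}(\zeta^{\varepsilon_n})|\lesssim\sup_{j\in\Z^2}|\zeta^{\varepsilon_n}|_{L^2(Q_j)}^{\frac{1}{6}},
\]
while Remark \ref{rem:inf is positive} together with Lemma \ref{lem:infima converge} gives $-{\mathcal S}(\zeta^{\varepsilon_n})\gtrsim c_{\varepsilon_n}-\bigO(\varepsilon_n^{\frac{1}{2}})\gtrsim 1$ for $n$ large. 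These two inequalities together force $\sup_{j\in\Z^2}|\zeta^{\varepsilon_n}|_{L^2(Q_j)}\gtrsim 1$.

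Having verified both hypotheses, Lemma \ref{lemma:general convergence} delivers $\{w_n\}\subset\Z^2$ and a nontrivial critical point $\zeta^\star$ of ${\mathcal T}_0$ such that (a subsequence of) $\{\zeta^{\varepsilon_n}(\cdot+w_n)\}$ converges strongly in $\tilde{Y}$ to $\zeta^\star$. It remains to see that $\zeta^\star$ is a \emph{ground state}. Since $\zeta^\star$ is a nontrivial critical point, $\zeta^\star\in N_0$ and hence ${\mathcal T}_0(\zeta^\star)\geq c_0$. For the reverse inequality, the continuity of ${\mathcal T}_0$ on $\tilde Y$, the uniform smallness of $\varepsilon_n^{\frac{1}{2}}{\mathcal R}_{\varepsilon_n}\circ\chi_{\varepsilon_n}(D)$ on $B_{M-1}(0)$, and Lemma \ref{lem:infima converge} give
\[
{\mathcal T}_0(\zeta^\star)
=\lim_{n\to\infty}{\mathcal T}_0(\zeta^{\varepsilon_n})
=\lim_{n\to\infty}\bigl({\mathcal T}_0(\zeta^{\varepsilon_n})-{\mathcal T}_{\varepsilon_n}(\zeta^{\varepsilon_n})\bigr)
+\lim_{n\to\infty}c_{\varepsilon_n}
=c_0,
\]
so ${\mathcal T}_0(\zeta^\star)=c_0$, completing the proof. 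The main (very mild) subtlety is the last identification step: one must invoke the already-established convergence $c_{\varepsilon_n}\to c_0$ to ensure the limit is a minimiser rather than merely a critical point; everything else is a direct packaging of earlier lemmas.
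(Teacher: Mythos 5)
Your proposal is correct and follows essentially the same route as the paper: the paper's proof simply says ``continuing the arguments in the proof of Lemma \ref{lem:infima converge}'' (which you have written out explicitly --- the Palais--Smale property for ${\mathcal T}_0$, the nonvanishing bound via Proposition \ref{prop:17/6}, and Lemma \ref{lemma:general convergence}) and then identifies ${\mathcal T}_0(\zeta^\star)=c_0$ exactly as you do, from ${\mathcal T}_{\varepsilon_n}(\zeta^{\varepsilon_n})-{\mathcal T}_0(\zeta^{\varepsilon_n})=o(1)$, $c_{\varepsilon_n}\to c_0$ and the strong convergence. The only cosmetic gap is that $|\zeta^{\varepsilon_n}|_{\tilde Y}<M-1$ requires noting $c_{\varepsilon_n}<\tfrac{1}{12}(M-1)^2$ before invoking Proposition \ref{prop:lower bounds}, which is supplied by the corollary to Proposition \ref{prop:approximate N0}.
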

\proof Continuing the arguments in the proof of Lemma \ref{lem:infima converge}, we find that it remains only
to show that ${\mathcal T}_0(\zeta^\star)=c_0$. This fact follows from the { calculations}
\[{\mathcal T}_{\varepsilon_n}(\zeta^{\varepsilon_n})-{\mathcal T}_0(\zeta^{\varepsilon_n})=o(1), \qquad {\mathcal T}_{\varepsilon_n}(\zeta^{\varepsilon_n}) = c_{\varepsilon_n} \to c_0\]
 and
$\zeta^{\varepsilon_n}(\cdot+w_n) \to \zeta^\star$ in $\tilde{Y}$ as $n \to \infty$.\qed

Finally, we record the corresponding result for FDKP solutions.

\begin{theorem} 
Let $\{\varepsilon_n\}$ be a sequence with $\lim_{n \to \infty} \varepsilon_n=0$
and $u^{\varepsilon_n}$ be a critical point of ${\mathcal I}_{\varepsilon_n}$
with ${\mathcal I}_{\varepsilon_n}(u^{\varepsilon_n})=\varepsilon_n^3c_{\varepsilon_n}$,
so that the formula $u^{\varepsilon_n}=u_1(\zeta^{\varepsilon_n}) + u_2(u_1(\zeta^{\varepsilon_n}))$
defines a ground state $\zeta^{\varepsilon_n}$ of ${\mathcal T}_\varepsilon$. There exists
$\{w_n\} \subset \Z^2$ and a ground state $\zeta^\star$ of ${\mathcal T}_0$ such that { a subsequence of
$\{\zeta^{\varepsilon_n}(\cdot+w_n)\}$ converges to} $\zeta^\star$ in $\tilde{Y}$ as $n \to \infty$.
\end{theorem}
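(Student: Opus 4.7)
The plan is to recognise that this statement is essentially a reformulation of the preceding corollary: the hypotheses on $u^{\varepsilon_n}$ ensure that $\zeta^{\varepsilon_n}$ is a ground state of $\mathcal{T}_{\varepsilon_n}$, so the already-established convergence of such ground states applies verbatim. There is therefore no new analytic content to produce; the work is simply to unwrap definitions.

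First I would verify that $\zeta^{\varepsilon_n}$ lies on the natural constraint set $N_{\varepsilon_n}$. Since $u^{\varepsilon_n}$ is a (nontrivial) critical point of $\mathcal{I}_{\varepsilon_n}$, the first identity in \eref{eq:I vs T} gives $\mathrm{d}\mathcal{T}_{\varepsilon_n}[\zeta^{\varepsilon_n}]=0$; testing with $\zeta^{\varepsilon_n}$ itself yields in particular $\mathrm{d}\mathcal{T}_{\varepsilon_n}[\zeta^{\varepsilon_n}](\zeta^{\varepsilon_n})=0$, and $\zeta^{\varepsilon_n}\neq 0$ because the reduction maps $\zeta\mapsto u_1(\tilde u_1(\zeta))+u_2(u_1(\tilde u_1(\zeta)))$ built from Propositions~\ref{prop:I1} and \ref{prop:I2} and Lemma~\ref{lemma:u_2} is injective and takes $0$ to $0$. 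Hence $\zeta^{\varepsilon_n}\in N_{\varepsilon_n}$.

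Next I would use the scaling relation ${\mathcal I}_\varepsilon(u)=\varepsilon^3{\mathcal T}_\varepsilon(\zeta)$ established at the end of Section~\ref{sec:reduction} to rewrite the level-set hypothesis as
\[
\mathcal{T}_{\varepsilon_n}(\zeta^{\varepsilon_n})= c_{\varepsilon_n}=\inf_{N_{\varepsilon_n}} \mathcal{T}_{\varepsilon_n}.
\]
Combined with the previous step this shows that $\zeta^{\varepsilon_n}$ is a ground state of $\mathcal{T}_{\varepsilon_n}$ in the sense defined earlier.

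Finally I would apply the preceding corollary directly to the sequence $\{\zeta^{\varepsilon_n}\}$: since $\lim_{n\to\infty}\varepsilon_n=0$ and each $\zeta^{\varepsilon_n}$ is a ground state of ${\mathcal T}_{\varepsilon_n}$, there exists $\{w_n\}\subset\Z^2$ and a ground state $\zeta^\star$ of ${\mathcal T}_0$ such that a subsequence of $\{\zeta^{\varepsilon_n}(\cdot+w_n)\}$ converges to $\zeta^\star$ strongly in $\tilde Y$, which is the assertion. The main ``obstacle'' is really not in this theorem at all but in the preceding corollary, whose proof rested on the concentration-compactness argument of Lemma~\ref{lemma:application of cc} and on the uniform smallness of $\varepsilon^{1/2}{\mathcal R}_\varepsilon\circ\chi_\varepsilon(D)$ on $B_{M-1}(0)\subset\tilde Y$; here one only needs to check that the definitional ``$u$-level'' input matches the ``$\zeta$-level'' hypothesis of that corollary, which is exactly what the variational reduction was set up to do.
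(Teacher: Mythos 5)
Your proposal is correct and matches the paper's treatment: the paper states this theorem without proof, precisely because (as you observe) the hypotheses already identify $\zeta^{\varepsilon_n}$ as a ground state of ${\mathcal T}_{\varepsilon_n}$ via the relation ${\mathcal I}_\varepsilon(u)=\varepsilon^3{\mathcal T}_\varepsilon(\zeta)$ and equation \eref{eq:I vs T}, so the preceding corollary applies verbatim. Your unwrapping of the definitions is exactly the intended argument.
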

\begin{remark}\label{remark:u-convergence}
Define
$u^\star_{\varepsilon}(x,y) = \varepsilon^2 \zeta^\star(\varepsilon x,\varepsilon^2 y)$,
so that $u^\star_\varepsilon$ is a KP solitary wave with wave speed $\varepsilon^2$ (see
the comments above equation \eref{eq:normalised steady KP}).
Abbreviating $u_1(\zeta^{\varepsilon}(\cdot+w_n))$,
$u_2(u_1(\zeta^{\varepsilon}(\cdot+w_n)))$ to $u_1^\varepsilon$, $u_2^\varepsilon$,
one finds that
the convergence $|\zeta^\star - \zeta^{\varepsilon_n}(\cdot+w_{\varepsilon_n})|_{\tilde{Y}} = o(1)$
translates to $| u^\star_{\varepsilon_n} - u^{\varepsilon_n}_1|_{\varepsilon_n} = o(\varepsilon_n^{\frac{1}{2}})$, and by Lemma~\ref{lemma:u_2}, $|u^{\varepsilon_n}_2|_{\varepsilon_n} \lesssim \varepsilon_n |u^{\varepsilon_n}_1|_{\varepsilon_n}^2 \lesssim \varepsilon_n^3$ is negligible in comparison. It follows that
$| u^\star_{\varepsilon_n} - u^{\varepsilon_n}|_{\varepsilon_n} = o(\varepsilon_n^{\frac{1}{2}})$,
while $|u^\star_{\varepsilon_n}|_{\varepsilon_n}$, $|u^{\varepsilon_n}|_{\varepsilon_n}$
are $O(\varepsilon_n^{\frac{1}{2}})$, so that the 
functions themselves are larger than their difference. Young's inequality also implies the convergence $|u^{\varepsilon_n} - u^\star_{\varepsilon_n}|_{H^{\frac{1}{2}}(\R^2)} = o(\varepsilon_n^{\frac{1}{2}})$. 
\end{remark}

\bibliographystyle{nonlinearity}
\bibliography{mdg}

\end{document}